\newtheorem{theorem}{Theorem}[section]
\newtheorem{proposition}[theorem]{Proposition}
\newtheorem{lemma}[theorem]{Lemma}
\newtheorem{corollary}[theorem]{Corollary}
\newtheorem{definition}[theorem]{Definition}
\newtheorem{remark}[theorem]{Remark}
\newtheorem{example}[theorem]{Example}
\newcommand{\C}{{\mathbf C}}
\newcommand{\R}{{\mathbf R}}
\newcommand{\Z}{{\mathbf Z}}
\newcommand{\sing}{{\rm {Sing}}}
\renewcommand{\int}{\rm Int}
\newcommand{\E}{\mathbb E}
\newcommand{\sys}{{\rm {sys}}}
\newcommand{\PP}{{\mathbb P}}
\begin{document}

\title{Geometry and topology of random 2-complexes}         
\author{A.E. Costa and M. Farber}        
\date{June 23, 2014}          
\maketitle

{\small {\bf Abstract:} We study random 2-dimensional complexes in the Linial - Meshulam model and prove that 
the fundamental group of a random 2-complex $Y$ has cohomological dimension $\le 2$ if
the probability parameter satisfies $p\ll n^{-3/5}$. Besides, for $n^{-3/5} \ll p\ll n^{-1/2-\epsilon}$ the fundamental group 
$\pi_1(Y)$ has elements of order two and is of infinite cohomological dimension. 
We also prove that for $p\ll n^{-1/2-\epsilon}$ 
 the fundamental group of a random 2-complex has no $m$-torsion, for any given odd prime $m\ge 3$. 
We find a simple algorithmically testable criterion for a subcomplex of a random 2-complex to be aspherical; this implies that (for $p\ll n^{-1/2-\epsilon}$) any aspherical  subcomplex of a random 2-complex satisfies the Whitehead conjecture. 
We use inequalities for Cheeger constants and systoles of simplicial surfaces to analyse spheres and projective planes lying in random 2-complexes. 
Our proofs exploit the uniform hyperbolicity property of random 2-complexes (Theorem \ref{hyp}). 

\section{Introduction and statements of the main results}

A model producing random simplicial complexes 
was recently suggested and studied by Linial and Meshulam ~\cite{LM} and further by 
Meshulam and Wallach ~\cite{MW}; it is a 
high-dimensional generalisation of one of the Erd\"os and R\'enyi models of random graphs  \cite{ER}.
In the Linial - Meshulam model one generates a random $d$-dimensional complex $Y$ by considering the full $d$-dimensional skeleton of the simplex 
$\Delta_n$ on vertices $\{1, \dots, n\}$ and retaining $d$-dimensional faces independently with probability $p$.  
The work of Linial--Meshulam and Meshulam--Wallach provides threshold functions for the vanishing of the $(d-1)$-st homology groups of random complexes with coefficients in a finite abelian group. Threshold functions for the vanishing of the $d$-th homology groups were subsequently studied by Kozlov \cite{Ko}. 


%

Significant progress in understanding the topology of random 2-complexes was made by Babson, Hoffman, and Kahle \cite{BHK} who 
investigated the fundamental groups of random 2-complexes.   
They showed that the fundamental group of a random 2-complex is either nontrivial and Gromov hyperbolic (for $p\ll n^{-1/2-\epsilon}$) or trivial (if $p^2 n -3\log n \to \infty$), a.a.s.\footnote{The symbol a.a.s. is an abbreviation of \lq\lq asymptotically almost surely\rq\rq, which means that the probability that the corresponding statement is valid tends to $1$ as $n\to \infty$.}  

In the paper \cite{CCFK} it was proven that a random 2-complex $Y$ collapses to a graph if $p\ll n^{-1}$ and therefore its fundamental group is free. 

The preprint \cite{ALLM} suggests an explicit constant $\gamma$ such that for any $c<\gamma$ a random $Y\in Y(n, c/n)$ is either collapsible to a graph or contains a  tetrahedron.

It is important to have a model producing random {\it aspherical} 2-dimensional complexes $Y$. 
A connected simplicial complex $Y$ is said to be aspherical if $\pi_i(Y)=0$ for all $i\ge 2$; this is equivalent to the requirement that the universal cover of $Y$ is contractible. 
For 2-dimensional complexes $Y$ the asphericity is equivalent to the vanishing of the second homotopy group $\pi_2(Y)=0$, or equivalently, that any continuous map 
$S^2\to Y$ is homotopic to a constant map. 
Random aspherical 2-complexes could be helpful for testing probabilistically the open problems of two-dimensional topology, such as the Whitehead conjecture. 
This conjecture stated by J.H.C. Whitehead in 1941 claims that a subcomplex of an aspherical 2-complex is also aspherical. 
Surveys of results related to the Whitehead conjecture can be found in \cite{B}, \cite{R}. 

The Linial-Meshulam model of random 2-complexes produces an aspherical complex $Y$ if $p\ll n^{-1}$, when $Y$ is homotopy equivalent to a graph, a.a.s.; 
however for 
$p\gg n^{-1}$ a random 2-complex is not aspherical since it contains a tetrahedron as a subcomplex, a.a.s. In \cite{CF} the authors 
studied {\it asphericable} 2-complexes, which can be made aspherical by deleting a few faces belonging to tetrahedra.  
The result of \cite{CF} states that random 2-complexes in the Linial - Meshulam model are asphericable in the range $p\ll n^{-46/47}$. 

In this paper we revisit the hyperbolicity theorem of Babson, Hoffman and Kahle \cite{BHK} having in mind two principal goals: firstly, we simplify and make more transparent the main ideas of the proof; secondly we observe that the arguments of \cite{BHK} give in fact a slightly stronger statement: a uniform lower bound on the isoperimetric constants of all subcomplexes of random complexes (see Theorem \ref{hyp} below and its full proof in \S \ref{app}).

The key role in this paper plays Theorem \ref{lm4} which gives a topological classification of minimal cycles $Z$ with the property $\mu(Z)>1/2$. 

Now we state the major results obtained in this paper. 
\vskip 0.3cm

\noindent{\bf Theorem A.} [See Corollary \ref{cor35} and Proposition \ref{remcomb}] {\it If $p\ll n^{-3/5}$ then the fundamental group $\pi_1(Y)$ of a random 2-complex $Y\in Y(n,p)$ has cohomological dimension at most 2, a.a.s. 
In particular, $\pi_1(Y)$ is torsion free, a.a.s. Moreover, if 
$\frac{c}{n}< p\ll n^{-3/5}$,
where $c>3$, then ${\rm {cd}}(\pi_1(Y))=2$, a.a.s. }
\vskip 0.3 cm
The following theorem states that 2-torsion appears in the fundamental group $\pi_1(Y)$ once the probability parameter $p$ crosses the threshold $n^{-3/5}$. 
\vskip 0.3 cm
\noindent
{\bf Theorem B.} [See Theorem \ref{ordertwo}] {\it If for some $0<\epsilon <0.1$ the probability parameter $p$ satisfies 
$$ n^{-3/5}\ll p\ll n^{-1/2 -\epsilon}$$
then the fundamental group $\pi_1(Y)$ of a random 2-complex $Y\in Y(n,p)$ has nontrivial elements of order two and consequently ${\rm {cd}}(\pi_1(Y))=\infty$, a.a.s. }

\vskip 0.3cm
The next result complements Theorem B. \vskip 0.3cm

\noindent
{\bf Theorem C.} [See Theorem \ref{orderthree}] {\it Let $m\ge 3$ be an odd prime. If for some $\epsilon>0$ the probability parameter $p$ satisfies 
$p\ll n^{-1/2 -\epsilon}$ then, with probability tending to one as $n\to \infty$, a random 2-complex $Y\in Y(n,p)$ has the following property:
the fundamental group $\pi_1(Y')$ of any subcomplex $Y'\subset Y$ has no $m$-torsion. }

\vskip 0.3cm
The next statement describes aspherical subcomplexes of random 2-complexes. It is a strengthening of the main result of \cite{CF}. 
\vskip 0.3cm
\noindent{\bf Theorem D.} [See Theorem \ref{thmasph}] {\it Assume that 
$p\ll n^{-1/2-\epsilon}$
for a fixed $\epsilon >0$. Then a random 2-complex $Y\in Y(n,p)$ has the following property with probability tending to one as $n\to \infty$:
any subcomplex $Y'\subset Y$ is aspherical if and only if it contains no subcomplexes $S\subset Y'$ with at most $2\epsilon^{-1}$ faces which are homeomorphic either to 
the sphere $S^2$, the projective plane ${\mathbf {RP}}^2$ or the complexes $Z_2, Z_3$ shown on Figure \ref{fig6}. 
}
\vskip 0.3cm

Theorem D implies the following result which can be viewed as probabilistic confirmation of the Whitehead Conjecture. 
\vskip 0.3 cm

\noindent{\bf Corollary E.} [The Whitehead Conjecture] {\it Assume that 
$p\ll n^{-1/2-\epsilon}$ 
for a fixed $\epsilon >0$. Then a random 2-complex $Y\in Y(n,p)$ has the following property with probability tending to one as $n\to \infty$:
any aspherical subcomplex $Y'\subset Y$ satisfies the Whitehead Conjecture, i.e. any subcomplex $Y''\subset Y'$ is also aspherical. }

\vskip 0.3 cm
Proposition \ref{remcomb} describes the second Betti number of random aspherical 2-complexes obtained from a random 2-complex $Y\in Y(n,p)$ by deleting a sequence of faces. It follows that these complexes have non-free fundamental groups. 

\vskip 0.3 cm

The following corollary states that a random 2-complex in the range $p\ll n^{-1/2-\epsilon}$ contains no tori and no large spheres and projective planes.
\vskip 0.3 cm
\noindent{\bf Corollary F.} [see Corollaries \ref{cortorus}, \ref{corsphere}, \ref{noprojplanes}] {\it Suppose that $p\ll n^{-1/2-\epsilon}$, where $\epsilon>0$ is fixed. Then: 
\begin{enumerate}
  \item[(a)] A random 2-complex $Y\in Y(n,p)$ contains no subcomplex 
homeomorphic to the torus $T^2$, a.a.s.
  \item[(b)] A random 2-complex $Y\in Y(n,p)$ contains no subcomplex with more than $2\epsilon^{-1}$ faces which is homeomorphic to the sphere $S^2$, a.a.s. 
  \item[(c)] A random 2-complex $Y\in Y(n,p)$ contains no subcomplex with more than $\epsilon^{-1}$ faces which is homeomorphic to the real projective plane ${\mathbf {RP}}^2$, a.a.s. 
\end{enumerate}  
}

Note that under the assumptions of Corollary F  (as follows from the results of \cite{CCFK})
any triangulated sphere with $\le 2\epsilon^{-1}$ faces is contained in a random 2-complex as a subcomplex and, besides, any triangulated real projective plane with $\le \epsilon^{-1}$ faces is contained in a random 2-complex, a.a.s. Thus Corollary F lists all spheres and projective planes which cannot be found as subcomplexes of random complexes with probability tending to one. 

We know that for $p=n^\alpha$ with $\alpha > -1/2$ a random 2-complex $Y\in Y(n,p)$ is simply connected and hence it has homotopy type of a wedge of 2-spheres, see \cite{BHK}. Thus, a significant change in geometry and topology of random 2-complexes happens around $p=n^{-1/2}$, and an outstanding challenge for the research field is to analyse this event in more detail. 

Figure \ref{figcd} illustrates the behaviour of the cohomological dimension of the fundamental group $\pi_1(Y)$ of a random 2-complex $Y\in Y(n,p)$. 
For simplicity we assume here that $p=n^\alpha$ where $\alpha$ is a real parameter, $\alpha\in (-\infty, 0)$. 
\begin{figure}[h]
\centering
\includegraphics[width=0.8\textwidth]{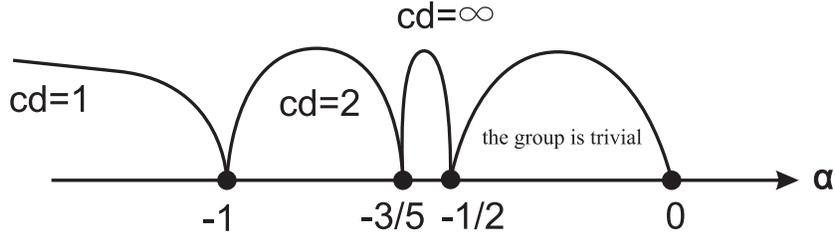}
\caption{The cohomological dimension ${\rm cd} ={\rm cd}(\pi_1(Y))$ of the fundamental group of a random 2-complex $Y\in Y(n,p)$ where $p=n^\alpha$.}\label{figcd}
\end{figure}
The behaviour of the torsion in the fundamental group $\pi_1(Y)$ of a random 2-complex $Y\in Y(n,p)$ 
is illustrated by Figure \ref{2torsion-diagram}. Again, we assume here that $p=n^\alpha$, where $\alpha\in (-\infty, 0)$. We also know that $\pi_1(Y)$ has no $m$-torsion, 
where $m$ is an odd prime, for $p=n^\alpha$ with $\alpha\not= -1/2$. 
\begin{figure}[h]
\centering
\includegraphics[width=0.8\textwidth]{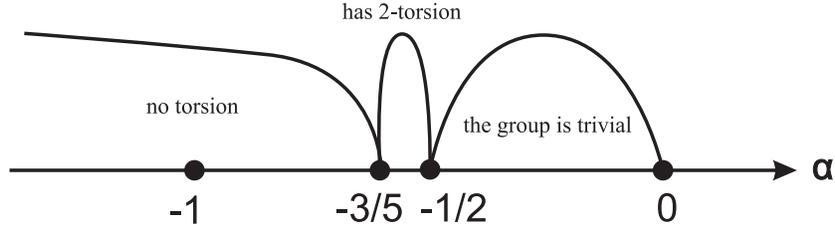}
\caption{Torsion in the fundamental group of a random 2-complex $Y\in Y(n,p)$ where $p=n^\alpha$.}\label{2torsion-diagram}
\end{figure}

A few words about terminology we use in this paper.  
By a {\it 2-complex} we understand a finite simplicial complex $Z$ of dimension $\le 2$. 
The $i$-dimensional simplexes of a 2-complex are called {\it vertices} (for $i=0$), {\it edges} (for $i=1$) and {\it faces} (for $i=2$). 
A 2-complex is said to be {\it pure} if any vertex and any edge are incident to a face. For an edge $e\subset Z$ we denote by $\deg e=\deg_Z(e)$ the degree of $e$, i.e. the number of faces of $Z$ containing $e$. The union of edges of degree one is denoted $\partial Z$ and is called the {\it boundary} of $Z$. 
We say that a 2-complex $Z$ is {\it closed} if $\partial Z=\emptyset$. 

We use the notations $P^2$ and ${\mathbf {RP}}^2$ for the real projective plane. 

For a 2-complex $X$ we denote by $\mu(X)$ the ratio $v/f$ where $v=v(X)$ is the number of vertices  and $f = f(X)$ is the number of faces in $X$. 
The symbol $\tilde \mu(X)$ denotes 
$$\tilde \mu(X) = \min\mu(S)$$ where $S\subset X$ runs over all subcomplexes. More information about the properties of the invariants $\mu(X)$ and $\tilde \mu(X)$ and their relevance to the containment problem for random complexes can be found in 
\cite{BHK}, \cite{CCFK}. For convenience of the reader we shall state here one of the results which will be used frequently in the paper, compare 
Theorem 15 in \cite{CCFK}. 

\begin{lemma} \label{containment} Let $\mathcal F$ be a finite collection of finite simplicial complexes. 
Denote $\alpha= \min\{ \tilde \mu(X), X\in \mathcal F\}$, and 
$\beta= \max\{ \tilde \mu(X), X\in \mathcal F\}$. 
Let $A_n^{\mathcal F}\subset Y(n,p)$ denote the set of complexes 
$Y\in Y(n,p)$ which contain each of the complexes $X\in \mathcal F$ as a simplicial subcomplex. 
Let $B_n^{\mathcal F}\subset Y(n,p)$ denote the set of complexes 
$Y\in Y(n,p)$ which contain at least one of the complexes $X\in \mathcal F$ as a simplicial subcomplex. Clearly $A_n^{\mathcal F} 
\subset B_n^{\mathcal F}$. Then one has:

(1) If $p\ll n^{-\beta}$ then $\mathbb P(B_n^{\mathcal F}) \to 0$ as $n\to \infty$;

(2) If $p\gg n^{-\alpha}$ then $\mathbb P(A_n^{\mathcal F}) \to 1$ as $n\to \infty$.
\end{lemma}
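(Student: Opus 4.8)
The plan is to run the classical first- and second-moment argument for subcomplex containment — the simplicial analogue of Bollob\'as's theorem on the appearance of a fixed subgraph in $G(n,p)$ — the only new point being that the governing density exponent is $\tilde\mu(X)$ rather than $\mu(X)$. Since $\mathcal F$ is finite, I would first reduce both parts to a single complex by a union bound: $\mathbb P(B_n^{\mathcal F})\le\sum_{X\in\mathcal F}\mathbb P(X\subseteq Y)$, where $\tilde\mu(X)\le\beta$ for every $X$, and $1-\mathbb P(A_n^{\mathcal F})\le\sum_{X\in\mathcal F}\mathbb P(X\not\subseteq Y)$, where $\tilde\mu(X)\ge\alpha$ for every $X$. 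So it suffices to show: (i) if $p\ll n^{-\tilde\mu(X)}$ then $\mathbb P(X\subseteq Y)\to 0$, and (ii) if $p\gg n^{-\tilde\mu(X)}$ then $\mathbb P(X\subseteq Y)\to 1$. Throughout one uses that $Y(n,p)$ has the complete $1$-skeleton, so a copy of a complex $W$ inside $Y$ is specified by an injection $V(W)\hookrightarrow\{1,\dots,n\}$ together with the event — of probability $p^{f(W)}$ — that the corresponding $f(W)$ faces survive.

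For (i), I would pass to a subcomplex $S\subseteq X$ with $f(S)\ge 1$ minimising $\mu$, so that $\mu(S)=\tilde\mu(X)$ (such $S$ has no isolated vertices, since deleting one would strictly decrease the ratio). A copy of $X$ in $Y$ contains a copy of $S$, hence $\mathbb P(X\subseteq Y)\le\mathbb P(S\subseteq Y)$, and by linearity of expectation the expected number of copies of $S$ in $Y$ is at most $n^{v(S)}p^{f(S)}$. Since $\mu(S)=\tilde\mu(X)\le\beta$, the hypothesis $p\ll n^{-\beta}\le n^{-\mu(S)}$ forces $n^{v(S)}p^{f(S)}\to 0$, and Markov's inequality finishes (i). It should be stressed that applying the first moment to $X$ itself would only yield the weaker threshold $n^{-\mu(X)}$; passing to the densest subcomplex is essential here.

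For (ii), let $N$ be the number of subcomplexes of $Y$ isomorphic to $X$, so $\mathbb E[N]=\Theta\!\big(n^{v(X)}p^{f(X)}\big)$; as $\tilde\mu(X)\le\mu(X)$, the hypothesis forces $p\gg n^{-\mu(X)}$, hence $\mathbb E[N]\to\infty$. I would bound $\mathbb E[N^2]$ by expanding over ordered pairs of copies $(X_1,X_2)$ and grouping by the isomorphism type of the overlap $S=X_1\cap X_2$. Pairs with $f(S)=0$ have disjoint face-sets, so the two containment events are independent and these pairs contribute at most $\mathbb E[N]^2$; for each subcomplex $S\subseteq X$ with $f(S)\ge 1$ there are $O\!\big(n^{2v(X)-v(S)}\big)$ pairs meeting in a copy of $S$, each of probability $p^{2f(X)-f(S)}$, contributing $O\!\big(n^{-v(S)}p^{-f(S)}\big)\cdot\mathbb E[N]^2$ in total. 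The decisive estimate is that $n^{v(S)}p^{f(S)}\to\infty$ for every such $S$: writing $p=\omega(n)\,n^{-\tilde\mu(X)}$ with $\omega(n)\to\infty$ and using $\tilde\mu(X)\le\mu(S)=v(S)/f(S)$, one gets $n^{v(S)}p^{f(S)}=\omega(n)^{f(S)}\,n^{\,v(S)-\tilde\mu(X)f(S)}\ge\omega(n)^{f(S)}\to\infty$. As $X$ has only finitely many isomorphism types of subcomplexes, summing gives $\mathbb E[N^2]=(1+o(1))\mathbb E[N]^2$, so $\mathrm{Var}(N)=o\!\big(\mathbb E[N]^2\big)$, and Chebyshev's inequality yields $\mathbb P(N=0)\to 0$, i.e.\ $\mathbb P(X\subseteq Y)\to 1$.

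The main obstacle is the bookkeeping in the second moment of (ii): one must carefully enumerate the ordered pairs of copies of $X$ according to their overlap $S$ — including the finitely many ways two copies of $X$ can be glued along a fixed $S$ — and verify that the only contribution of order $\mathbb E[N]^2$ comes from face-disjoint pairs, every other overlap type being suppressed by a factor $n^{-v(S)}p^{-f(S)}\to 0$, which is exactly the statement that $\tilde\mu(X)$ is a minimum over \emph{all} subcomplexes. Degenerate cases — a complex with no faces, or a $\mu$-minimiser with isolated vertices — cause no trouble: the former is excluded by convention and the latter is ruled out as above.
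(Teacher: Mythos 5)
Your proof is correct and is exactly the standard first-moment/second-moment argument (pass to the $\mu$-minimising subcomplex for the $0$-statement; Chebyshev with overlaps graded by $n^{-v(S)}p^{-f(S)}$ for the $1$-statement). The paper does not prove this lemma itself but defers to Theorem 15 of \cite{CCFK}, and your argument is precisely the one underlying that reference, with the key point — that $\tilde\mu$ rather than $\mu$ governs both directions because every overlap $S$ satisfies $\mu(S)\ge\tilde\mu(X)$ — handled correctly.
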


In other words, (1) states that for small $p$ none of the complexes $X\in \mathcal F$ is embeddable into a random 2-complex, a.a.s.; statement (2) states that for large $p$ 
any complex $X\in \mathcal F$ is embeddable into a random 2-complex, a.a.s.


%

\section{Topology of minimal cycles} 

In this section we classify topologically the minimal cycles satisfying the condition 
$\mu(Z)>1/2$; this result will be important for our study of aspherical subcomplexes and the Whitehead conjecture in \S \ref{secwh}. 
 
If $v\in Z$ is a vertex of a 2-complex $Z$ then the link of $v$ is a graph; we denote by $\sing(v)=\sing_Z(v)$ the number of connected components of the link of $v$ minus one.

Let $Z$ be a finite pure 2-complex with the set of vertices $V(Z)$. The closure of a connected component of the complement $Z-V(Z)$ 
is called {\it a strongly connected component} of $Z$. A finite pure complex $Z$ is called {\it strongly connected} if there is a unique strongly connected component. 
Note that if $Z$ is path-connected and $\sing(v)=0$ for any vertex $v\in V(Z)$ then $Z$ is strongly connected.

Given a pure 2-complex $Z$, there is a canonically defined 2-complex $\tilde Z$ 
with a natural surjective map $\tilde Z \to Z$, which is 1-1 on $\tilde Z - V(\tilde Z)$, such that $\sing_{\tilde Z}(w)=0$ for any vertex $w$ of $\tilde Z$. 
The complex $\tilde Z$ is obtained by multiplying every vertex of $Z$ as many times as there are connected components in the link of this vertex 
(in other words, we cut open any vertex $v$ with $\sing(v)>0$ along the cut point). 
Clearly, $\tilde Z$ is strongly connected if $Z$ is strongly connected. 
In this case one observes that $Z$ is homotopy equivalent to the wedge sum 
$\tilde Z \vee S^1 \vee S^1 \vee \dots \vee S^1,$
where the number of $S^1$ summands equals  $\sum_{v\in V(Z)}\sing(v).$
This gives the following corollary:

\begin{corollary}\label{cor1} For any pure strongly connected 2-complex 
$Z$ one has $$b_1(Z) = b_1(\tilde Z) + \sum_{v\in V(Z)} \sing(v)\ge \sum_{v\in V(Z)} \sing(v).$$ 
In particular, if $Z$ has at least one vertex with $\sing(v)\ge 1$ then $b_1(Z) \ge 1$. 
\end{corollary}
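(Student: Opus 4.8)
The plan is to read off the statement from the homotopy equivalence $Z\simeq \tilde Z\vee S^1\vee\dots\vee S^1$ recorded just above, where the number of circles is $N:=\sum_{v\in V(Z)}\sing(v)$, together with additivity of Betti numbers over wedge sums. The one point that deserves care is establishing that homotopy equivalence, so I would do that first. Since $Z$ is pure and strongly connected, $\tilde Z$ is connected, and the canonical map $q\colon \tilde Z\to Z$ is a homeomorphism off the vertices while collapsing, for each $v\in V(Z)$, the fibre $q^{-1}(v)$ of $\sing(v)+1$ points to the single point $v$. I would form an auxiliary CW complex $\tilde Z'\supset\tilde Z$ by attaching, for every $v$ with $\sing(v)\ge 1$, a star $T_v$: a new central vertex joined by an edge to each of the $\sing(v)+1$ points of $q^{-1}(v)$. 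Collapsing the contractible subcomplexes $T_v$ one at a time is a composition of homotopy equivalences whose net effect is exactly to identify the fibres $q^{-1}(v)$, so $Z$ is homotopy equivalent to $\tilde Z'$. On the other hand, attaching a star to a connected complex at $k+1$ points is, up to homotopy, the same as wedging on $k$ circles --- one edge of the star is absorbed by a deformation retraction back onto $\tilde Z$, and each of the remaining $k$ edges glues on a circle because $\tilde Z$ is connected. Summing over $v$ gives $\tilde Z'\simeq\tilde Z\vee\bigvee_N S^1$, and hence $Z\simeq\tilde Z\vee\bigvee_N S^1$.

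Given this, the rest is immediate. Reduced rational homology sends wedge sums to direct sums, so
\[
b_1(Z)=b_1(\tilde Z)+N\cdot b_1(S^1)=b_1(\tilde Z)+\sum_{v\in V(Z)}\sing(v).
\]
Since $b_1(\tilde Z)\ge 0$, this gives $b_1(Z)\ge\sum_{v\in V(Z)}\sing(v)$, which is the asserted inequality. For the final clause, if some vertex satisfies $\sing(v)\ge 1$ then $\sum_{v}\sing(v)\ge 1$, whence $b_1(Z)\ge 1$.

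The only genuinely nontrivial step is the first one: verifying that cutting $Z$ open along its singular vertices and then re-gluing changes the homotopy type by precisely $\bigvee_N S^1$ and by nothing else; everything after that (additivity of $b_1$, contractibility of the stars $T_v$, $b_1(S^1)=1$, $b_1(\tilde Z)\ge 0$) is routine bookkeeping. If one wished to avoid the star construction, an alternative is induction on $N$: choose a vertex $v$ with $\sing(v)\ge 1$, separate two connected components of its link to split off a single loop, and compare $Z$ with the resulting complex (which has strictly smaller $\sum_v\sing(v)$) via van Kampen or a Mayer--Vietoris sequence; but the wedge description above is cleaner and simultaneously records the stronger homotopy statement that is used implicitly elsewhere.
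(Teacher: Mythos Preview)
Your proposal is correct and follows essentially the same approach as the paper: the paper records the homotopy equivalence $Z\simeq \tilde Z\vee S^1\vee\cdots\vee S^1$ (with $\sum_v\sing(v)$ circle summands) immediately before the corollary and simply says ``this gives the following corollary'', leaving both the justification of the wedge decomposition and the Betti-number computation to the reader. You supply exactly those details, and your star-attachment argument for the homotopy equivalence is a clean way to make precise what the paper asserts without proof.
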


\begin{definition}\label{def1} A finite pure 2-complex $Z$ is said to be a minimal cycle if $b_2(Z)=1$ and for any proper subcomplex $Z'\subset Z$ one has $b_2(Z')=0$. 
\end{definition}

Clearly a minimal cycle must be closed (i.e. $\partial Z=\emptyset$) and strongly connected. Indeed, if $Z_1, \dots, Z_k$ are strongly connected components of $Z$, $k\ge 2$, then 
$b_2(Z) = \sum b_2(Z_i)$ and hence exactly one component $Z_i$ has the second Betti number 1. Removing a face from another component $Z_j$, $j\not=i$, will not affect the second Betti number, which gives a contradiction.  

The following easy statement will be useful later.

\begin{lemma}\label{lemmanew} Let $\phi: Z_1 \to Z_2$ be a simplicial map between finite 2-complexes such that $b_2(Z_1)\ge 1$ and $Z_2$ is a minimal cycle. Suppose that $\phi$ maps bijectively the set of faces of $Z_1$ onto the set of faces of $Z_2$ (and in particular, faces of $Z_1$ do not degenerate under $\phi$). Then $Z_1$ is also a minimal cycle; in particular $Z_1$ is strongly connected. 
\end{lemma}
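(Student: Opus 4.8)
The plan is to transport the defining properties of a minimal cycle back along $\phi$ at the level of chain complexes, working with $\mathbb{F}_2$-coefficients (the argument over $\mathbb{Q}$ is identical, reading ``occurs in'' as ``has a nonzero coefficient in''). Since $\phi$ carries the set of $2$-faces of $Z_1$ bijectively and without degeneration onto the set of $2$-faces of $Z_2$, the induced chain map $\phi_\#\colon C_2(Z_1;\mathbb{F}_2)\to C_2(Z_2;\mathbb{F}_2)$ takes a basis bijectively to a basis and is therefore a linear isomorphism, while $\partial_2\phi_\#=\phi_\#\partial_2$. As neither complex has $3$-cells, $H_2(Z_i;\mathbb{F}_2)=\ker\partial_2$ inside $C_2(Z_i;\mathbb{F}_2)$, so $\phi_\#$ restricts to an injection $\ker\partial_2(Z_1)\hookrightarrow\ker\partial_2(Z_2)$. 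Because $Z_2$ is a minimal cycle, $\dim\ker\partial_2(Z_2)=b_2(Z_2)=1$; combined with the hypothesis $b_2(Z_1)\ge 1$ this forces $b_2(Z_1)=1$ and upgrades the injection above to an isomorphism.

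The crucial observation is a \emph{full support} property: if the nonzero class $z\in\ker\partial_2(Z_2)$ spans that line, then every $2$-face of $Z_2$ occurs in $z$. Indeed, if some face $\tau$ were missing from $z$, then $z$ would be a nonzero $2$-cycle supported on the proper subcomplex of $Z_2$ obtained by removing the open face $\tau$, contradicting the minimality of $Z_2$. Now let $z_1=\phi_\#^{-1}(z)$ span $\ker\partial_2(Z_1)$; since $\phi_\#$ merely permutes the basis of $2$-faces (up to sign over $\mathbb{Q}$), $z_1$ likewise involves every $2$-face of $Z_1$.

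Finally one checks that $Z_1$ is a minimal cycle. We first record that $Z_1$ must be pure: otherwise the subcomplex of $Z_1$ spanned by all $2$-faces and their sub-simplices would be a \emph{proper} subcomplex of $Z_1$ with $b_2=1\neq 0$, so we include purity of $Z_1$ (which holds in every application of the lemma; the example $Z_1=Z_2\sqcup\{\mathrm{pt}\}$ shows it cannot simply be omitted). Granted purity, any proper subcomplex $Z_1'\subsetneq Z_1$ omits at least one $2$-face $\sigma$; then $\ker\partial_2(Z_1')\subseteq\ker\partial_2(Z_1)$ consists of the $2$-cycles not involving $\sigma$, and since the unique nonzero cycle $z_1$ of $Z_1$ does involve $\sigma$ we get $\ker\partial_2(Z_1')=0$, i.e.\ $b_2(Z_1')=0$. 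Thus $Z_1$ is a minimal cycle, and in particular strongly connected by the remark following Definition \ref{def1}. I expect the only real content to be the full-support property (this is where the minimality of $Z_2$ is spent) together with the point that the hypothesis ``$\phi$ is a bijection on faces'' is exactly what makes this property survive the passage back to $Z_1$; the purity bookkeeping is the one place where a little care with the statement is needed.
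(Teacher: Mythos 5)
Your argument is, at bottom, the same as the paper's: push $2$-cycles forward along the face bijection and invoke the minimality of $Z_2$ (the paper phrases it contrapositively in two lines -- a proper subcomplex $Z_1'\subset Z_1$ with $b_2(Z_1')=1$ would map to a proper subcomplex of $Z_2$ still carrying a nontrivial $2$-cycle). Your version over $\mathbb{Q}$ (equivalently $\Z$) is correct, and your purity caveat is legitimate: the statement does implicitly assume $Z_1$ pure (the paper's own proof shares this gloss), and it holds in the one place the lemma is used.

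However, your default choice of $\mathbb{F}_2$-coefficients is a genuine error, not an interchangeable convention. The paper's $b_2$ is the rational Betti number, and minimality of $Z_2$ only guarantees that proper subcomplexes carry no nontrivial \emph{rational} $2$-cycle; they may well carry nontrivial mod-$2$ cycles. Concretely, take $Z_2=Z_4=P^2\cup\Delta^2$ from Theorem \ref{lm4}: this is a minimal cycle, its integral $2$-cycle is $[P^2]+2[\Delta^2]$ (full support, as your $\mathbb{Q}$-argument requires), but its reduction mod $2$ is $[P^2]$, a nonzero element of $\ker\partial_2(Z_4;\mathbb{F}_2)$ supported on the proper subcomplex $P^2$. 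So your ``full support'' property is false over $\mathbb{F}_2$, and the identity $\dim_{\mathbb{F}_2}\ker\partial_2=b_2$ also fails in general (by universal coefficients it acquires a $\mathrm{Tor}(H_1,\mathbb{F}_2)$ contribution). The fix is exactly the parenthetical you already wrote -- run the whole argument with $\mathbb{Q}$- or $\Z$-coefficients, where $\phi_\#$ still carries the basis of oriented faces bijectively to the basis up to sign -- but you should promote that from an aside to the actual proof and delete the $\mathbb{F}_2$ framing.
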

\begin{proof} Assuming that $Z_1$ is not a minimal cycle we may find a proper subcomplex $Z_1'\subset Z_1$ with $b_2(Z'_1)=1$. Then our assumptions imply that the subcomplex 
$\phi(Z_1')\subset Z_2$ is proper and carries a nontrivial 2-cycle, i.e. $b_2(\phi(Z_1'))\ge 1$ contradicting the minimality of $Z_2$. 
\end{proof}

\begin{lemma}\label{lm2}
Let $Z$ be a minimal cycle and let $D\subset Z$ be a subcomplex homeomorphic to a 2-disc such that the interior $\int(D)$ is open in $Z$. Denote by $Z'=Z- \int(D)$, the result of deleting the interior of $D$. Then the 2-complex $Z'$ is strongly connected. 
\end{lemma}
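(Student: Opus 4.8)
The plan is to argue by contradiction. Suppose $Z' = Z - \int(D)$ is not strongly connected; then $Z'$ decomposes into strongly connected components $Z'_1, \dots, Z'_k$ with $k \ge 2$, where each $Z'_i$ is a pure 2-complex and any two of them meet only along vertices (and possibly edges of degree... actually only along vertices, by definition of strongly connected component). The key observation is that the disc $D$ is itself strongly connected, so when we reassemble $Z = Z' \cup D$ by re-gluing the interior of $D$, the disc $D$ must be attached along its boundary circle $\partial D$, and $\partial D$ is connected. Hence $D$ can be glued to at most one of the components $Z'_i$ in a way that affects strong connectivity — more precisely, $Z = \left(\bigcup_i Z'_i\right) \cup D$, and since $D$ is strongly connected and $\partial D$ is connected, the union $Z'_{i_0} \cup D$ for the unique $i_0$ whose closure meets $\int(D)$...

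Let me restructure. First I would note that each $Z'_i$ is a proper subcomplex of $Z$ (since $D$ has at least one face, which lies in $Z$ but in no $Z'_i$), hence by minimality $b_2(Z'_i) = 0$ for every $i$. The faces of $Z$ are partitioned as $F(Z) = F(D) \sqcup \bigsqcup_i F(Z'_i)$. Now consider the nontrivial 2-cycle $z$ generating $H_2(Z;\Z/2) \cong \Z/2$ (working mod 2 is cleanest since minimal cycles need not be orientable). Writing $z$ as a sum of faces, its support must be all of $F(Z)$: indeed if some face were missing, $z$ would lie in a proper subcomplex, contradicting minimality. So $z$ uses every face of $D$ and every face of each $Z'_i$. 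But $\partial z = 0$ means that restricting attention to any edge $e$, the number of faces in the support of $z$ containing $e$ is even. Since $Z$ is closed, $\deg_Z(e)$ is... not necessarily even. Hmm — rather, the relevant point is that the faces of $Z'_i$ together with the faces of $D$ that are "adjacent" across $\partial D$ must form a closed chain.

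The cleaner route: consider the subcomplex $W_i = Z'_i \cup D$ for the component $Z'_{i_0}$ adjacent to $D$ along $\partial D$, versus $W = Z'_j$ for the others. Since the components $Z'_i$ for $i \ne i_0$ are separated from $D \cup Z'_{i_0}$ by vertices only, and since the mod-2 second Betti number is additive over strongly connected components that meet in dimension $0$, we get $b_2(Z; \Z/2) = b_2(Z'_{i_0} \cup D; \Z/2) + \sum_{i \ne i_0} b_2(Z'_i; \Z/2)$. The left side is $1$ (or at least $\ge 1$ in char $2$), each $b_2(Z'_i) = 0$, so $b_2(Z'_{i_0}\cup D) \ge 1$. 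If $k \ge 2$ there is at least one $j \ne i_0$ with $F(Z'_j) \ne \emptyset$, and then $Z'_{i_0} \cup D$ is a proper subcomplex of $Z$ carrying a $2$-cycle, contradicting that $Z$ is a minimal cycle. Hence $k = 1$, i.e. $Z'$ is strongly connected.

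The main obstacle I anticipate is justifying the additivity of $b_2$ (mod $2$) over the decomposition $Z = (Z'_{i_0}\cup D) \cup \bigcup_{j\ne i_0} Z'_j$ where the pieces overlap only in a set of vertices: this requires a Mayer–Vietoris argument observing that $H_2$ of a $0$-dimensional intersection vanishes, so $H_2$ of the union is the direct sum of the $H_2$'s of the pieces, provided the pieces are glued along vertices (if two pieces shared an edge, $H_1$ of the intersection could contribute, but by definition distinct strongly connected components of a pure complex share only vertices — and $D$ meets $Z'_{i_0}$ exactly along $\partial D \subset Z'_{i_0}$, which could be a nontrivial $1$-complex, so the Mayer–Vietoris for that overlap needs $H_2(\partial D) = 0$, which holds since $\partial D \cong S^1$). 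I should also double-check that $D$ is attached to only one component — equivalently that $\partial D$ is connected and meets only one $Z'_i$ — which follows because the faces of $D$ all lie in a single strongly connected set and $\int(D)$ being open in $Z$ means no extra faces of $Z$ are glued onto $\int(D)$. Care is needed about the precise meaning of "strongly connected component" and whether $D$ could span two components; a short argument using connectedness of $\int(D) \cup \partial D$ inside $Z$ settles this.
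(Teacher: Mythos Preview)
Your approach has a genuine gap at the step where you claim $\partial D$ meets only one strongly connected component $Z'_{i_0}$. This is false. Strongly connected components of a pure $2$-complex are closures of face-adjacency classes; distinct components can share vertices. The circle $\partial D$ is connected, but it is a $1$-dimensional object, and as it passes through a shared vertex it can (and in fact must) move from one $Z'_i$ to another. Concretely: since $Z$ is strongly connected, every $Z'_i$ must contain at least one edge of $\partial D$ (otherwise that $Z'_i$, meeting the rest of $Z$ only in vertices, would already be a separate strongly connected component of $Z$). So when $k\ge 2$ the circle $\partial D$ is necessarily partitioned into arcs belonging to \emph{different} $Z'_i$'s, and your decomposition $Z=(Z'_{i_0}\cup D)\cup\bigcup_{j\ne i_0}Z'_j$ with the latter pieces meeting the first only in vertices does not hold. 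Consequently the additivity of $b_2$ you invoke breaks down: the overlaps $D\cap Z'_j$ for $j\ne i_0$ are $1$-dimensional arcs, not vertex sets.

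The paper circumvents this by collapsing $D$ to a point. One has $b_2(Z)=b_2(Z/D)$ since $D$ is contractible, and $Z/D$ is homotopy equivalent to a wedge of the quotients $Z'_i/(Z'_i\cap D)$ together with some circles. The key observation is that each $Z'_i\cap D$ is a \emph{proper} subcomplex of $\partial D\simeq S^1$ (if some $Z'_i\cap D=\partial D$ then $Z'$ is already strongly connected), hence a disjoint union of arcs, so collapsing it does not change $b_2$: $b_2(Z'_i/(Z'_i\cap D))=b_2(Z'_i)$. This yields $b_2(Z)=\sum_i b_2(Z'_i)$, forcing some proper $Z'_i$ to have $b_2=1$ and giving the contradiction. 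Your argument can be repaired along exactly these lines, but not by asserting that only one component touches $D$.
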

\begin{proof}
Assume the contrary, i.e. let $Z'$ have at least two strongly connected components $Z'_1, Z'_2, \dots$. Then the quotient $Z/D$ is homotopy equivalent to 
the wedge sum of the spaces 
$Z'_i/(Z'_i\cap D)$ (where $i=1, 2, \dots$) and some number of circles. 
It is obvious that in the case $Z_i'\cap D=\partial D$ for some $i$ the complex $Z'$ is strongly connected. Thus we may assume that each intersection $Z_i'\cap D$ is a proper subcomplex of the circle $\partial D$, i.e.  $Z_i'\cap D$ is homeomorphic to a disjoint union of intervals. 
Hence the quotient $Z_i'/(Z_i'\cap D)$ is homotopy equivalent to the wedge sum of $Z_i'$ with several copies of $S^1$ which gives 
$b_2(Z_i'/(Z_i'\cap D))= b_2(Z_i')$. 
Thus we have 
\begin{eqnarray}\label{colap1}b_2(Z) = b_2(Z/D) = \sum_i b_2(Z'_i/(Z'_i\cap D))= \sum_i b_2(Z'_i).\end{eqnarray}
From (\ref{colap1}) it follows that among the numbers $b_2(Z'_i)$ exactly one equals 1 and the others vanish. This contradicts the assumption that $Z$ is a minimal cycle since it contains a proper subcomplex $Z'_i\subset Z$ with $b_2(Z'_i)=1$.  
\end{proof}

\begin{theorem}\label{lm4} {\rm [Classification of minimal cycles]} Let $Z$ be a minimal cycle satisfying $$\mu(Z) > 1/2.$$ Then $Z$ is homeomorphic to one of the following 2-complexes
$Z_1, Z_2, Z_3, Z_4,$
where $Z_1=S^2$; $Z_2$ is the quotient of $S^2$ with two distinct points identified; $Z_3$ is the quotient of $S^2$ with two adjacent arcs identified; and $Z_4$ is defined as 
$P^2\cup \Delta^2$, where $\partial \Delta^2= P^2\cap \Delta^2$ equals the curve $P^1\subset P^2$. 
\end{theorem}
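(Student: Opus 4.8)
I would extract strong numerical restrictions from $\mu(Z)>1/2$ together with the facts, noted right after Definition~\ref{def1}, that a minimal cycle is closed and strongly connected, and then run a short case analysis. Write $v,E,f$ for the numbers of vertices, edges and faces of $Z$. Since $Z$ is closed, $\deg e\ge 2$ for every edge, and since every face has three edges, $\sum_e\deg e=3f$; hence $\delta:=\sum_e(\deg e-2)=3f-2E$ is a nonnegative integer, so $E\le 3f/2$ and
\[
\chi(Z)=v-E+f\ \ge\ v-\tfrac f2\ =\ f\bigl(\mu(Z)-\tfrac12\bigr)\ >\ 0 .
\]
Thus $\chi(Z)\ge 1$; since $Z$ is connected with $b_2(Z)=1$ this forces $b_1(Z)=2-\chi(Z)\in\{0,1\}$, i.e. $\chi(Z)\in\{2,1\}$. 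Moreover $2v\ge f+1$ (as $v,f$ are integers with $v/f>1/2$), so substituting $E=v+f-\chi(Z)$ gives $\delta=f-2v+2\chi(Z)\le 2\chi(Z)-1\le 3$. Hence $Z$ carries at most three ``excess incidences'' along edges of degree $\ge3$: if $\chi(Z)=1$ there is at most one such edge and it has degree $3$; if $\chi(Z)=2$ the multiset of degrees $\ge3$ is one of $\{3\},\{3,3\},\{3,3,3\},\{4\},\{4,3\},\{5\}$. Finally, by Corollary~\ref{cor1}, $\sum_v\sing_Z(v)\le b_1(Z)\le 1$, so $Z$ has at most one vertex with disconnected link, and such a vertex occurs only when $\chi(Z)=1$.

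\emph{Tame cases.} If $Z$ has no singular vertex and $\delta=0$, then every vertex link is a circle, so $Z$ is a closed surface; a closed surface with $b_2=1$ is orientable, hence of even Euler characteristic, so $\chi(Z)=2$ and $Z\cong S^2=Z_1$. If $Z$ has its (unique) singular vertex, cut it open: the resulting surjection $\tilde Z\to Z$ is a bijection on faces and on edges, and $b_2(\tilde Z)=b_2(Z)=1$ because $Z\simeq\tilde Z\vee S^1$; by Lemma~\ref{lemmanew} applied to this map, $\tilde Z$ is again a minimal cycle, now with no singular vertex, $\chi(\tilde Z)=2$, $b_1(\tilde Z)=0$, and the same edge degrees as $Z$; also $\mu(\tilde Z)\ge\mu(Z)>1/2$, since $\tilde Z$ has the same faces and edges and one more vertex. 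If in addition $\delta=0$, then $\tilde Z\cong S^2$ by the surface case, so $Z$ is $S^2$ with two distinct vertices identified, i.e. $Z\cong Z_2$. After this reduction it remains to classify minimal cycles with $\mu(Z)>1/2$, no singular vertex, $\chi(Z)=2$, $b_1(Z)=0$ and $\delta\in\{1,2,3\}$.

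\emph{Cases with high-degree edges.} Let $z=\sum_\sigma a_\sigma\sigma$ be a nonzero $2$-cycle with integer coefficients, chosen primitive. By minimality every $a_\sigma\ne0$ (otherwise the faces with $a_\sigma\ne0$ span a proper subcomplex carrying the nonzero cycle $z$). For an edge $e$ of degree $2$ in faces $\sigma,\sigma'$, vanishing of the $e$-coefficient of $\partial z$ gives $|a_\sigma|=|a_{\sigma'}|$; hence $|a_\bullet|$ is constant on each connected component of the graph $\Gamma$ with one vertex per face of $Z$ and one edge per degree-$2$ edge of $Z$. If $\Gamma$ is connected, all $|a_\sigma|$ equal some $c>0$, and at an edge of degree $3$ or $5$ the $e$-coefficient of $\partial z$ is a $\pm$-sum of an odd number of copies of $c$, hence nonzero; this rules out $\{3\},\{3,3\},\{3,3,3\},\{4,3\},\{5\}$ in the connected case, leaving $\{4\}$, which is $Z_3$ --- the quotient of $S^2$ in which two adjacent single-edge arcs are identified --- recognised as the minimal cycle having exactly one edge of degree $4$ and all other edges of degree $2$. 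If $\Gamma$ is disconnected, the deletions producing $\Gamma$ from the (connected) dual graph of $Z$ --- at most three edges of small multiplicity --- must disconnect it, and here I would combine the coefficient constraints (and, where useful, the $\mathbb Z/2$-reduction of $z$), the identity $\delta=f-2v+2\chi(Z)$, strong connectedness, and Lemma~\ref{lm2} (deleting the interior of an embedded disc leaves a strongly connected complex) to show that the only surviving configuration is $\{3,3,3\}$ with the three degree-$3$ edges forming the boundary of a $2$-simplex $\Delta^2$; then $Z-\int(\Delta^2)$ is a closed surface of Euler characteristic $\chi(Z)-1=1$, hence $P^2$ with $\partial\Delta^2=P^1\subset P^2$, so $Z\cong P^2\cup\Delta^2=Z_4$. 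Together with the tame cases, $Z$ is homeomorphic to one of $Z_1,Z_2,Z_3,Z_4$.

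\emph{Where the difficulty lies.} The numerics and the surface recognition are routine; the content is concentrated in the last part. The delicate points are: enumerating exhaustively how a (possibly disconnected) compact surface-with-boundary can be glued along at most three low-multiplicity ``spine'' edges into a closed strongly connected complex; using minimality to discard every resulting pattern except those producing $Z_3$ and $Z_4$ (for each discarded ``straddling'' pattern one must exhibit a proper subcomplex still carrying a nontrivial $2$-cycle, or a violation of $b_2(Z)=1$ or of $\mu(Z)>1/2$, which is where the $\mathbb Z/2$-reduction of $z$ and the Euler-characteristic budget are needed); and confirming that in the surviving $\{3,3,3\}$ case the complementary surface must be $P^2$, not a disc, annulus or Möbius band (each of which would contradict either closedness, $\mu(Z)>1/2$ or $b_2(Z)=1$).
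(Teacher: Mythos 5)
Your numerology is exactly the paper's: writing $L(Z)=\sum_e(2-\deg e)=-\delta$, the condition $\mu(Z)>1/2$ is equivalent to $2\chi(Z)+L(Z)>0$, which gives $\delta\le 2\chi(Z)-1\le 3$, and your tame cases ($\delta=0$, with or without a singular vertex) reproduce the paper's Case A and correctly yield $Z_1$ and $Z_2$. The problem is that almost everything after that point is announced rather than proved, and it is precisely there that the paper does its real work.

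Concretely, there are three gaps. First, your exclusion of the degree multisets containing an odd entry rests on the $2$-cycle coefficients having constant absolute value, which you only obtain when the graph $\Gamma$ is connected; the paper instead uses a purely local parity argument (a vertex incident to an odd-degree edge but to no other high-degree edge would have a link with exactly one vertex of odd degree, which is impossible), which needs no connectivity hypothesis and also disposes of the leftover case $\chi(Z)=1$, $\delta=1$ --- a case your reduction to $\tilde Z$ does not cover when $Z$ has no singular vertex, since then $\chi(\tilde Z)=\chi(Z)=1\neq 2$. Second, in the $\{4\}$ case you simply assert that the complex is $Z_3$; the paper proves this by showing the links of the two endpoints of the degree-$4$ edge are figure-eights, unfolding that edge into two degree-$2$ edges to get a homotopy-equivalent pseudo-surface, invoking Lemma \ref{lemmanew} to conclude the unfolded complex is a minimal cycle and hence (being a nonsingular closed surface with $b_1=0$, $b_2=1$) a sphere, and then identifying $Z$ as that sphere with two adjacent edges folded together. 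Third, and most seriously, the disconnected-$\Gamma$ case --- the only place $Z_4$ can arise --- is left entirely as a plan. The paper's treatment of it is the heart of the proof: it shows the three degree-$3$ edges must form a triangle, determines the two possible links of its vertices and excludes one of them using Lemma \ref{lm2} together with Corollary \ref{cor1}, splits on whether a $2$-simplex spans the triangle, and in the negative subcase introduces a monodromy permutation of the three faces around the triple circle, kills the one-orbit case by the coefficient argument, and in the remaining cases cuts $Z$ along the triple circle and classifies the resulting surface pieces by Euler characteristic and minimality to arrive at $(P^2,P^1)$ plus a disc. Your proposal names the tools one would want here but does not carry out any of these steps, so as written it does not establish the theorem.
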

\begin{figure}[h]
\centering
\includegraphics[width=0.3\textwidth]{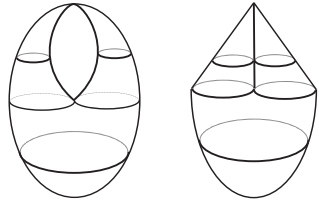}
\caption{Minimal cycles $Z_2$ (left) and $Z_3$ (right).}\label{fig6}
\end{figure}

\begin{proof} Recall that for an edge $e\subset Z$ one denotes by $\deg e$ the degree of $e$, i.e. the number of faces containing $e$. Using the following formula
\begin{eqnarray}\label{mu}\mu(Z) = \frac{1}{2} + \frac{2\chi(Z) +L(Z)}{2f(Z)},\end{eqnarray}
where $$L(X) = \sum_e(2-\deg(e)),$$ the sum is taken over all edges $e$ of $X$,  (see (7), (8) in \cite{CF}),
we see that the condition $\mu(Z) >1/2$ translates into 
\begin{eqnarray}\label{mu12}2\chi(Z) + L(Z)>0.\end{eqnarray}

Note that $L(Z) \le 0$ since $Z$ is pure and closed. We consider now a few special cases:

Case A: $L(Z) =0$. Then $Z$ is a pseudo-surface\footnote{{\it A pseudo-surface} is a 2-complex which is strongly connected and has the property that each edge is incident to exactly 2 faces. 
}, i.e. each edge has degree 2. 
The link of every vertex is a disjoint union of several circles. In other words $Z$ can be obtained from a connected surface $\tilde Z$ by identifying several vertices. By (\ref{mu12}) one has $\chi(Z)>0$ and hence $Z$ is either the sphere $Z_1$ or the sphere with two points identified $Z_2$. 
Hence we see that in the case A the complex $Z$ is homeomorphic either to $Z_1$ or $Z_2$. 

Case B: $L(Z)=-1$. Let us show that this is impossible. Indeed, in this case all edges have degree 2 except one edge which has degree 3. If $e$ is this edge of degree 3 then 
for one of the incident vertices $v$ one considers the link of $v$; it is a graph in which all vertices except one have degree 2 and one vertex has degree 3. This is impossible since 
the sum of degrees of all vertices in a graph is always even (as it equals twice the number of edges of the graph). 

If $b_1(Z) \ge 1$ then  $\chi(Z) \le 1$ and (\ref{mu12}) gives $L(Z) \ge -1$, i.e. $L(Z)=0$ or $L(Z)=-1$. Thus minimal cycles satisfying $b_1(Z)\ge 1$ we are in one of the cases A or B considered above. We shall assume below that $b_1(Z)=0$. 

By Corollary \ref{cor1} the condition $b_1(Z)=0$ implies that the link of every vertex of $Z$ is connected.

The assumptions $b_2(Z)=1$ and $b_0(Z)=1$ imply $\chi(Z) = 2$ and hence 
$\mu(Z) > 1/2$ is equivalent to $L(Z) \ge -3$. 
Thus, we have to consider the following four special cases $L(Z)=0,-1, -2, -3$ with the two first analysed above. 

Case C: $L(Z)=-2$. There are two possibilities: either $Z$ contains two edges of degree 3 or one edge of degree 4. Consider the first possibility: let $e, e'$ be two edges of degree 3 and let $v$ be a vertex incident to $e$ but not to $e'$. Then the link of $v$ is a graph with all  vertices of degree 2 except one which has degree 3, which is impossible. 

\begin{figure}[h]
\centering
\includegraphics[width=0.7\textwidth]{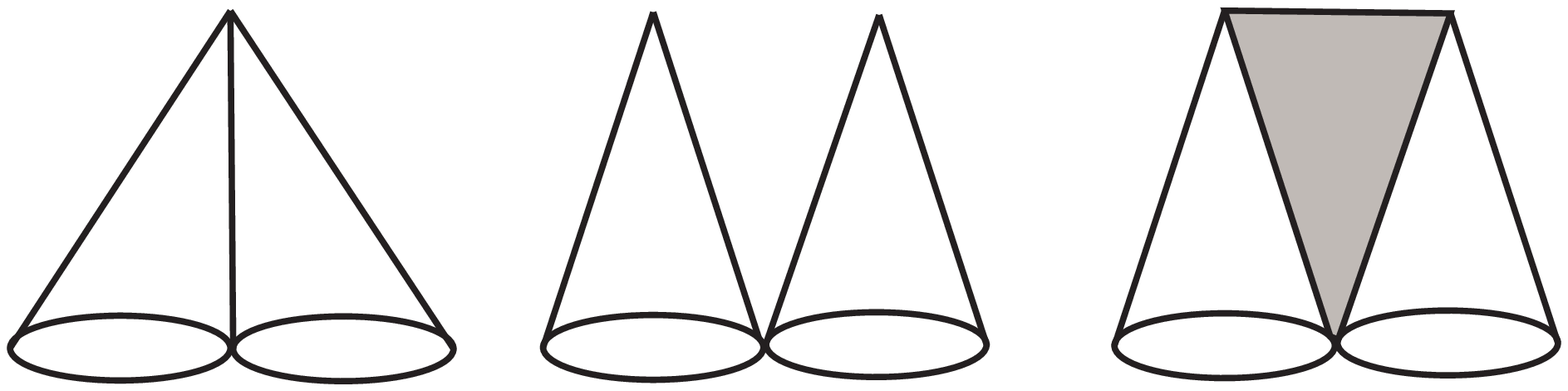}
\caption{Cones.}\label{fig3}
\end{figure}
Now suppose that $Z$ contains an edge $e$ of degree 4 and all other edges of $Z$ have degree 2. Denote by $v, w$ the endpoints of $e$. The links of $v$ and $w$ are connected graphs with one vertex of degree 4 and all other vertices of degree 2.  Hence these links are homeomorphic to the figure eight $\Gamma_8$ (the wedge sum of two circles). The regular neighbourhood of $v$ is the star of $v$ which is homeomorphic to the cone over $\Gamma_8$. 
Replacing the cone $C(\Gamma_8)$ by the wedge of two cones over the base circles (see Figure \ref{fig3}, middle) gives a pseudo surface $Z'$ on which the original edge $e$ of degree 4 is represented by two adjacent edges $e_1, e_2$ of degree 2. Clearly $Z'$ is homotopy equivalent to $Z$: indeed, attaching a triangle to $Z'$ as shown on 
Figure \ref{fig3} (right) gives a homotopy equivalent space as the triangle can be collapsed along the free edge; on the other hand collapsing the whole triangle gives a space homotopy equivalent to $Z$. 
By Lemma \ref{lemmanew} the complex $Z'$ is a minimal cycle and therefore it is strongly connected. 
Thus, $Z'$ is a pseudo-surface with $b_1(Z')=0$ and $b_2(Z')=1$. Using Corollary \ref{cor1} we see that $Z'$ is nonsingular and hence is homeomorphic to $S^2$. 
The complex $Z$ is obtained from this sphere by folding two adjacent edges $e_1, e_2$. Thus, $Z$ is homeomorphic to $Z_3$. 

Case D: $L(Z)=-3$. There are three possibilities: either (1) $Z$ contains one edge $e$ of degree 5 and all other edges have degree 2, or (2) there are two edges $e_1, e_2$ with 
$\deg(e_1)= 3$, and $\deg(e_2) = 4$ and all other edges have degree $2$, and (3) there are 3 edges $e_i$ with $\deg(e_i)=3$ for $i=1, 2, 3$ and all other edges of $Z$ have degree 2. In the case (1), if $v$ is a vertex incident to $e$, then the link of $v$ is a graph with one vertex of degree $5$ and all other vertices of degree $2$ which is impossible. Similarly, the case (2) is impossible; one may apply these arguments to the vertex of $e_1$ which is not incident to $e_2$. The same reasoning shows that in the case (3) the only possibility is that the edges $e_1, e_2, e_3$ form a triangle. 

Below we assume that the edges of degree three $e_1, e_2, e_3$ form a triangle with vertices $v_1, v_2, v_3$, i.e. the edge $e_1$ has vertices $v_1, v_2$, the edge $e_2$ has vertices $v_2, v_3$ and $e_3$ has vertices $v_3, v_1$.

Consider the link of $v_i$ in complex $Z$. It is a graph with two vertices of degree three and with all other vertices of degree 2. Hence the link is homeomorphic to 
one of the graphs $\Gamma_1, \Gamma_2$ shown on Figure \ref{fig1}. 
\begin{figure}[h]
\centering
\includegraphics[width=0.4\textwidth]{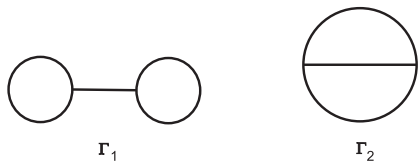}
\caption{Possible links of $v_i$.}\label{fig1}
\end{figure}
Let us show that the graph $\Gamma_1$ is in fact impossible. Indeed, suppose that the link of $v_i$ is homeomorphic to $\Gamma_1$. The arc connecting two triple points of $\Gamma_1$ corresponds to a sequence of 2-faces $\sigma_1, \dots, \sigma_r$ such that $\sigma_j\cap \sigma_{j+1}$ is an edge containing $v_i$. The union 
$\sigma=\sigma_1\cup \dots \cup \sigma_r$ is a subcomplex homeomorphic to a disc. Consider $Z'=Z-\int(\sigma)$. By Lemma \ref{lm2}, the complex $Z'$ is strongly connected. We have $\chi(Z')=1$ and $b_2(Z')=0$ and hence $b_1(Z')=0$. This contradicts Corollary \ref{cor1} since the link of $v_i$ in $Z'$ has two connected components.

Suppose that there is a 2-simplex $\sigma$ in $Z$ having the vertices $v_1, v_2, v_3$. Then removing the interior of this simplex we obtain a 
pseudo surface $Z'$ (i.e. $L(Z')=0$) with $b_1(Z')=b_2(Z')=0$. 
By Lemma \ref{lm2}, the complex $Z'$ must be strongly connected. 
Then $\sing(v)=0$ for every vertex $v\in Z'$, since otherwise $b_1(Z')>0$, see Corollary \ref{cor1}. Hence $Z'$ is a genuine closed surface and since we know the Betti numbers of $Z'$, clearly $Z'$ must be homeomorphic to the real projective plane. Thus $Z$ is homeomorphic to the union of $P^2$ and a triangle $\Delta^2$. The intersection $P^2\cap \Delta^2$ is a simple closed curve. 
If this curve is null-homologous on $P^2$ then 
this curve must separate $P^2$ into two connected components one of which must be homeomorphic to a 2-disc and together with $\Delta^2$ this disc would form proper a 2-cycle in $Z$ contradicting the minimality of $Z$ (see Definition \ref{def1}). Hence, the curve $P^2\cap \Delta^2$ is homologous to $P^1\subset P^2$. Therefore $Z$ is homeomorphic to $Z_4$.

Next we consider the case that there is no 2-simplex with sides $e_1, e_2, e_3$. 
Let us label the three 2-faces incident to $e_1$ by the symbols $1,2,3$. 
In the link of $v_2$ the edge $e_1$ corresponds to one of the vertices of degree three and the symbols $1, 2, 3$ are now associated with the three arcs (see the graph $\Gamma_2$ on Figure \ref{fig1}) 
incident to this vertex. Viewing the other triple point of the link of $v_2$ we obtain a labelling by the symbols $1, 2, 3$ of the three triangles incident to the edge $e_1$. Thus we see that a labelling by the symbols $1, 2, 3$ of the three triangles containing an edge $e_i$ determines a labelling of the following edge $e_{i+1}$. 
Performing this process in the following sequence $e_1\to e_2\to e_3\to e_1$ we obtain a permutation $\tau$ of the symbols $1, 2, 3$. 
We denote by $k \, (=1, 2, 3)$ the number of orbits of the action of $\tau$ on $\{1, 2, 3\}$.

Case D1: Suppose that $k=1$. Let $c=\sum_j n_j \sigma_j$ be a nontrivial 2-cycle of $Z$ with integral coefficients $n_j$; here the symbols 
$\sigma_i$ denote distinct oriented simplexes of $Z$. Due to our minimality assumption, $n_i\not=0$ for any $i$.  If two simplexes $\sigma_i$ and $\sigma_{i'}$ have a common edge  of degree two then $n_i = \pm n_{i'}$.  The complement 
$Z-E$ (where $E$ is the union of the triple edges $e_1, e_2, e_3$) is strongly-connected since $Z$ is strongly connected and $k=1$, i.e. one may pass from any face incident to $E$ to any other face incident to $E$ by jumping across edges not in $E$. Thus, any two 2-simplexes $\sigma, \sigma'$ of $Z$ can be connected by a chain $\sigma=\sigma_1, \sigma_2, \dots, \sigma_r=\sigma'$ such that each $\sigma_i$ and $\sigma_{i+1}$ have a common edge of degree two. 
Hence we obtain that $n_i=\pm n_j$ for any $i, j$. This leads to a contradiction 
with $\partial c=0$ since 
near an edge  $e_r$ of degree three we will have three 2-simplexes, each contribution the same amount, up to a sign. Thus, Case D1 is impossible. 

Case D2: Suppose that $k=2$ or $k=3$. Then near the union of the triple edges $E$ the complex $Z$ is homeomorphic either to $Y\times S^1$ (where $Y$ is the graph representing the letter Y) or to the union of the M\"obius band $M$ and the cylinder $N=[0,1]\times S^1$ such that $M\cap N=0\times S^1$ is the central circle of the 
M\"obius band. In either case we may cut $Z$ along the triple circle $E$, separating a cylinder from the rest, such that the result is a surface $\hat Z$ with boundary. 
Points of $E$ will be represented on $\hat Z$ by two circles, one being the base of the cylindrical part and another the middle circle of a cylinder or a M\"obius band. 
If $\hat Z$ is connected then it is homotopy equivalent to a graph and hence $\chi(\hat Z)=\chi(Z) \le 1$. Since $L(Z)=-3$, we see that (\ref{mu12}) is violated. 
Therefore 
the surface $\hat Z$ must have two connected components $\hat Z_1\sqcup \hat Z_2$, one of which (say, $\hat Z_1$) is a closed surface, while the other $\hat Z_2$ has one boundary circle. The minimality condition implies that the closed surface $\hat Z_1$ must be non-orientable.
Thus we have $2= \chi(Z)=\chi(\hat Z)=\chi(\hat Z_1) + \chi(\hat Z_2)$ and $\chi(\hat Z_1)\le 1$, $\chi(\hat Z_2) \le 1$ which implies that $\chi(\hat Z_1)=1=\chi(\hat Z_2)$. Therefore, $\hat Z_1$ is the real projective plane and $\hat Z_2$ is a disk. 
We obtain that $Z$ is the union of the projective plane and a disk intersecting along the triangle $E$. 
Clearly $E$ is not null-homologous in $\hat Z_1$ (since otherwise it would cut the projective plane into two connected components one of which would be orientable and together with $\hat Z_2$ it would make a cycle, contradicting minimality of $Z$). 

Clearly the pair $(\hat Z_1, E)$ is homeomorphic to $(P^2, P^1)$, and $Z$ is homeomorphic to $Z_4$. 

This completes the proof. 
\end{proof}

%
%
%

\begin{corollary}\label{cor2} In any minimal cycle $Z$ with $\mu(Z) >1/2$ there exists a 2-face $\sigma$ such that the boundary curve $\partial \sigma$ is null-homotopic in the complement $Z - \int(\sigma)$. 
\end{corollary}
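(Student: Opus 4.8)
The plan is to invoke the classification of Theorem \ref{lm4}: $Z$ is homeomorphic to one of $Z_1=S^2$, $Z_2$, $Z_3$ or $Z_4=P^2\cup\Delta^2$. I would treat the three \lq\lq spherical\rq\rq\ types $Z_1,Z_2,Z_3$ uniformly and deal with $Z_4$ separately.

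For $Z_1,Z_2,Z_3$ the point is that, as the proof of Theorem \ref{lm4} shows, $Z$ is the image of a simplicial triangulation $Z'$ of the $2$-sphere under a simplicial map $q\colon Z'\to Z$ which is a bijection on the sets of $2$-faces: for $Z_1$ the map $q$ is an isomorphism, for $Z_2$ it identifies two vertices, and for $Z_3$ it folds two adjacent edges (the unfolding construction in Case C of that proof produces exactly such a $Z'$, with $f(Z')=f(Z)$). Since $Z$ is an honest simplicial complex and $q$ is simplicial, $q$ cannot collapse two vertices or two edges of a single $2$-simplex, hence it is injective on every closed $2$-face and on its boundary. I would then pick an arbitrary $2$-face $\bar\sigma$ of $Z'$, set $\sigma=q(\bar\sigma)$, and let $D$ be the union of the remaining $2$-faces of $Z'$ together with their subsimplices; since $|Z'|\cong S^2$, the subcomplex $D$ is a $2$-disc with $\partial D=\partial\bar\sigma$. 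Because $q$ is bijective on faces one has $q^{-1}(\int(\sigma))=\int(\bar\sigma)$, so $q$ restricts to a map $q|_D\colon D\to Z-\int(\sigma)$ whose restriction to $\partial D$ is a homeomorphism onto $\partial\sigma$. Thus $\partial\sigma$ is the boundary restriction of a map of a $2$-disc into $Z-\int(\sigma)$ and is therefore null-homotopic there.

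For $Z_4=P^2\cup\Delta^2$ with $\partial\Delta^2=P^2\cap\Delta^2=P^1$, I would take $\sigma$ to be \emph{any} $2$-face of the subcomplex $P^2$ (in particular $\sigma\neq\Delta^2$; such a face exists, a triangulated projective plane having at least ten $2$-faces). Then $Z-\int(\sigma)=\bigl(P^2-\int(\sigma)\bigr)\cup_{P^1}\Delta^2$, where $M:=P^2-\int(\sigma)$ is a M\"obius band and $P^1$ lies in its $1$-skeleton. By the construction of $Z_4$ in the proof of Theorem \ref{lm4}, $P^1$ is an embedded simple closed curve which is not null-homologous in $P^2$; an embedded circle in a M\"obius band is isotopic to the core, to the boundary circle, or to a trivial circle, and the latter two are null-homologous in $P^2$ (the boundary of $M$ represents twice a generator of $\pi_1(M)\cong\mathbb Z$), so $P^1$ must be a core of $M$ and hence generates $\pi_1(M)$. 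Attaching the $2$-cell $\Delta^2$ along $P^1$ therefore kills $\pi_1(M)$, so $Z-\int(\sigma)$ is simply connected by van Kampen; in particular $\partial\sigma$ is null-homotopic in $Z-\int(\sigma)$.

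The step I expect to be the main obstacle is the $Z_4$ case: one must be careful to delete a face of $P^2$ rather than the face $\Delta^2$ itself (removing $\Delta^2$ leaves $P^2$, in which $\partial\Delta^2=P^1$ is decidedly not null-homotopic), and to verify that $P^1$ survives in the resulting M\"obius band as a core curve — this is where the information extracted in Theorem \ref{lm4} (that $P^1$ is embedded and homologically essential in $P^2$) together with the elementary classification of circles on the M\"obius band is used. By contrast, the spherical cases are essentially formal once one records that the relevant identification maps are bijective on $2$-faces.
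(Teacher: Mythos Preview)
Your argument is correct and is exactly the intended one: the paper states Corollary~\ref{cor2} without proof as an immediate consequence of the classification Theorem~\ref{lm4}, and the later proof of Theorem~\ref{ordertwo} confirms your case split (for $Z_1,Z_2,Z_3$ \emph{any} face works, while for $Z_4$ one must take $\sigma\subset P^2$). Your verification of the $Z_4$ case via the classification of embedded circles on the M\"obius band is a clean way to nail down that $Z_4-\int(\sigma)$ is simply connected.
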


\begin{lemma}\label{btwo0}
Let $X$ be a finite closed strongly connected pure 2-complex with $b_2(X)=0$. 
Then either $\mu(X)\le 1/2$ or $X$ is homeomorphic to the projective plane ${\bf RP}^2$. 
\end{lemma}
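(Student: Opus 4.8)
The plan is to reduce \textbf{Lemma \ref{btwo0}} to the already-established \textbf{Theorem \ref{lm4}} by passing to a minimal cycle inside $X$. Suppose $\mu(X) > 1/2$; I want to show $X \cong {\mathbf{RP}}^2$. First I would use the formula (\ref{mu}) exactly as in the proof of Theorem \ref{lm4}: since $X$ is pure and closed one has $L(X) \le 0$, and $\mu(X) > 1/2$ is equivalent to $2\chi(X) + L(X) > 0$. Because $b_2(X) = 0$ and $X$ is strongly connected (so $b_0(X) = 1$), we get $\chi(X) = 1 - b_1(X) \le 1$, hence $L(X) > -2$, i.e. $L(X) \in \{0, -1\}$. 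The case $L(X) = -1$ is ruled out by the parity-of-degrees argument already given in Case B of Theorem \ref{lm4} (an edge of degree $3$ forces a vertex whose link is a graph with exactly one odd-degree vertex). So $L(X) = 0$, which forces $\chi(X) = 1$, hence $b_1(X) = 0$.

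With $L(X) = 0$ the complex $X$ is a pseudo-surface: every edge has degree $2$, and by Corollary \ref{cor1} the condition $b_1(X) = 0$ forces the link of every vertex to be connected, so $\sing(v) = 0$ for all $v$ and $X$ is a genuine closed surface. A closed surface with $\chi = 1$ is the projective plane, so $X \cong {\mathbf{RP}}^2$. This handles the whole statement, since if instead $\mu(X) \le 1/2$ there is nothing to prove. The only slightly delicate point is making sure the hypothesis $b_2(X) = 0$ (as opposed to the minimal-cycle hypothesis $b_2 = 1$ in Theorem \ref{lm4}) is what flips the Euler-characteristic bound from $\chi \le 1$ toward $\chi = 1$ — but that is exactly what the computation above does, so no new idea beyond the toolkit of \S 2 is needed.

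Alternatively, and perhaps more in the spirit of the paper, one can argue by contradiction: if $\mu(X) > 1/2$ but $X \not\cong {\mathbf{RP}}^2$, one could try to extract from $X$ a minimal cycle. However, $b_2(X) = 0$ means $X$ contains no $2$-cycle at all, so there is no minimal cycle inside $X$ to invoke Theorem \ref{lm4} on; this is why the direct computation is the right route rather than a reduction. So the genuine content is: (i) translate $\mu > 1/2$ into $2\chi + L > 0$; (ii) use $b_2 = 0$ and strong connectedness to bound $\chi \le 1$, forcing $L \in \{0, -1\}$; (iii) eliminate $L = -1$ by the link-parity argument; (iv) conclude $X$ is a pseudo-surface with $b_1 = 0$, hence an actual closed surface, hence ${\mathbf{RP}}^2$.

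I expect step (iii), the elimination of $L(X) = -1$, to be the only place requiring a small separate argument, and it is already available verbatim from Case B of the proof of Theorem \ref{lm4}; everything else is bookkeeping with the Euler characteristic and Corollary \ref{cor1}. One should also double-check that "strongly connected" is being used correctly to guarantee $b_0(X) = 1$ — this follows since a strongly connected pure $2$-complex is in particular path-connected.
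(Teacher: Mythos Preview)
Your proof is correct and follows essentially the same route as the paper's: translate $\mu(X)>1/2$ into $2\chi(X)+L(X)>0$, use $b_2(X)=0$ and connectedness to get $\chi(X)\le 1$ and hence $L(X)\in\{0,-1\}$, eliminate $L(X)=-1$ via the parity argument from Case~B, and then conclude from $L(X)=0$ and $b_1(X)=0$ (via Corollary~\ref{cor1}) that $X$ is a genuine closed surface with $\chi=1$, i.e.\ ${\mathbf{RP}}^2$. Your additional remark that one cannot reduce to Theorem~\ref{lm4} because $b_2(X)=0$ leaves no minimal cycle to extract is a correct and useful observation.
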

\begin{proof} Assume that $\mu(X)>1/2$; this is equivalent (using formula (\ref{mu})) to 
\begin{eqnarray}\label{ineql}
{\mathcal L}(X)\equiv 2\chi(X)+L(X) =2-2b_1(X)+L(X) >0.
\end{eqnarray}
Besides, we have $L(X)\le 0$, since $X$ is closed. The inequality (\ref{ineql}) implies that $L(X)$ can be either $-1$ or $0$. In the proof of Theorem \ref{lm4} 
(see Case B) we showed
that $L(X)=-1$ is impossible. Hence, $L(X)=0$, i.e. $X$ is a pseudo-surface. 
Inequality (\ref{ineql}) now implies that $b_1(X)=0$. 
Using Corollary \ref{cor1} we obtain that $X$ is a genuine surface without singularities. 
Thus $X$ is a compact nonorientable surface 
with $b_1(X)=0$, i.e. 
$X$ is homeomorphic to the projective plane. 
\end{proof}

We shall say that a 2-complex $X$ is {\it  an iterated wedge of projective planes} if it can be represented as a union of subcomplexes 
$X=\cup_{i=1}^n A_i$ where each $A_j$ is homeomorphic to a projective plane and the intersection 
$$A_j\cap (\cup_{i=1}^{j-1}A_i)$$ is a single point set for $j=1, \dots, n$. 

\begin{lemma}\label{lmstrong} Let $X$ be a finite pure connected 2-complex with $b_1(X)=0$. Then $X$ is iterated wedge of its strongly connected components. 
\end{lemma}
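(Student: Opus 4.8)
The plan is to study how the strongly connected components $C_1,\dots,C_n$ of $X$ fit together, to show that their incidence graph is a tree, and then to read off the iterated wedge ordering by rooting that tree.

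I would start with the elementary combinatorics of the decomposition. Since the interior of every edge and every face of $X$ lies in a single connected component of $X-V(X)$, distinct strongly connected components share neither edges nor faces; they can meet only along vertices. Write $m(v)$ for the number of components containing the vertex $v$ (so $m(v)\ge 1$ for all $v$, as $X$ is pure). Then the chain groups split as $C_1(X)=\bigoplus_i C_1(C_i)$ and $C_2(X)=\bigoplus_i C_2(C_i)$, the boundary map respects this splitting, and there are no $3$-simplices, so $Z_2(X)=\bigoplus_i Z_2(C_i)$ and hence $b_2(X)=\sum_i b_2(C_i)$. Counting vertices gives $|V(X)|=\sum_i|V(C_i)|-\sum_v(m(v)-1)$, whence
\[
\chi(X)=\sum_{i=1}^n\chi(C_i)-\sum_{v\in V(X)}(m(v)-1).
\]
Each $C_i$ is connected, so $\chi(C_i)=1-b_1(C_i)+b_2(C_i)$; substituting this, together with $b_0(X)=1$, the hypothesis $b_1(X)=0$, and $b_2(X)=\sum_i b_2(C_i)$, I obtain the identity
\[
\sum_{v}(m(v)-1)+\sum_{i=1}^n b_1(C_i)=n-1.
\]

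Next I would introduce the bipartite incidence graph $B$: its vertex set is the disjoint union of $\{C_1,\dots,C_n\}$ with the set $S=\{v\in V(X):m(v)\ge 2\}$ of vertices shared by at least two components, and $C_i$ is joined to $v$ exactly when $v\in C_i$. Then $|V(B)|=n+|S|$ and $|E(B)|=\sum_{v\in S}m(v)=\sum_v(m(v)-1)+|S|$. The graph $B$ is connected: if the set of components split into two nonempty classes joined by no edge of $B$, the corresponding subcomplexes of $X$ would be disjoint closed subsets covering $X$, contradicting connectedness of $X$. Hence $|E(B)|\ge|V(B)|-1$, i.e. $\sum_v(m(v)-1)\ge n-1$. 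Comparing with the identity above forces $b_1(C_i)=0$ for every $i$ and $|E(B)|=|V(B)|-1$, so $B$ is a tree. In particular no two components share two distinct vertices, as that would produce two distinct paths in $B$ between the corresponding vertices.

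Finally I would extract the ordering. Root $B$ at one of the component-vertices and list the component-vertices $C_{\pi(1)},\dots,C_{\pi(n)}$ in order of nondecreasing distance from the root (breaking ties arbitrarily); the components occur at even distances and the vertices of $S$ at odd distances. For $j\ge 2$ the parent of $C_{\pi(j)}$ in $B$ is a vertex $v\in S$ whose own parent is a component $C_{\pi(i)}$ with $i<j$, so $v\in C_{\pi(j)}\cap\bigl(\bigcup_{\ell<j}C_{\pi(\ell)}\bigr)$. Conversely, any vertex lying in $C_{\pi(j)}$ and in some earlier $C_{\pi(\ell)}$ belongs to $S$ and is adjacent in $B$ to $C_{\pi(j)}$, hence is either the parent $v$ or a child of $C_{\pi(j)}$; in the latter case its neighbours other than $C_{\pi(j)}$ are components lying two steps further from the root than $C_{\pi(j)}$, which come strictly after $C_{\pi(j)}$ in our ordering, contradicting $\ell<j$. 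Therefore $C_{\pi(j)}\cap\bigl(\bigcup_{\ell<j}C_{\pi(\ell)}\bigr)=\{v\}$ for all $j\ge 2$, which is exactly the assertion that $X$ is an iterated wedge of its strongly connected components. The one delicate point in all this is the bookkeeping behind the Euler-characteristic identity — in particular the clean splitting $b_2(X)=\sum_i b_2(C_i)$ and the vertex term $\sum_v(m(v)-1)$; once the identity is established, the dichotomy forcing $B$ to be a tree and the breadth-first extraction of the ordering are routine.
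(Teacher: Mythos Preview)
Your proof is correct, but it follows a genuinely different route from the paper's. The paper argues via Mayer--Vietoris: whenever $X=A\cup B$ with $A,B$ connected unions of strongly connected components (covering all of them without overlap), the intersection $A\cap B$ is a finite set of vertices, and the exact sequence $H_1(X)\to \tilde H_0(A\cap B)\to \tilde H_0(A)\oplus \tilde H_0(B)$ together with $b_1(X)=0$ forces $A\cap B$ to be a single point. One then peels off components one at a time. Your argument instead counts Euler characteristics to obtain the identity $\sum_v(m(v)-1)+\sum_i b_1(C_i)=n-1$, builds the bipartite incidence graph $B$, and observes that connectedness of $B$ gives $\sum_v(m(v)-1)\ge n-1$, forcing equality and hence that $B$ is a tree; the wedge ordering is then read off by a BFS on $B$. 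Your approach is more elementary (no Mayer--Vietoris, only chain-level splittings and graph combinatorics) and yields the bonus conclusion $b_1(C_i)=0$ for every component, at the cost of being longer. The paper's approach is quicker but leaves the inductive step (why the complement $B$ can be taken connected, or how to work around this) somewhat implicit; your tree construction makes the global structure completely explicit.
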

\begin{proof} We shall use the following remark. 
Let $A\cup B=X$ be two connected subcomplexes which are unions of strongly connected components of $X$ and such that every strongly connected component of $X$ is contained either in $A$ or in $B$ but not in both 
$A$ and $B$. Then the intersection $A\cap B$ is a finite set of vertices and using $b_1(X)=0$ in the Mayer - Vietoris sequence implies 
that $\tilde H_0(A\cap B)=0$, i.e. $A\cap B$ is a single point. 

Let $A_1$ be a strongly connected component of $X$. Using the connectedness of $X$ and the previous remark, we may find another strongly connected component 
$A_2$ of $X$ such that $A_1\cap A_2$ is a single point. Inductively, we may find a sequence $A_1, \dots, A_r$ of strongly connected components of $X$ such that 
$X=A_1\cup \dots\cup A_r$ and for any $j$ the intersection 
$$A_j\cap (\cup_{i=1}^{j-1} A_i)$$
is a single point. Hence $X$ is an iterated wedge of its strongly connected components. 
\end{proof}

A version of Lemma \ref{btwo0} with the words \lq\lq strongly connected\rq\rq\, replaced by \lq\lq connected\rq\rq\, reads as follows.

\begin{lemma} \label{btwo01}
Let $X$ be a finite closed connected pure 2-complex with $b_2(X)=0$ and $\mu(X)>1/2$. 
Then $X$ is homeomorphic to an iterated wedge of projective planes. 
\end{lemma}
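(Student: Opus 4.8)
The plan is to reduce this statement to the strongly connected case (Lemma \ref{btwo0}) via the decomposition provided by Lemma \ref{lmstrong}. First I would apply Lemma \ref{lmstrong}: since $X$ is a finite pure connected 2-complex with $b_1(X)=0$, it decomposes as an iterated wedge $X=A_1\cup\dots\cup A_r$ of its strongly connected components, where each $A_j$ meets $\cup_{i<j}A_i$ in a single vertex. The key point is that in an iterated wedge the relevant invariants are additive in the appropriate sense: since each successive intersection is a single point, a Mayer--Vietoris argument (or the homotopy equivalence $X\simeq A_1\vee\dots\vee A_r$) gives $\chi(X)=\sum_j\chi(A_j)-(r-1)$, and the edges of $X$ are partitioned among the $A_j$ with degrees unchanged, so $L(X)=\sum_j L(A_j)$ and likewise $f(X)=\sum_j f(A_j)$. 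Moreover $b_2(X)=\sum_j b_2(A_j)$, so the hypothesis $b_2(X)=0$ forces $b_2(A_j)=0$ for every $j$.

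Next I would combine these additivity relations with the hypothesis $\mu(X)>1/2$, i.e. (via formula (\ref{mu})) $\mathcal L(X)=2\chi(X)+L(X)>0$. Writing $\mathcal L(A_j)=2\chi(A_j)+L(A_j)$, the computation above yields $\mathcal L(X)=\sum_j \mathcal L(A_j)-2(r-1)$. Each $A_j$ is a finite closed strongly connected pure 2-complex (closedness and purity of $X$ pass to each strongly connected component, since the edges and their face-incidences are inherited) with $b_2(A_j)=0$, so Lemma \ref{btwo0} applies to each: either $\mu(A_j)\le 1/2$, equivalently $\mathcal L(A_j)\le 0$, or $A_j$ is homeomorphic to $\mathbf{RP}^2$, in which case $\chi(A_j)=1$, $L(A_j)=0$, hence $\mathcal L(A_j)=2$. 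Thus $\mathcal L(A_j)\le 2$ for every $j$, with equality only when $A_j\cong\mathbf{RP}^2$. Therefore $\mathcal L(X)\le 2r-2(r-1)=2$, and $\mathcal L(X)>0$ combined with $\mathcal L(X)$ being an even integer (as $2\chi+L$ with $L$ even — recall $L(A_j)\le 0$ and each non-$\mathbf{RP}^2$ piece contributes a non-positive even... actually I should argue parity directly) forces $\mathcal L(X)=2$, which in turn forces $\mathcal L(A_j)=2$ for every single $j$. Hence every $A_j$ is homeomorphic to $\mathbf{RP}^2$, so $X$ is an iterated wedge of projective planes, as claimed.

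The step I expect to be the main obstacle is making the additivity claims $\chi(X)=\sum\chi(A_j)-(r-1)$, $L(X)=\sum L(A_j)$, $b_2(X)=\sum b_2(A_j)$ fully rigorous, and in particular pinning down that $\mathcal L(X)\le 2$ actually forces $\mathcal L(X)=2$ rather than $\mathcal L(X)=1$. For the latter one needs that $\mathcal L(X)=2\chi(X)+L(X)$ is even: this follows because $L(X)=\sum_e(2-\deg e)$ and, for a closed complex, $2f(X)=\sum_e \deg e$ together with $\chi(X)=v-e+f$ gives $\mathcal L(X)=2v-2e+2f+\sum_e(2-\deg e)=2v-2e+2f+2e(X_{\mathrm{edges}})\cdot\ldots$ — more cleanly, $\mathcal L(X)\equiv 2\chi(X)+L(X)\pmod 2$ and $L(X)=2|\{\text{edges}\}|-\sum_e\deg e=2e(X)-2f(X)\cdot\binom{?}{}$; since each face has $3$ edges, $\sum_e\deg e=3f(X)$, so $L(X)=2e(X)-3f(X)$, whence $\mathcal L(X)=2\chi(X)+2e(X)-3f(X)=2(v-e+f)+2e-3f=2v-f$, which need not be even in general — but in the minimal-cycle/surface context the relevant pieces all have even $f$. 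I would instead sidestep parity entirely: since each $A_j$ with $\mathcal L(A_j)\le 0$ contributes non-positively and we need $\mathcal L(X)=\sum\mathcal L(A_j)-2(r-1)>0$, if even one $A_j$ were not $\mathbf{RP}^2$ then $\sum_j\mathcal L(A_j)\le 2(r-1)$, giving $\mathcal L(X)\le 0$, a contradiction; hence all $A_j\cong\mathbf{RP}^2$ directly, with no parity argument needed.
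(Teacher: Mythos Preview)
Your approach is essentially the same as the paper's: both reduce to Lemma~\ref{btwo0} via Lemma~\ref{lmstrong} and the additivity identity $\mathcal L(X)=\sum_j\mathcal L(A_j)-2(r-1)$. The paper organises this as an induction on the number of strongly connected components (peeling off one $A$ at a time and writing $\mathcal L(X)=\mathcal L(A)+\mathcal L(B)-2$), whereas you decompose all at once; the content is identical, and your final counting argument (``if even one $A_j$ is not $\mathbf{RP}^2$ then $\sum_j\mathcal L(A_j)\le 2(r-1)$, so $\mathcal L(X)\le 0$'') is exactly what the induction unwinds to.

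One genuine gap: you invoke Lemma~\ref{lmstrong} ``since $X$ is a finite pure connected 2-complex with $b_1(X)=0$'', but $b_1(X)=0$ is not among the hypotheses --- you must derive it first. This is the one-line observation the paper also makes: $\mu(X)>1/2$ is equivalent to $2\chi(X)+L(X)>0$; since $X$ is closed $L(X)\le 0$, and since $b_2(X)=0$ and $X$ is connected $\chi(X)=1-b_1(X)$, forcing $b_1(X)=0$. Once that is in place your argument goes through. (The parity digression is indeed unnecessary; your final paragraph's direct counting is the clean way, and is what the paper does.)
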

\begin{proof} We will use induction on the number of strongly connected components $k$ of $X$. Lemma \ref{btwo0} covers the case $k=1$. 
Assume now that Lemma \ref{btwo01} has been proven for all complexes $X$ with less than $k$ strongly connected components. 

If $X$ has $k$ strongly connected components and $b_2(X)=0$, $\mu(X) >1/2$ then $b_1(X) =0$ (as follows from the inequality (\ref{mu12}) combined with 
$L(X) \le 0$). By Lemma \ref{lmstrong}, $X$ can be represented as a wedge (one-point-union) $X=A\vee B$ with $A$ strongly connected and $B$ having $k-1$ strongly connected components. 
The complexes $A$ and $B$ are pure, closed, connected and $b_2(A)=0=b_2(B)$. 
One has
$L(X)=L(A)+L(B)$ and $\chi(X)=\chi(A)+\chi(B)-1$ which imply (using the notation introduced in the proof of Lemma \ref{btwo0})
$$\mathcal L(X) = \mathcal L(A) +\mathcal L(B)-2.$$
We know by the induction hypothesis that either $\mathcal L(A)\le 0$, or $A$ is homeomorphic to the projective plane and then $\mathcal L(A)=2$. 
Besides, either $\mathcal L(B) \le 0$ or $B$ is an iterated wedge of projective planes and then $\mathcal L(B)=2$. 
Thus, we see that either $\mathcal L(X) \le 0$ (which is equivalent to $\mu(X)\le 1/2$ and contradicts our assumption $\mu(X)>1/2$) or $\mathcal L(X)=2$ and $X$ is an iterated wedge of projective planes. 
\end{proof}

\begin{corollary} {\rm [Compare with Lemma 4.1 from \cite{BHK}]}\label{thm6} 
Let $Z$ be a connected 2-complex with $\tilde \mu(Z)>1/2$. Then $Z$ is homotopy equivalent to a wedge of circles, spheres and projective planes. 
\end{corollary}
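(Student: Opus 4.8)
The plan is to argue by induction on the number of faces $f(Z)$, exploiting the key observation that every subcomplex $S\subset Z$ satisfies $\mu(S)\ge\tilde\mu(Z)>1/2$, so Theorem \ref{lm4}, Corollary \ref{cor2} and Lemma \ref{btwo01} apply to all relevant subcomplexes of $Z$. If $f(Z)=0$ then $Z$ is a connected graph, hence homotopy equivalent to a wedge of circles, and we are done. Assume $f(Z)\ge 1$ and that the statement holds for connected $2$-complexes with fewer faces and with $\tilde\mu>1/2$. If $Z$ has an edge $e$ of degree $1$, then $e$ is a free face of $Z$ (it lies in a single $2$-simplex $\sigma$ and, since $\dim Z\le 2$, in no other simplex), so $Z$ collapses onto $Z'=Z-\{e,\sigma\}$; this $Z'$ is a connected subcomplex with $f(Z')=f(Z)-1$ and $\tilde\mu(Z')\ge\tilde\mu(Z)>1/2$, and $Z\simeq Z'$, so the induction hypothesis finishes this case. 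Hence from now on we may assume that $Z$ has no edge of degree $1$.

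Suppose next that $b_2(Z)\ge 1$. Choose a subcomplex $Z_0\subset Z$ minimal with respect to inclusion among subcomplexes with $b_2\ge 1$; a short homology computation (removing one face lowers the rank of $H_2$ by at most one) shows $b_2(Z_0)=1$, so $Z_0$ is a minimal cycle and $\mu(Z_0)>1/2$. By Theorem \ref{lm4}, $Z_0$ is homeomorphic to one of $Z_1,\dots,Z_4$, and by Corollary \ref{cor2} there is a face $\sigma\subset Z_0$ such that $\partial\sigma$ is null-homotopic in $Z_0-\int(\sigma)$, hence also in the larger complex $Z-\int(\sigma)$. Thus $Z$ is obtained from the subcomplex $Z-\int(\sigma)$ (which is connected, since a path through the open face can be pushed to $\partial\sigma$) by attaching a $2$-cell along a null-homotopic loop, so $Z\simeq (Z-\int(\sigma))\vee S^2$. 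As $Z-\int(\sigma)$ has one fewer face and $\tilde\mu>1/2$, the induction hypothesis applies.

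It remains to treat the case $b_2(Z)=0$ (with no edge of degree $1$). Let $P\subset Z$ be the pure part, i.e.\ the closure of the union of all $2$-faces of $Z$. Every edge contained in a face has degree $\ge 2$, so $P$ is closed, and $b_2(P)=b_2(Z)=0$ because $Z$ is obtained from $P$ by attaching cells of dimension $\le 1$. Write $P=P_1\sqcup\cdots\sqcup P_k$ for its connected components; each $P_i$ is a finite closed connected pure $2$-complex with $b_2(P_i)=0$ and $\mu(P_i)\ge\tilde\mu(Z)>1/2$, so Lemma \ref{btwo01} shows each $P_i$ is homeomorphic to an iterated wedge of projective planes, hence homotopy equivalent to a wedge of copies of ${\mathbf {RP}}^2$. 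Finally $Z$ is obtained from $P$ by attaching the remaining edges and vertices of $Z$ along a finite subgraph of the $1$-skeleton; collapsing a spanning tree of the $1$-skeleton of $Z$ shows that such a space is homotopy equivalent to the wedge of $P_1,\dots,P_k$ with a wedge of circles, which completes the induction.

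I expect the only real obstacle to be the bookkeeping in this last case: checking that $P$ is closed, that each of its components fulfils the hypotheses of Lemma \ref{btwo01}, and that attaching the residual graph contributes nothing beyond circle summands to the homotopy type. The substantive input, namely that a minimal cycle with $\mu>1/2$ always has a face whose boundary is null-homotopic in the complement, is already supplied by Theorem \ref{lm4} and Corollary \ref{cor2}, so the two "reduction" cases are routine once those are in hand.
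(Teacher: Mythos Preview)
Your proof is correct and follows essentially the same strategy as the paper's: the heart of both arguments is to peel off an $S^2$-summand using Corollary \ref{cor2} whenever $b_2\ge 1$, and to invoke Lemma \ref{btwo01} on the closed pure part when $b_2=0$. The only difference is organizational---you induct on $f(Z)$ and handle the reductions (collapsing free faces, separating off the pure part and residual graph) as cases within the induction, whereas the paper first reduces to a strongly connected, closed, pure complex and then inducts on $b_2(Z)$; neither arrangement offers a real advantage over the other.
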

\begin{proof} Without loss of generality we may additionally assume that $Z$ is strongly connected since otherwise $Z$ is homotopy equivalent to a wedge sum 
of its strongly connected components and a number of circles. 
Besides, we may assume that $Z$ is closed, $\partial Z=\emptyset$ (otherwise, we may perform a sequence of collapses to obtain a closed subcomplex $Z'\subset Z$ having the same homotopy type). Similarly, without loss of generality we may assume that $Z$ is pure since otherwise we can apply the arguments given below to its 
pure part. 

We shall act by induction on $b_2(Z)$. 

If $b_2(Z)=0$ and $\tilde \mu(Z)>1/2$ then using Lemma \ref{btwo01} we see that the complex $Z$ is homotopy equivalent to a wedge of circles and projective planes. 

Assume that Corollary \ref{thm6} was proven for all connected 2-complexes $Z$ satisfying $\tilde \mu(Z)>1/2$ and $b_2(Z)<k$. 

Consider a 2-complex $Z$ satisfying $b_2(Z)=k>0$ and $\tilde \mu(Z) >1/2$. 
Find a minimal cycle $Z'\subset Z$. Then the homomorphism $H_2(Z'; \Z)=\Z \to H_2(Z;\Z)$ is an injection. 
Let $\sigma\subset Z'$ be a simplex given by Corollary \ref{cor2}.
Then $Z''= Z- \int(\sigma)$ satisfies $b_2(Z'')=k-1$. Indeed, in the exact sequence
$$0\to H_2(Z'') \to H_2(Z) \to H_2(Z, Z'')\, (=\Z) \stackrel{\partial_\ast}\to H_1(Z'') \to \dots$$
the homomorphism $\partial_\ast=0$ is trivial since the curve $\partial \sigma$ is null-homotopic in $Z''$. 
Since $\tilde \mu(Z'') >1/2$, by induction hypothesis $Z''$ is homotopy equivalent to a wedge of spheres, circles and projective planes. 
Then $Z\simeq Z''\vee S^2$ also has this property. 
\end{proof}

\section{Isoperimetric constants of simplicial complexes}\label{sec22}
Let $X$ be a finite simplicial 2-complex. For a simplicial loop $\gamma: S^1\to X^{(1)}\subset X$ we denote by $|\gamma|$ the length of $\gamma$. If $\gamma$ is null-homotopic,
$\gamma \sim 1$, we denote by $A_X(\gamma)$ {\it the area} of $\gamma$, i.e. the minimal number of triangles in any simplicial filling $V$ for $\gamma$. 
A simplicial filling (or a simplicial Van Kampen diagram) for a loop $\gamma$ is defined as a pair of simplicial maps $S^1\stackrel{i}\to V\stackrel{b}\to X$ 
such that $\gamma=b\circ i$ and the mapping cylinder of $i$ is a disc with boundary $S^1\times 0$, see \cite{BHK}.   

Note that for $X\subset Y$ and $\gamma: S^1\to X$, $\gamma\sim 1$, one has 
$A_X(\gamma)\ge A_Y(\gamma).$

Define the following invariant of $X$
$$I(X)= \inf \left\{\frac{|\gamma|}{A_X(\gamma)}; \quad \gamma: S^1\to X^{(1)}, \gamma\sim 1 \quad \mbox{in $X$}\right\}\, \in \, \R.$$

The inequality $I(X)\ge a$ means that for any null-homotopic loop $\gamma$ in $X$ one has the isoperimetric inequality $A_X(\gamma)\le a^{-1}\cdot |\gamma|$. 
The inequality $I(X) < a$ means that there exists a null-homotopic loop $\gamma$ in $X$ with $A_X(\gamma) >a^{-1}\cdot |\gamma|$, i.e. $\gamma$ is null-homotopic but does not bound a disk of area less than $a^{-1}\cdot |\gamma|$. 

%
%

It is well known that $I(X)>0$ if and only if $\pi_1(X)$ is {\it hyperbolic} in the sense of M. Gromov \cite{Gromov87}.

\begin{example}\label{extorus} {\rm For $X=T^2$ one has  $I(X) =0$. }
\end{example}

\begin{example} {\rm Let $X$ be a finite 2-complex satisfying $\tilde \mu(X)>1/2$. Then by Corollary \ref{thm6} the fundamental group $\pi_1(X)$ is a free product of several copies of $\Z$ and $\Z_2$ and is hyperbolic. Hence $I(X)>0$, compare Theorem \ref{uniform} below. }
\end{example}

\begin{remark}{\rm  Suppose that $\gamma=\gamma_1\cdot \gamma_2$ is the concatenation of two null-homotopic loops $\gamma_1$, $\gamma_2$ in $X$. Then 
$$|\gamma|= |\gamma_1| +|\gamma_2|, \quad \mbox{and}\quad A_X(\gamma) \le A_X(\gamma_1) +A_X(\gamma_2).$$
This implies that 
$$\frac{|\gamma|}{A_X(\gamma)} \ge \frac{|\gamma_1|+|\gamma_2|}{A_X(\gamma_1) + A_X(\gamma_2)} \ge 
\min\left\{\frac{|\gamma_1|}{A_X(\gamma_1)}, \frac{|\gamma_2|}{A_X(\gamma_2)}\right\}$$
and leads to the following observation:

{\it The number $I(X)$ coincides with the infimum of the ratios ${|\gamma|}\cdot{A_X(\gamma)}^{-1}$ where $\gamma$ runs over all null-homotopic simplicial prime loops in $X$, i.e. such that their lifts to the universal cover $\tilde X$ of $X$ are simple. 
Note that any simplicial filling $S^1\stackrel{i}\to V\stackrel{b}\to X$ for a prime loop $\gamma: S^1\to X$ has the property that 
$V$ is a simplicial disc and $i$ is a homeomorphism $i: S^1\to \partial V$.  Hence for prime loops $\gamma$ the area $A_X(\gamma)$ coincides with the minimal number of 2-simplexes in any simplicial spanning disc for $\gamma$. 
}

}\end{remark}

The following Theorem \ref{hyp} 
gives a uniform isoperimetric constant for random complexes $Y\in Y(n, p)$. It is a slightly stronger statement than simply 
hyperbolicity of the fundamental group of $Y$. Theorem \ref{hyp} is implicitly contained in \cite{BHK} although it was not stated there explicitly. 

\begin{theorem}\label{hyp} Suppose that for some $\epsilon>0$ the probability parameter $p$ satisfies
\begin{eqnarray}p\ll n^{-1/2-\epsilon}.\label{babsonrange}
\end{eqnarray}
Then there exists a constant $c_\epsilon>0$ depending only on $\epsilon$ such that a random 2-complex 
$Y\in Y(n, p)$, with probability tending to 1 as $n\to \infty$, has the following property: any subcomplex $Y'\subset Y$ 
satisfies $I(Y')\ge c_\epsilon$  and in particular, the fundamental group $\pi_1(Y')$ is hyperbolic. 
\end{theorem}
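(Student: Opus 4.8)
The plan is to run the first-moment argument of Babson, Hoffman and Kahle \cite{BHK}, but to perform the union bound over \emph{all} combinatorial filling diagrams at once rather than over a fixed loop; this automatically produces the uniform statement over subcomplexes $Y'\subset Y$.

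First I would reformulate the conclusion negatively. Suppose some subcomplex $Y'\subset Y$ has $I(Y')<c_\epsilon$. By the Remark preceding the theorem there is a null-homotopic prime simplicial loop $\gamma$ in $Y'$ of length $\ell=|\gamma|$ whose minimal spanning disc $V$ in $Y'$ has $A:=A_{Y'}(\gamma)$ triangles with $A>c_\epsilon^{-1}\ell$, and $i\colon S^1\to\partial V$ is a homeomorphism. Since a cancelling fold or a degenerate face in $V$ would reduce its area inside $Y'$, the diagram $b\colon V\to Y'$ is reduced and non-degenerate on $2$-faces; moreover one may arrange, exactly as in \S\ref{app}, that $b$ is injective on the set of $2$-faces, so that the image complex $Z=b(V)\subset Y'\subset Y$ has exactly $f(Z)=A$ faces. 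As $Z\subset Y$, the presence of such a configuration inside \emph{some} subcomplex is already visible in $Y$ itself, so it suffices to prove that, a.a.s., $Y$ contains no image of a reduced triangulated disc diagram whose boundary is a prime loop of length $\ell$ and whose area $A$ satisfies $A>c_\epsilon^{-1}\ell$.

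Now comes the counting. A triangulated $2$-disc with $A$ faces and boundary length $\ell$ has, by Euler's formula, $E=(3A+\ell)/2$ edges and $v(V)=1+(A+\ell)/2$ vertices. Two deterministic inputs control the number of diagrams: (i) the number of combinatorial types of triangulated $2$-discs with $A$ faces is at most $C^{A}$ for an absolute constant $C$, since these are planar maps with $O(A)$ edges; (ii) a non-degenerate simplicial map $b\colon V\to\Delta_n$ is determined by the images of its $v(V)$ vertices, hence occurs in at most $n^{v(V)}$ ways. Therefore, with the probability over $Y\in Y(n,p)$,
\[
\mathbb P(\text{a bad diagram exists})\ \le\ \sum_{\ell\ge 3}\ \sum_{A> c_\epsilon^{-1}\ell}\ C^{A}\cdot n^{\,1+(A+\ell)/2}\cdot p^{\,A},
\]
where $p^{A}$ is the probability that the $A$ pairwise distinct image faces all lie in $Y$. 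Substituting $p\le n^{-1/2-\epsilon}$ (legitimate for large $n$ since $p\ll n^{-1/2-\epsilon}$), the power of $n$ becomes $1+\ell/2-\epsilon A$, which by $\ell< c_\epsilon A$ is at most $1+(c_\epsilon/2-\epsilon)A$.

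Finally I would fix the constant. Set $A_0=\lceil 4/\epsilon\rceil+1$ and choose $c_\epsilon=\min\{\epsilon,\,3/A_0\}$; then every bad diagram automatically satisfies $A>c_\epsilon^{-1}\ell\ge 3c_\epsilon^{-1}\ge A_0>4/\epsilon$, so there is no separate "short loop" case. With $c_\epsilon\le\epsilon$ the inner $\ell$-sum (at most $A$ terms) is bounded by $A\,C^{A}n^{1-(\epsilon/2)A}$, and for $n\ge C^{4/\epsilon}$ one has $C^{A}\le n^{(\epsilon/4)A}$ uniformly in $A$, so the whole double sum is at most $\sum_{A\ge A_0}A\,n^{1-(\epsilon/4)A}\to 0$. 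Hence a.a.s.\ no bad diagram exists, i.e.\ $I(Y')\ge c_\epsilon$ for every subcomplex $Y'\subset Y$; and $I(Y')>0$ gives that $\pi_1(Y')$ is Gromov hyperbolic by the quoted characterisation. The main obstacle is exactly the deterministic input: the exponential bound (i) on the combinatorial types of triangulated discs, and above all the assertion that a minimal (reduced) filling may be taken injective on $2$-faces, so that the exponent of $p$ is linear in the area $A$ rather than sublinear. This is the delicate point and is where the work of \S\ref{app} is concentrated; if one only obtains $f(Z)\ge\delta A$ for a fixed $\delta=\delta(\epsilon)>0$, the computation above goes through verbatim with $\epsilon$ replaced by a smaller positive constant.
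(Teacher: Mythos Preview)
There is a genuine gap in your argument. The crucial step is the claim that a minimal spanning disc $b:V\to Y'$ can be taken ``injective on the set of $2$-faces, exactly as in \S\ref{app}'', so that the exponent of $p$ in your first-moment bound equals the full disc area $A$. But \S\ref{app} proves no such thing: what is obtained there (via Ronan) is only that a minimal disc can be taken without \emph{foldings}, meaning no two $2$-simplexes of $V$ sharing an \emph{edge} have the same image. Two simplexes sharing only a vertex, or disjoint, may very well map to the same face, and indeed the entire multiplicity filtration $X_i=\{\sigma: |b^{-1}(\sigma)|\ge i\}$ in \S\ref{secthm10} is introduced precisely to cope with this. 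Minimal reduced diagrams with arbitrarily high multiplicity are commonplace (for a single-relator example think of $\langle a,b\mid [a,b]\rangle$, where a minimal disc for $[a,b]^k$ uses the unique $2$-cell $k^2$ times).

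Your proposed fallback ``$f(Z)\ge\delta A$ for some fixed $\delta=\delta(\epsilon)>0$'' does not rescue the computation either. Since your vertex count $n^{v(V)}=n^{1+(A+\ell)/2}$ is taken on the disc $V$ rather than on the image $Z$, substituting $p\le n^{-1/2-\epsilon}$ into $C^A\,n^{v(V)}p^{\delta A}$ leaves an exponent
\[
1+\frac{\ell}{2}+\frac{A}{2}-\Bigl(\frac12+\epsilon\Bigr)\delta A
\;=\;1+\frac{\ell}{2}+\frac{(1-\delta)A}{2}-\epsilon\delta A,
\]
which for $\delta<1/(1+2\epsilon)$ tends to $+\infty$ with $A$; so you would need $\delta$ arbitrarily close to $1$ as $\epsilon\to 0$, which is again the near-injectivity you have not established. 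The paper's route is different in kind: it first proves the purely deterministic Theorem~\ref{uniform} (a uniform bound $I(X)\ge C_\epsilon$ for every finite $X$ with $\tilde\mu(X)\ge\frac12+\epsilon$, obtained via the layer decomposition $X_\lambda$, the singular surface $\Sigma$, and the key inequality of Lemma~\ref{lpartial}); then it uses Lemma~\ref{containment} to show that a.a.s.\ every subcomplex of $Y$ with at most $44^3C_\epsilon^{-2}$ faces has $\tilde\mu\ge\frac12+\epsilon$; and finally it invokes Gromov's local-to-global principle (Theorem~\ref{localtoglobal}) to pass to arbitrary $Y'\subset Y$. The deterministic isoperimetric bound is the heart of the matter and is not bypassed by a direct union bound over filling diagrams.
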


We give a detailed proof of Theorem \ref{hyp} in the Appendix. 


We state now two immediate corollaries of Theorem \ref{hyp}.

\begin{corollary}\label{cortorus} Under the assumption (\ref{babsonrange}), a random 2-complex $Y\in Y(n,p)$ contains no subcomplexes 
homeomorphic to the torus $T^2$, a.a.s. 
\end{corollary}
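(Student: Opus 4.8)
The plan is to derive Corollary \ref{cortorus} from Theorem \ref{hyp} by a soft argument: the torus $T^2$ has a vanishing isoperimetric constant (this is exactly Example \ref{extorus}), so if some subcomplex $Y'\subset Y$ were homeomorphic to $T^2$ we would get $I(Y')=0$, contradicting the uniform lower bound $I(Y')\ge c_\epsilon>0$ guaranteed by Theorem \ref{hyp}. So the real content is just quoting the theorem plus a fixed topological fact.

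More carefully, first I would invoke Theorem \ref{hyp}: under the hypothesis $p\ll n^{-1/2-\epsilon}$ there is a constant $c_\epsilon>0$ so that, with probability tending to $1$, \emph{every} subcomplex $Y'\subset Y$ satisfies $I(Y')\ge c_\epsilon$. Work inside this asymptotically almost sure event. Now suppose, for contradiction, that $Y$ contains a subcomplex $Y'$ homeomorphic to the torus $T^2$. Then I would argue that $I(Y')=0$. One way: a flat torus has arbitrarily long null-homotopic simplicial loops $\gamma_k$ whose minimal filling area grows quadratically in $|\gamma_k|$ — concretely, take a simplicial torus, subdivide it $k$ times, and let $\gamma_k$ be the boundary of a ``square'' sub-region of side $\sim k$; then $|\gamma_k|\sim k$ while $A_{Y'}(\gamma_k)\sim k^2$ since any van Kampen diagram must cover that region, so $|\gamma_k|/A_{Y'}(\gamma_k)\to 0$. (Alternatively one simply cites Example \ref{extorus}, which records precisely $I(T^2)=0$, since $I$ is a homeomorphism invariant in the sense that it depends only on the simplicial structure up to subdivision, and a quadratic isoperimetric function is the standard fact for $\Z^2=\pi_1(T^2)$, a non-hyperbolic group.) This contradicts $I(Y')\ge c_\epsilon>0$.

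Hence, on the asymptotically almost sure event provided by Theorem \ref{hyp}, no subcomplex of $Y$ is homeomorphic to $T^2$, which is the assertion of the corollary.

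The only point requiring a little care — the ``main obstacle'', such as it is — is the justification of $I(Y')=0$ for a \emph{simplicial} torus $Y'$, i.e.\ producing the explicit sequence of loops with $|\gamma_k|/A_{Y'}(\gamma_k)\to 0$, or else cleanly reducing to the known quadratic isoperimetric inequality for $\Z^2$ (using that $I(X)>0$ iff $\pi_1(X)$ is Gromov hyperbolic, as recalled just before Example \ref{extorus}, together with the fact that $\Z^2$ is not hyperbolic). Since this is exactly the content already isolated as Example \ref{extorus}, in the write-up I would simply cite it and let the one-line contradiction with Theorem \ref{hyp} finish the proof.
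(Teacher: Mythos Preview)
Your proposal is correct and matches the paper's proof essentially verbatim: the paper's argument is precisely the one-liner you arrive at, namely that Example \ref{extorus} gives $I(S)=0$ for any simplicial torus $S$, which contradicts the uniform bound $I(Y')\ge c_\epsilon>0$ from Theorem \ref{hyp}. Your side remark about ``subdividing $k$ times'' is slightly off (you must work inside the \emph{fixed} triangulation $Y'$, lifting to the universal cover to get long loops with quadratic area), but since you correctly note the clean route via non-hyperbolicity of $\Z^2$ and ultimately just cite Example \ref{extorus}, there is no gap.
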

\begin{proof} By Example \ref{extorus} any torus $S$ satisfies $I(S)=0$ and the inclusion $S\subset Y$ would contradict Theorem \ref{hyp}. 
\end{proof}

\begin{corollary}\label{corsphere} Under the assumption (\ref{babsonrange}), a random 2-complex $Y\in Y(n,p)$ contains no subcomplex 
having more than 
$2\epsilon^{-1}$ faces which is
homeomorphic to the sphere $S^2$, a.a.s. 
\end{corollary}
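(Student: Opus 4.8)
The plan is to derive Corollary \ref{corsphere} from the uniform isoperimetric bound of Theorem \ref{hyp}, exactly as Corollary \ref{cortorus} was derived from Example \ref{extorus}. Suppose, for contradiction, that $Y$ contains a subcomplex $S\subset Y$ homeomorphic to $S^2$ having $f(S)>2\epsilon^{-1}$ faces. The key point is to produce a null-homotopic loop in $S$ (hence in $Y$) whose length is small relative to its area, contradicting $I(Y')\ge c_\epsilon$ for the subcomplex $Y'=S$. Since $S$ is a triangulated $2$-sphere, pick any edge $e$ of $S$ and let $\gamma$ be the boundary loop of one of the two triangles $\sigma$ incident to $e$; then $|\gamma|=3$. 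Because $S\cong S^2$, the loop $\gamma=\partial\sigma$ is null-homotopic in $S$.

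The heart of the argument is the lower bound on the area $A_S(\gamma)$. A simplicial filling of the prime loop $\gamma=\partial\sigma$ is a simplicial disc $V$ with $\partial V=S^1$ mapping to $S$; since $S$ is a closed surface and $\gamma$ bounds the single triangle $\sigma$ on one side, any spanning disc for $\gamma$ in $S$ must consist either of $\sigma$ alone or of the complementary disc $S-\mathrm{Int}(\sigma)$, which has $f(S)-1$ faces. The minimal such filling is $\sigma$ itself, giving $A_S(\gamma)=1$, which is useless; so instead I would choose $\gamma$ to be a loop that bounds a disc with \emph{many} faces on \emph{both} sides. Concretely: since $S\cong S^2$ has $f(S)$ faces, one can find a simple closed edge-loop $\gamma$ in $S^{(1)}$ separating $S$ into two discs $D_1,D_2$ with $f(D_1),f(D_2)\ge f(S)/2 - O(1) > \epsilon^{-1}$ faces each (for instance, grow a sub-disc face by face until it and its complement each carry at least half the faces; the separating curve has length at most $O(\sqrt{f(S)})$, but even the crude bound that its length is some fixed constant is not automatic). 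This is where the real work lies: controlling $|\gamma|$ against $A_S(\gamma)=\min(f(D_1),f(D_2))>\epsilon^{-1}$ forces $|\gamma|/A_S(\gamma) < \epsilon\cdot|\gamma|^{?}$, and we need this to beat $c_\epsilon$.

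The cleaner route, and the one I expect the paper takes, uses the systolic/Cheeger estimates advertised in the abstract rather than an ad hoc separating-curve construction. The plan is: invoke an isoperimetric inequality for triangulated $2$-spheres stating that any triangulated $S^2$ with $f$ faces contains a null-homotopic loop $\gamma$ of length $|\gamma|$ with $A_S(\gamma)\ge \frac{f}{2}$ and $|\gamma|\le C$ for an absolute constant — or more precisely, that $I(S)\le \frac{2|\gamma|}{f}$. Then for $f(S)>2\epsilon^{-1}$ we get $I(S) < \epsilon\cdot|\gamma| $, and choosing the loop appropriately (length bounded independently of $f$, e.g. the boundary of a single triangle after a suitable combinatorial modification, or using the bound on the first systole) yields $I(S)< c_\epsilon$ once $n$, and hence the relevant constants, are arranged; this contradicts Theorem \ref{hyp}.

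The main obstacle is precisely the quantitative isoperimetry on the sphere: one must exhibit, for every triangulation of $S^2$ with more than $2\epsilon^{-1}$ faces, a null-homotopic loop realizing a ratio $|\gamma|/A_S(\gamma)$ strictly below $c_\epsilon$, with the length bounded by a \emph{constant} (not growing with $f$) so that the area term $f(S)/2>\epsilon^{-1}$ does the work. I would handle this by taking $\gamma=\partial\sigma$ for a face $\sigma$ whose complementary disc is forced to be the minimal filling: since $A_S(\partial\sigma)=\min\{1,\,f(S)-1\}=1$ in a genuine sphere, one instead applies the argument to a subcomplex obtained by a controlled collapse, or simply notes that the sphere $S$ itself, viewed as a $2$-cycle, must have $\mu(S)=v/f$ and combines this with the classification results; but the decisive input remains a Cheeger-type inequality for simplicial $2$-spheres bounding $I(S)$ from above by $2/f(S)$ up to the (constant) length of an optimal test loop. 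Everything else — the contradiction with $I(Y')\ge c_\epsilon$, the a.a.s. passage — is immediate from Theorem \ref{hyp}.
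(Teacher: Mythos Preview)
Your proposal contains a genuine gap, and it misses both key steps of the paper's argument.

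First, the isoperimetric step. You correctly observe that for $\gamma=\partial\sigma$ one has $A_S(\gamma)=1$ because $\sigma$ itself fills $\gamma$, and you then abandon this loop. The fix is not to look for a cleverer loop in $S$, but to \emph{change the ambient complex}: apply Theorem~\ref{hyp} to the subcomplex $S'=S-\mathrm{Int}(\sigma)$ rather than to $S$. In $S'$ the filling by $\sigma$ is no longer available, and since $S'$ is a triangulated disc with boundary $\gamma=\partial\sigma$, a degree argument forces every simplicial filling of $\gamma$ in $S'$ to hit each face of $S'$, so $A_{S'}(\gamma)=f(S)-1$. Then Theorem~\ref{hyp} gives
\[
\frac{3}{f(S)-1}=\frac{|\gamma|}{A_{S'}(\gamma)}\ge c_\epsilon,
\]
hence $f(S)\le 3c_\epsilon^{-1}+1$. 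Your attempts via separating curves or Cheeger constants never produce a loop of \emph{bounded} length with large area; the ``remove a face'' trick does this in one line.

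Second, and more seriously, the bound $3c_\epsilon^{-1}+1$ is \emph{not} the claimed bound $2\epsilon^{-1}$, and Theorem~\ref{hyp} alone cannot give $2\epsilon^{-1}$ since $c_\epsilon$ is an unspecified constant. The paper uses the first step only to reduce to \emph{finitely many} isomorphism types of spheres, and then invokes the containment Lemma~\ref{containment}: for a triangulated sphere $\tilde\mu(S)=\tfrac12+\tfrac{2}{f(S)}$, so under $p\ll n^{-1/2-\epsilon}$ any sphere with $\tilde\mu(S)<\tfrac12+\epsilon$, i.e.\ $f(S)>2\epsilon^{-1}$, is a.a.s.\ not embeddable. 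Your proposal never mentions $\tilde\mu$ or the containment lemma, so even if your isoperimetric sketch were completed it would not reach the stated conclusion.
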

\begin{proof} Suppose that a random 2-complex $Y\in Y(n,p)$ contains a sphere $S\subset Y$ as a subcomplex. 

Firstly we show that the number of faces $f(S)$ is bounded above by a constant depending on $\epsilon$. Indeed, removing a 2-simplex from $S$ gives a disc 
$S'$ and by Theorem \ref{hyp} the isoperimetric ratio of the boundary curve $\gamma$ of this disc must satisfy 
$$\frac{|\gamma|}{A_{S'}(\gamma)} = \frac{3}{f(S)-1} \ge c_\epsilon$$
where $c_\epsilon>0$ is the constant of Theorem \ref{hyp}. Thus we obtain 
\begin{eqnarray}\label{uppersphere}f(S) \le 3\cdot c_\epsilon^{-1} +1.\end{eqnarray} 

Secondly, we observe that there are finitely many isomorphism types of triangulations of the sphere $S^2$ satisfying (\ref{uppersphere}) and we therefore may apply Theorem 15 from \cite{CCFK} or Lemma \ref{containment}. By Theorem 27 and formula (8) from \cite{CCFK} we obtain
$\tilde \mu(S) =\frac{1}{2} + \frac{2}{f(S)}.$
If $\tilde \mu(S) < 1/2 +\epsilon$ then (by Lemma \ref{containment}) $S$ cannot be embedded into a random 2-complex $Y\in Y(n,p)$, a.a.s. Hence, for the triangulated spheres $S$ embeddable into $Y$ 
(a.a.s.) we must have 
$\tilde \mu(S) = \frac{1}{2} + \frac{2}{f(S)} \ge \frac{1}{2} + \epsilon,$
implying $f(S) \le 2\cdot \epsilon^{-1}$. 
\end{proof}

Let $X$ be a simplicial 2-complex. By $\rm {sys}(X)$ we denote {\it the systole} of $X$, which is defined as the length of the shortest homotopically nontrivial simplicial loop in $X$. 
By Theorem 1.3 from \cite{KRS} (see also Theorem 6.7.A from \cite{Gromov83}) one has 
\begin{eqnarray}\label{systole} {\rm {sys}}(X) < 6\cdot A(X)^{1/2},
\end{eqnarray}
assuming that the fundamental group of $X$ is not a free group.  Here $A(X)$ denotes the number of $2$-simplexes in $X$. 
To deduce (\ref{systole}) from  \cite{KRS}, Theorem 1.3 we apply this theorem to $X$ 
equipped with a piecewise flat metric $g$ in which each edge has length 1 and each 2-simplex has area $\sqrt{3}/4$. 
Let ${\rm {sys}}_g(X)$ and $A_g(X)$ denote the systole and the area of $X$ with respect to this metric. Then 
$A_g(X)=A(X)\cdot \sqrt{3}/4$ and elementary estimates show that
$${\rm {sys}}_g(X)\le {\rm {sys}}(X) \le 2\cdot{\rm {sys}}_g(X).$$
Hence, 
$$\frac{{\rm {sys}}^2(X)}{A(X)} \le \frac{\sqrt{3}\left({\rm {sys}}_g(X)\right)^2}{A_g(X)} \le \sqrt{3}\cdot 12 < 36$$
which implies (\ref{systole}).

\begin{corollary}\label{noprojplanes}
Under the assumption (\ref{babsonrange}), a random 2-complex $Y\in Y(n,p)$ contains no subcomplex
having more than 
$\epsilon^{-1}$ faces which is
homeomorphic to the projective plane ${\mathbf {RP}}^2$, a.a.s. 
\end{corollary}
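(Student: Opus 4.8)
The plan is to follow the scheme of Corollary~\ref{corsphere}. The new feature is that removing a $2$-simplex from a projective plane produces a M\"obius band, not a disc, so the elementary isoperimetric estimate used there is unavailable; instead I would combine the systolic inequality~(\ref{systole}) with a passage to the orientation double cover.

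Suppose $S\subset Y$ is a subcomplex homeomorphic to ${\mathbf {RP}}^2$; the first task is to bound $f(S)$ by a constant depending only on $\epsilon$. Since $\pi_1({\mathbf {RP}}^2)=\Z_2$ is not free, (\ref{systole}) gives $\sys(S)<6\,f(S)^{1/2}$. Realize the systole by a simple essential simplicial loop $\gamma$, so $|\gamma|=\sys(S)$; as the square of every element of $\Z_2$ is trivial, the loop $\gamma^2$ is null-homotopic in $S$. Applying Theorem~\ref{hyp} to the subcomplex $Y'=S$ gives $I(S)\ge c_\epsilon$, whence
$$A_S(\gamma^2)\ \le\ c_\epsilon^{-1}|\gamma^2|\ =\ 2c_\epsilon^{-1}\sys(S)\ <\ 12\,c_\epsilon^{-1}f(S)^{1/2}.$$
The heart of the argument is the reverse estimate $A_S(\gamma^2)\ge f(S)$. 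To obtain it, let $\pi\colon S^2\to S$ be the orientation double cover, with free deck involution $\sigma$, triangulated so that $\pi$ is simplicial; then $f(S^2)=2f(S)$. A lift $\widetilde\gamma$ of $\gamma$ is a simple arc from a vertex $x_0$ to $\sigma x_0$, and because $\sigma$ has no fixed point the arcs $\widetilde\gamma$ and $\sigma\widetilde\gamma$ meet only at $\{x_0,\sigma x_0\}$; hence $\delta=\widetilde\gamma\cup\sigma\widetilde\gamma$ — a lift of the null-homotopic loop $\gamma^2$, of combinatorial length $2\sys(S)$ — is a simple closed curve in $S^2$. It separates $S^2$ into two discs which $\sigma$ interchanges (it cannot fix either of them, since a self-map of a disc has a fixed point), so each of these discs has exactly $f(S)$ faces. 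Now any simplicial filling of $\gamma^2$ in $S$ lifts to a simplicial filling of $\delta$ in $S^2$ with the same number of faces, and we may assume it is non-degenerate on $2$-faces; its image is then a $2$-dimensional subcomplex of $S^2$ in which $\delta$ is null-homotopic, so it must contain one of the two complementary discs entirely — otherwise the image would miss a face from each of the two discs, and removing an interior point of each such face we would see $\delta$ as the essential core curve of a twice-punctured $2$-sphere in which it is null-homotopic, a contradiction. Hence $A_S(\gamma^2)\ge f(S)$, and combining this with the display gives $f(S)<144\,c_\epsilon^{-2}=:C_\epsilon$.

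Finally, there are only finitely many isomorphism types of triangulations of ${\mathbf {RP}}^2$ with at most $C_\epsilon$ faces. Such a triangulation is closed, so $L=0$, and $\chi=1$; by formula~(\ref{mu}) and the results of~\cite{CCFK} one has $\tilde\mu(S)=\tfrac12+\tfrac1{f(S)}$. Let $\mathcal F$ be the finite family of triangulated projective planes with $\epsilon^{-1}<f\le C_\epsilon$ faces; then $\beta:=\max_{X\in\mathcal F}\tilde\mu(X)<\tfrac12+\epsilon$, so $p\ll n^{-1/2-\epsilon}\ll n^{-\beta}$, and Lemma~\ref{containment}(1) shows that a random $Y\in Y(n,p)$ contains no member of $\mathcal F$, a.a.s. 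Together with the bound $f(S)\le C_\epsilon$ this proves that $Y$ contains no subcomplex homeomorphic to ${\mathbf {RP}}^2$ with more than $\epsilon^{-1}$ faces, a.a.s. The step I expect to be the main obstacle is the lower bound $A_S(\gamma^2)\ge f(S)$, and within it the (standard but delicate) points that the systole is realized by a simple loop, that freeness of $\sigma$ forces $\delta$ to be simple with balanced complementary discs, and that any filling of $\delta$ must sweep out a full complementary disc; the systolic input and the $\tilde\mu$-calculation feeding into Lemma~\ref{containment} are routine adaptations of the proof of Corollary~\ref{corsphere}.
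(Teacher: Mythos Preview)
Your proof is correct and follows the same route as the paper: combine the systolic inequality~(\ref{systole}) with the isoperimetric bound from Theorem~\ref{hyp} applied to the loop $\gamma^2$ to obtain $f(S)\le 144\,c_\epsilon^{-2}$, then invoke $\tilde\mu(S)=\tfrac12+\tfrac1{f(S)}$ and Lemma~\ref{containment} to cut down to $f(S)\le\epsilon^{-1}$. The paper simply asserts that the area of $\gamma^2$ in $S$ equals $f(S)$, whereas you supply a full justification of the needed inequality $A_S(\gamma^2)\ge f(S)$ via the orientation double cover; your argument for that step is sound and fills in what the paper leaves to the reader.
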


\begin{proof} 
Suppose that a random 2-complex $Y\in Y(n,p)$ contains a subcomplex $P\subset Y$ homeomorphic to the projective plane.  
By Theorem \ref{hyp} the isoperimetric constant $I(P)$ satisfies $I(P)\ge c_\epsilon$. Consider the simple simplicial curve $\gamma$ in $P$ which is not null-homotopic and has length equal to 
$\sys(P)$. The twice passed curve $\gamma$ is null-homotopic in $P$ and the area of $\gamma^2$ equals $A(P)$. Thus we have 
\begin{eqnarray}\label{syst1}
\frac{2\sys(P)}{A(P)} \ge c_\epsilon.
\end{eqnarray}
Applying (\ref{systole}) 
 we obtain $\sys(P)^2 \le 36\cdot A(P)$ and combining with (\ref{syst1}) we get $A(P) \le 144\cdot c_\epsilon^{-2}$. 

There are finitely many isomorphism types of triangulations of the real projective plane ${\mathbf {RP}}^2$ with at most $144 c_\epsilon^{-2}$ faces. 
By Theorem 27 and formula (8) from \cite{CCFK} we obtain
$\tilde \mu(S) =\frac{1}{2} + \frac{1}{f(S)}.$
If $\tilde \mu(S) < 1/2 +\epsilon$ then (by Lemma \ref{containment}) $S$ cannot be embedded into a random 2-complex $Y\in Y(n,p)$, a.a.s. Hence, for the triangulated real projective plane $S$ embeddable into $Y$ 
(a.a.s.) we must have 
$\tilde \mu(S) = \frac{1}{2} + \frac{1}{f(S)} \ge \frac{1}{2} + \epsilon,$
implying $f(S) \le \epsilon^{-1}$. 
\end{proof}

\section{Torsion in fundamental groups of random 2-complexes}

One of the main results of this section is Theorem \ref{ordertwo} which states that the fundamental group of a random 2-complex has a nontrivial element of order $2$ assuming that $n^{-3/5} \ll p\ll n^{-1/2-\epsilon}$. The proof of Theorem \ref{ordertwo} uses Theorem \ref{hyp} together with Theorem \ref{family} which is stated and proven below. 

The second main result presented in this section is Theorem \ref{orderthree} which claims that for any odd prime $m\ge 3$ (assuming that $p\ll n^{-1/2-\epsilon}$)
the fundamental group of any subcomplex 
$Y'\subset Y$ of a random complex $Y\in Y(n,p)$  has no $m$-torsion, a.a.s. The proof uses Theorem \ref{hyp} as well as the inequalities for systoles of Moore surfaces.

\subsection{The numbers of embeddings}

Consider two 2-complexes $S_1\supset S_2$. Denote by $v_i$ and $f_i$ the numbers of vertices and faces of $S_i$. We have $v_1\ge v_2$ and $f_1\ge f_2$.
We will assume that $f_1>f_2$. 

Let $\mu(S_1, S_2)$ denote the ratio $$\mu(S_1, S_2)= \frac{v_1-v_2}{f_1-f_2}.$$

If $\mu(S_1)<\mu(S_2)$ then 
\begin{eqnarray}\label{one}\mu(S_1, S_2) <\mu(S_1)<\mu(S_2).\end{eqnarray}
If $\mu(S_1) >\mu(S_2)$ then 
\begin{eqnarray}\label{two}\mu(S_1, S_2) >\mu(S_1)>\mu(S_2).\end{eqnarray}
These two observations can be summarised by saying that {\it $\mu(S_1)$ always lies in the interval connecting $\mu(S_2)$ and $\mu(S_1, S_2)$. }

One has the following formula
\begin{eqnarray}\label{24}
\mu(S_1, S_2) = \frac{1}{2} +\frac{2(\chi(S_1)-\chi(S_2)) + L(S_1)-L(S_2)}{2(f_1-f_2)},
\end{eqnarray}
which follows from the equation $2v_i = f_i + 2\chi(S_i) +L(S_i)$; the latter is equivalent to (\ref{mu}). 

\begin{remark} {\rm Let $S_2$ be a pseudo-surface and
$2\chi(S_1,S_2)+L(S_1) <0$. Then $\mu(S_1, S_2) <1/2$. }
\end{remark}
\noindent
Note that $L(S_2)=0$ since $S_2$ is a pseudo-surface. This remark applies to the case when $S_1$ is a union of a pseudo-surface $S_2$ and a number of simplicial discs.

\begin{theorem}\label{thm1} Let $S_1\supset S_2$ be two fixed 2-complexes 
and\footnote{The assumption (\ref{between}) is meaningful iff $\mu(S_1, S_2) < \tilde \mu(S_2) \le \mu(S_2)$ which, as follows from (\ref{one}) and (\ref{two}), 
implies that $\mu(S_1)<\mu(S_2)$. Thus, if Theorem \ref{thm1} is applicable, then $\mu(S_1)<\mu(S_2)$. }
\begin{eqnarray}\label{between}
n^{-\tilde \mu(S_2)} \ll p \ll n^{-\mu(S_1, S_2)}.
\end{eqnarray}
Then the number of simplicial embeddings of $S_1$ into a random 2-complex $Y\in Y(n, p)$ is smaller than the number of simplicial embeddings of $S_2$ into $Y$, a.a.s. 
In particular, under the assumptions (\ref{between}), with probability tending to one, there exists a simplicial embedding $S_2\to Y$ which does not extend to an embedding $S_1\to Y$. 
\end{theorem}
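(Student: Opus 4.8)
The plan is to bound the two counts of embeddings by their expectations and apply a first/second-moment comparison. Let $X_1$ and $X_2$ denote the numbers of simplicial embeddings of $S_1$ and $S_2$, respectively, into $Y \in Y(n,p)$. The expected number of embeddings of a fixed complex $S$ with $v$ vertices and $f$ faces is of order $n^v p^f$ (up to a constant depending only on $S$, coming from the falling factorial $n(n-1)\cdots(n-v+1)$ and the automorphisms of $S$); so $\mathbb{E}(X_i) = \Theta(n^{v_i} p^{f_i})$. First I would show that, under the right-hand inequality in (\ref{between}), $\mathbb{E}(X_1)$ is much smaller than any fixed power threshold we need, and in fact that $X_1$ concentrates near its expectation by a second-moment argument; and under the left-hand inequality $\mathbb{E}(X_2)\to\infty$ with $X_2$ likewise concentrated. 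The decisive point is the ratio
\begin{eqnarray}\label{propratio}
\frac{\mathbb{E}(X_1)}{\mathbb{E}(X_2)} = \Theta\!\left(n^{v_1-v_2} p^{f_1-f_2}\right) = \Theta\!\left(\left(n\, p^{\mu(S_1,S_2)}\right)^{\,v_1-v_2/(v_1-v_2)}\right),
\end{eqnarray}
more transparently written as $\Theta\big((n^{1/(f_1-f_2)})^{\,v_1-v_2} \cdot p\big)^{f_1-f_2} = \Theta\big(n^{v_1-v_2}p^{f_1-f_2}\big)$, which tends to $0$ precisely when $p \ll n^{-(v_1-v_2)/(f_1-f_2)} = n^{-\mu(S_1,S_2)}$. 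Hence the upper inequality of (\ref{between}) forces $\mathbb{E}(X_1)/\mathbb{E}(X_2) \to 0$.

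Next I would convert this expectation comparison into an a.a.s.\ statement about the random variables themselves. For $X_2$ I would invoke Lemma \ref{containment}(2): since $p \gg n^{-\tilde\mu(S_2)}$, a.a.s.\ $Y$ contains at least one copy of $S_2$, and a standard second-moment estimate (using that the dominant contribution to $\mathbb{E}(X_2^2)$ comes from pairs of embeddings meeting in a subcomplex $T \subseteq S_2$ with $\mathbb{E}$-weight controlled by $\tilde\mu(S_2)$, exactly the mechanism underlying Lemma \ref{containment}) gives $X_2 \sim \mathbb{E}(X_2)$ a.a.s., so in particular $X_2 \ge \tfrac{1}{2}\mathbb{E}(X_2)$ a.a.s. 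For $X_1$ I would use Markov's inequality: $\mathbb{P}(X_1 \ge \tfrac{1}{2}\mathbb{E}(X_2)) \le 2\mathbb{E}(X_1)/\mathbb{E}(X_2) \to 0$. Combining the two, a.a.s.\ $X_1 < \tfrac12 \mathbb{E}(X_2) \le X_2$, which is the assertion. The existence of an embedding $S_2 \to Y$ not extending to $S_1 \to Y$ then follows formally: every embedding of $S_1$ restricts to an embedding of $S_2$, and distinct extensions of a fixed $S_2$-embedding are distinct $S_1$-embeddings, so if every $S_2$-embedding extended we would have $X_2 \le X_1$.

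I expect the main obstacle to be the second-moment bound for $X_2$, i.e.\ verifying $\mathbb{E}(X_2^2) = (1+o(1))\mathbb{E}(X_2)^2$. This requires checking that, for every proper subcomplex $T \subsetneq S_2$, the overlap term $n^{2v_2 - v(T)} p^{2f_2 - f(T)}$ is $o(\mathbb{E}(X_2)^2) = o(n^{2v_2}p^{2f_2})$, i.e.\ $n^{-v(T)} p^{-f(T)} = o(1)$, equivalently $p \gg n^{-v(T)/f(T)}$; since $p \gg n^{-\tilde\mu(S_2)}$ and $\tilde\mu(S_2) \le v(T)/f(T) = \mu(T)$ for every subcomplex $T$, this holds (with the empty/vertex-only overlaps handled separately). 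This is exactly the computation that powers Lemma \ref{containment}, so I would either cite it directly or reproduce the short argument; either way it is routine rather than deep, and no genuinely new difficulty arises beyond bookkeeping the constants depending on $S_1$ and $S_2$.
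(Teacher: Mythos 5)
Your proposal is correct and follows essentially the same route as the paper: Markov's inequality to bound $X_1$ above, a Chebyshev/second-moment argument (the variance estimate from the proof of Theorem 15 in \cite{CCFK}, i.e.\ the mechanism behind Lemma \ref{containment}) to bound $X_2$ below, and the observation that $\E(X_1)/\E(X_2)\sim[n^{\mu(S_1,S_2)}p]^{f_1-f_2}\to 0$; the paper merely uses the slightly fancier thresholds $t_1=\sqrt{\E(X_1)\E(X_2)}$ in place of your $\tfrac12\E(X_2)$. The only blemish is the garbled exponent in your displayed ratio, which should read $\bigl(n^{\mu(S_1,S_2)}p\bigr)^{f_1-f_2}$, but your surrounding text draws the right conclusion from it.
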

\begin{proof}
Let $X_i: Y(n, p)\to \Z$ be the random variable counting the number of simplicial embeddings of $S_i$ into $Y\in Y(n, p)$, $i=1,2$. We know that 
$$\E(X_i)=\binom n {v_i} v_i! p^{f_i} \sim n^{v_i}p^{f_i}.$$ 
Our goal is to show that $X_1<X_2$, a.a.s. The left hand side inequality (\ref{between}) implies (via Theorem 15 from \cite{CCFK}) that $X_2(Y)>0$, a.a.s., i.e. 
$S_2$ admits an embedding into $Y$ with probability tending to one. 
We have 
$$\frac{\E(X_1)}{\E(X_2)} \sim n^{v_1-v_2}p^{f_1-f_2} = \left[ n^{\mu(S_1,S_2)}p\right]^{f_1-f_2}\to 0$$
tends to zero, under our assumption (\ref{between}) (the right hand side). 

Below we shall find $t_1, t_2 >0$ such that 
$$t_1+t_2=\E(X_2)-\E(X_1)$$ and $\E(X_1)/t_1\to 0$ while $\E(X_2)/t_2$ is bounded. 

One of the following three statements holds: either $X_1<X_2$ or $ X_1\ge \E(X_1) +t_1$ or $X_2<\E(X_2) -t_2$ and therefore
\begin{eqnarray}\label{new3}
P(X_1<X_2) \ge 1- P(X_1\ge \E(X_1) +t_1) - P(X_2< \E(X_2) -t_2).\end{eqnarray}
By Markov's inequality 
$$P(X_1\ge\E(X_1)+t_1) \le \frac{\E(X_1)}{\E(X_1)+t_1}= \frac{\frac{\E(X_1)}{t_1}}{1+ \frac{\E(X_1)}{t_1}}\to 0$$
while by Chebyschev's inequality
$$P(X_2< \E(X_2)-t_2) < \frac{{\rm {Var(X_2)}}}{t_2^2}.$$
It is known (see \cite{CCFK}, proof of Theorem 15) that under our assumptions (\ref{between}) the ratio 
$\frac{{\rm {Var(X_2)}}}{\E(X_2)^2}$ tends to zero. Therefore combining the last two inequalities with the inequality (\ref{new3}) we see that 
$P(X_1<X_2)$ tends to 1 as $n\to \infty$.

To make a specific choice of $t_1$ and $t_2$ one may take $t_1=\sqrt{\E(X_1)\E(X_2)}$ and $t_2= \E(X_2) - \E(X_1) - t_1$. Then 
$$\frac{\E(X_1)}{t_1} = \sqrt{\frac{\E(X_1)}{\E(X_2)}} \to 0$$
and 
$$\frac{\E(X_2)}{t_2} = \frac{1}{1-\frac{\E(X_1)}{\E(X_2)}- \sqrt{\frac{\E(X_1)}{\E(X_2)}}}\to 1$$
is bounded. This completes the proof. 
\end{proof}

\begin{theorem}\label{family} Let $$T_{ j}\supset S, \quad j=1, \dots, N,$$ be a finite family of 2-complexes containing a given 2-complex $S$ and satisfying  $\mu(T_{ j})<\mu(S)$. Assume that 
\begin{eqnarray}\label{between2}
n^{-\tilde \mu(S)} \ll p \ll n^{-\mu(T_{j}, S)}, \quad \mbox{for any} \quad j=1, \dots, N.
\end{eqnarray}
Then,
with probability tending to one, for a random 2-complex $Y\in Y(n, p)$ there exists a simplicial embedding $S\to Y$ which does not extend to a simplicial embedding $T_{j}\to Y$, for any $j=1, \dots, N$. 
\end{theorem}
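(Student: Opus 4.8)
The plan is to reduce the simultaneous statement about the whole family $T_1, \dots, T_N$ to the one-complex statement of Theorem~\ref{thm1} applied to each pair $T_j \supset S$ separately, and then to combine the $N$ events by a union bound. First I would observe that the hypothesis $\mu(T_j) < \mu(S)$ together with the right-hand inequality in (\ref{between2}) puts us exactly in the situation handled by Theorem~\ref{thm1}: indeed by (\ref{one}) the condition $n^{-\tilde\mu(S)} \ll p \ll n^{-\mu(T_j,S)}$ is precisely the assumption (\ref{between}) with $S_1 = T_j$ and $S_2 = S$ (and the footnote to Theorem~\ref{thm1} confirms consistency, since $\mu(T_j) < \mu(S)$ is given). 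So for each fixed $j$, Theorem~\ref{thm1} gives that, a.a.s., the number $X^{(j)}$ of simplicial embeddings of $T_j$ into $Y$ is strictly smaller than the number $X_S$ of simplicial embeddings of $S$ into $Y$.

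The key step is to upgrade this to a counting statement about embeddings of $S$ that fail to extend. Fix $j$. Every simplicial embedding $T_j \to Y$ restricts to a simplicial embedding $S \to Y$; conversely, an embedding $\iota: S \to Y$ extends to an embedding of $T_j$ iff it lies in the image of this restriction map. Hence the number of embeddings $S \to Y$ that do extend to some embedding of $T_j$ is at most $X^{(j)}$ (it could be strictly less, since several extensions may restrict to the same $\iota$, but an upper bound is all we need). Therefore the number of embeddings $\iota: S \to Y$ that extend to an embedding of $T_j$ for \emph{some} $j \in \{1, \dots, N\}$ is at most $\sum_{j=1}^N X^{(j)}$. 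Since $N$ is fixed and, by the displayed computation in the proof of Theorem~\ref{thm1}, $\E(X^{(j)})/\E(X_S) \sim [n^{\mu(T_j,S)}p]^{f(T_j)-f(S)} \to 0$ for each $j$, the same Markov/Chebyshev argument used there (applied with $t_1^{(j)} = \sqrt{\E(X^{(j)})\E(X_S)}$ for each $j$, and splitting $\E(X_S)$ across the $N+1$ quantities $X_S, X^{(1)}, \dots, X^{(N)}$) shows that a.a.s.
\begin{eqnarray*}
\sum_{j=1}^N X^{(j)} < X_S.
\end{eqnarray*}
Comparing the two bounds, a.a.s.\ the total number of embeddings of $S$ that extend to \emph{some} $T_j$ is strictly less than the total number $X_S$ of embeddings of $S$; and since the left-hand inequality of (\ref{between2}) guarantees $X_S > 0$ a.a.s.\ (Theorem~15 of \cite{CCFK}), there must exist at least one simplicial embedding $S \to Y$ extending to no $T_j$, which is the assertion.

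The one point requiring a little care — and the main obstacle — is the variance estimate needed to run the concentration argument simultaneously for all of $X_S, X^{(1)}, \dots, X^{(N)}$: one needs $\mathrm{Var}(X_S)/\E(X_S)^2 \to 0$ and, for the Markov side, control of each $X^{(j)}$ from above. The first is exactly what is established in the proof of Theorem~15 of \cite{CCFK} under the hypothesis $p \gg n^{-\tilde\mu(S)}$, which is the left-hand inequality of (\ref{between2}); the Markov bounds on the $X^{(j)}$ need only first moments, already computed above. Since $N$ is a fixed finite number independent of $n$, summing $N$ error terms each tending to zero still tends to zero, so no uniformity in $N$ is needed and the union bound is harmless. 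Thus the proof is essentially a bookkeeping combination of Theorem~\ref{thm1} with a finite union bound, with the substantive input being the second-moment method already in place.
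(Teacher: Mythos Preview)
Your proposal is correct and follows essentially the same route as the paper: the paper defines $X_1 = \sum_{j=1}^N X_{1,j}$ (your $\sum_j X^{(j)}$) and $X_2$ (your $X_S$), observes that $\E(X_1)/\E(X_2)\to 0$ since each summand ratio tends to zero, and then reruns the Markov/Chebyshev split with $t_1=\sqrt{\E(X_1)\E(X_2)}$, $t_2=\E(X_2)-\E(X_1)-t_1$ exactly as in Theorem~\ref{thm1} to conclude $X_1<X_2$ a.a.s.; your restriction-map counting observation is precisely the paper's closing sentence. The only cosmetic difference is that the paper works directly with the aggregate sum $X_1$ and a single pair $(t_1,t_2)$ rather than indexing $t_1^{(j)}$ per $j$, which is slightly cleaner but not substantively different.
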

\begin{proof} Let $X_{1, j}: Y(n, p) \to \Z$ denote the random variable counting the number of embeddings of $T_j$ into $Y\in Y(n, p)$. Denote 
$X_1= \sum_{j=1}^N X_{1, j}.$ Besides, let $X_2: Y(n, p)\to \Z$ denote the number of embeddings of $S$ into a random 2-complex $Y\in Y(n, p)$. 
As in the proof of the provious theorem one has 
$$\frac{\E(X_1)}{\E(X_2)} = \sum_{j=1}^N \frac{\E(X_{1, j})}{\E(X_2)} \to 0$$
thanks to our assumption (\ref{between2}) (the right hand side inequality). Taking 
$t_1= \sqrt{\E(X_1)\E(X_2)}$ and $t_2=\E(X_2)-\E(X_1)-t_1$ (as in the proof of the previous theorem) one has 
$t_1+t_2=\E(X_2) - \E(X_1)$ and $\E(X_1)/t_1\to 0$ while $\E(X_2)/t_2$ is bounded. Repeating the arguments used in the proof of the previous theorem we see that 
$X_1<X_2$, a.a.s. Since every embedding $T_j\to Y$ determines (by restriction) an embedding $S\to Y$, the inequality $X_1(Y)<X_2(Y)$ implies that there there are embeddings $S\to Y$ which admit no extensions to an embedding $T_j\to Y$, for any $j=1, \dots, N$. 
\end{proof}

\subsection{Projective planes in random 2-complexes}

In this section we prove the existence of 2-torsion in fundamental groups of random 2-complexes, see Theorem \ref{ordertwo}. 
The proof uses simplical embeddings of projective planes into random 2-complexes. 
Note that for a triangulation $X$ of the real projective plane one has 
$$\mu(X) = 1/2+\frac{1}{f(X)},$$ 
where $f(X)$ is the number of faces of $X$, see  (\ref{mu}), and therefore the maximal value of $\mu(X)$ happens when $f(X)$ is minimal. 

It is well known that the complex $S$
shown in the Figure \ref{figproj} 
 is the minimal triangulation of $P^2$; it has 10 faces and therefore $\mu(S)=3/5$. Moreover $\tilde \mu(S) =\mu(S)=  3/5$ by Theorem 27 from \cite{CCFK}.
By Lemma \ref{containment} for $p\gg n^{-3/5}$ the complex $S$ embeds into a random 2-complex, a.a.s. This explains the appearance of the exponent 
$3/5$ in Theorem \ref{ordertwo} below. \begin{figure}[h]
\centering
\includegraphics[width=0.3\textwidth]{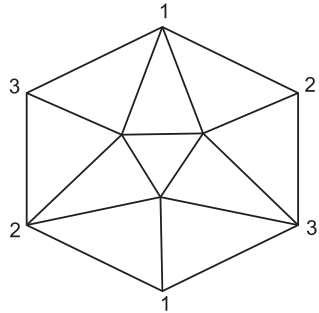}
\caption{Minimal triangulation of the real projective plane; the antipodal points on the peripheral hexagon must be identified.}\label{figproj}
\end{figure}

We may mention here that for $p\ll n^{-3/5}$ the fundamental group of a random 2-complex $Y\in Y(n,p)$ is torsion free, see Corollary \ref{cor35}.

\begin{definition} A subcomplex $S\subset Y$ is said to be essential if the induced homomorphism $\pi_1(S) \to \pi_1(Y)$ is injective. 
\end{definition}

\begin{theorem}\label{ordertwo} Let $S$ be a triangulation of the real projective plane ${\mathbf {RP}}^2$ having 6 vertices, 15 edges and 10 faces. Assume that $\epsilon >0$ and 
$$n^{-3/5}\ll p\ll n^{-1/2 - \epsilon}.$$ Then a random 2-complex $Y\in Y(n, p)$ contains $S$ as an essential subcomplex, a.a.s. In particular, the fundamental group 
$\pi_1(Y)$ contains an element of order two and hence its cohomological dimension is infinite. 
\end{theorem}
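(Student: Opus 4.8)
The plan is to combine the containment result (Lemma \ref{containment}) with the uniform hyperbolicity theorem (Theorem \ref{hyp}) and the counting-of-embeddings machinery (Theorem \ref{family}). Since $\mu(S)=\tilde\mu(S)=3/5$, the left-hand assumption $n^{-3/5}\ll p$ already guarantees via Lemma \ref{containment}(2) that a random $Y\in Y(n,p)$ contains at least one simplicial copy of $S$, a.a.s. The content of the theorem is that we can choose such a copy which is \emph{essential}, i.e. $\pi_1(S)=\Z_2\to\pi_1(Y)$ is injective. Equivalently, since $\pi_1(S)$ has no proper nontrivial subgroups, we must rule out that the generator of $\pi_1(S)$ dies in $\pi_1(Y)$.

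First I would analyse what it means for the embedded $S$ to be inessential. If the nontrivial loop $\gamma\subset S^{(1)}$ (the image of $P^1$, of length equal to $\sys(S)$, a small absolute constant) becomes null-homotopic in $Y$, then it bounds a simplicial disc $D$ in $Y$; by Theorem \ref{hyp} applied to $Y$, the area of this disc is bounded: $A_Y(\gamma)\le c_\epsilon^{-1}|\gamma|$, so $D$ has at most a constant $M=M(\epsilon)$ faces. Hence $S\cup D$ is one of finitely many combinatorial 2-complexes $T$ (obtained by gluing a triangulated disc with $\le M$ faces along $\gamma$ to $S$), each containing $S$, and each of which — since capping off the non-orientable curve on $P^2$ kills $\pi_1$ — has $\mu(T)<\mu(S)=3/5$ (indeed $T$ is homotopy equivalent to a 2-sphere or wedge of spheres, so $\chi(T)\ge\chi(S)+1$ while it acquires more faces; one checks $\mu(T,S)<1/2<3/5$ using formula (\ref{24}) and the fact that $D$ is a union of discs attached along $\gamma$, cf. the Remark after (\ref{24})). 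Therefore the hypothesis (\ref{between2}) of Theorem \ref{family} holds for the finite family $\{T_j\}$ of all such cappings, since $n^{-\tilde\mu(S)}=n^{-3/5}\ll p$ and $p\ll n^{-1/2-\epsilon}\ll n^{-\mu(T_j,S)}$.

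By Theorem \ref{family} there is then, a.a.s., a simplicial embedding $S\to Y$ which does not extend to an embedding of any $T_j$. I claim this copy of $S$ is essential. If not, $\gamma$ is null-homotopic in $Y$, hence (by the preceding paragraph and Theorem \ref{hyp}) bounds a simplicial disc $D$ with $\le M$ faces whose interior we may take disjoint from $S$ after small perturbation; but then $S\cup D$ is a simplicial copy of some $T_j$ in $Y$ extending our chosen embedding $S\to Y$ — a contradiction. Hence $\pi_1(S)=\Z_2\hookrightarrow\pi_1(Y)$, so $\pi_1(Y)$ has an element of order two, and a group with 2-torsion cannot have finite cohomological dimension (a finite cyclic subgroup has periodic, hence infinite, cohomology), so $\mathrm{cd}(\pi_1(Y))=\infty$, a.a.s.

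The main obstacle I expect is the careful bookkeeping in the previous paragraph: the disc $D$ furnished by Theorem \ref{hyp} lives in $Y$ and its interior need not be combinatorially disjoint from $S$, so one must argue that $S\cup D$ (or a suitable sub-object of it) is genuinely one of the finitely many complexes $T_j$ realized as a \emph{simplicial subcomplex} of $Y$ that restricts to the given embedding of $S$ — this requires taking the union as a subcomplex of $Y$ and checking it still caps $\gamma$ off. A secondary point is verifying the strict inequality $\mu(T_j,S)<1/2$ for every capping $T_j$ so that the right-hand side of (\ref{between2}) is implied by $p\ll n^{-1/2-\epsilon}$; this follows from the Remark after (\ref{24}) since $S$ is a pseudo-surface ($L(S)=0$) and $T_j$ is $S$ together with simplicial discs, giving $2(\chi(T_j)-\chi(S))+L(T_j)-L(S)$, and one uses $L(T_j)\le 0$ together with the Euler-characteristic bookkeeping for attaching a disc along a single curve.
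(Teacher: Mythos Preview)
Your overall strategy coincides with the paper's: use Theorem~\ref{hyp} to bound the size of a hypothetical filling disc, build a finite family of ``capped'' complexes, and apply Theorem~\ref{family}. The difficulty you flag as ``the main obstacle'' is, however, a genuine gap, and your suggested resolution does not work.

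The problem is the definition of the family $\{T_j\}$ and the verification that $\mu(T_j,S)<1/2$. You describe $T_j$ as ``$S$ together with a simplicial disc glued along $\gamma$'', and for such a \emph{clean} gluing the computation $2(\chi(T_j)-\chi(S))+L(T_j)=2-|\gamma|\le -1$ indeed gives $\mu(T_j,S)<1/2$ via formula~(\ref{24}). But the filling $b:\Delta^2\to Y$ produced by Theorem~\ref{hyp} is only a simplicial \emph{map}; its image $b(\Delta^2)\subset Y$ may share faces with $S$, may fail to be a disc, and the union $S\cup b(\Delta^2)$ can be an arbitrary closed $2$-complex with $\le 10+3c_\epsilon^{-1}$ faces in which $\gamma$ bounds. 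There is no ``small perturbation'' in the simplicial category making the interior of $D$ disjoint from $S$. For such general $T$, neither $\chi(T)$ nor $L(T)$ is controlled by your argument, and the claim $\mu(T,S)<1/2$ (equivalently $2\chi(T)+L(T)<2$) is unsupported.

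The paper closes this gap in a way that uses substantially more: it restricts attention to complexes $X$ that are \emph{minimal} (your $\gamma$ is still nontrivial in every proper intermediate $X'$) and satisfy $\tilde\mu(X)>1/2+\epsilon$; complexes failing the latter are excluded separately by Lemma~\ref{containment}. For complexes satisfying both, the paper invokes the classification of minimal cycles (Theorem~\ref{lm4}) to prove that $X$ is necessarily homeomorphic to $Z_4=P^2\cup\Delta^2$, and only then computes $\mu(X,S)=\tfrac12-\tfrac{1}{2(f(X)-10)}<\tfrac12$ exactly. Without this classification step (or an equivalent structural result) your Euler-characteristic bookkeeping does not go through, so the appeal to Theorem~\ref{family} is not yet justified.
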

\begin{proof} 
Consider the set $\mathcal S_\epsilon$ of isomorphism types of pure connected closed 2-complexes $X$ satisfying the following conditions:
\begin{enumerate}
  \item[(a)] $X$ contains $S$ as a subcomplex; 
  \item[(b)] The inclusion $S\to X$ induces a trivial homomorphism $\pi_1(S) \to \pi_1(X)$; 
  \item[(c)] For any subcomplex $S\subset X'\subset X$, $X'\not=X$, the homomorphism $\pi_1(S)\to \pi_1(X')$ is nontrivial;
  \item[(d)]  $X$ has at most $10+3c_\epsilon^{-1}$ faces where $c_\epsilon$ is the constant given by Theorem \ref{hyp};
  \item[(e)] $\tilde \mu(X) > 1/2 +\epsilon$;
  \end{enumerate}

Let us show that any $X\in {\mathcal S}_\epsilon$ is homeomorphic to the complex $Z_4$, as defined in Theorem \ref{lm4}.
In the exact sequence 
$$0\to H_2(X) \to H_2(X,S) \to H_1(S)=\Z_2 \to 0$$
the middle group has no torsion and hence $b_2(X)=b_2(X,S) \ge 1$. If $Z\subset X$ is a minimal cycle then $\mu(Z)>1/2+ \epsilon$ (by (e)) and Theorem \ref{lm4} implies that $Z$ is homeomorphic to one of the 2-complexes $Z_1, Z_2, Z_3, Z_4$. 

If $Z$ is homeomorphic to one of the complexes $Z_1, Z_2, Z_3$ then any face $\sigma \subset Z$ satisfies the conditions of Corollary \ref{cor2}. Therefore 
removing from $X$
any face $\sigma\subset Z-S$ would produce 2-complex $X'$ violating (c). 

This shows that any minimal cycle $Z\subset X$ is homeomorphic to $Z_4=P^2\cup \Delta^2$. 

The union of the edges of degree 3 in $Z$ is a closed curve and cutting along this curve disconnects 
$Z$ onto a copy of $P^2$ and $\Delta^2$. Thus, we may think of $P^2$ as being a subcomplex of $Z$. If $P^2\not\subset S$ then 
there is a 2-simplex $\sigma\subset P^2-S$. Then the curve $\partial \sigma\subset Z-\sigma\subset X-\sigma$ is null-homotopic and hence 
the inclusion $X'=Z-\sigma \to X$ induces an isomorphism $\pi_1(X') \to \pi_1(X)$ contradicting (c). 
This argument shows that $P^2 \subset S $ and hence $P^2=S$. 
Since $\pi_1(S) \to \pi_1(Z)$ is trivial, it follows from the minimality of $X$ (property (c)) that $Z=X$. 

Since $\mu(X)>1/2$, by formula (\ref{mu}) we obtain that $2\chi(X)+L(X)>0$ or equivalently $L(X)\ge -3$ (as $\chi(X)=2$ since $X$ is homotopy equivalent to the 2-sphere). On the other hand, $L(X)\le -3$ 
since $X$ is closed and has at least 3 edge of degree 3.  Thus $L(X)=-3$.

Using formula (\ref{24}) we find that 
$$\mu(X,S) = \frac{1}{2} - \frac{1}{2(f(X)-10)}< \frac{1}{2}$$
and then applying Theorem \ref{family} we find that for 
$$n^{-3/5}\ll p \ll n^{-1/2-\epsilon},$$ with probability tending to 1, there exist embedding $S\to Y$ (where $Y\in Y(n,p)$ is random) which cannot be extended to an embedding of $X\to Y$ for any $X\in {\mathcal S}_\epsilon$.

Let $Y'_n\subset Y(n,p)$ denote the set of complexes $Y\in Y(n,p)$ such that there exists an simplicial embedding $S\to Y$ which cannot be extended to an embedding $X\to Y$, for any $X\in {\mathcal S}_\epsilon$. We have shown earlier that ${\mathbb P}(Y_n')\to 1$ as $n\to \infty$. 

Consider the finite family $\mathcal F$ of isomorphism types of simplicial complexes $T$ having at most $10+3c_\epsilon^{-1}$ faces and satisfying 
$\tilde \mu(T) \le 1/2 + \epsilon$. By Lemma \ref{containment}, since we assume that $p\ll n^{-1/2-\epsilon}$, 
the set $Y''_n\subset Y(n,p)$ of 2-complexes $Y\in Y(n,p)$ having the property that none of the complexes $T\in {\mathcal F}$ is embeddable into $Y$, satisfies 
${\mathbb P}(Y''_n)\to 1$, as $n\to \infty$. 

Besides, let $Y'''_n\subset Y(n,p)$ denote the set of complexes satisfying Theorem \ref{hyp}. We know that ${\mathbb P}(Y'''_n)\to 1$ as $n\to \infty$.

Now, let $Y\in Y_n'\cap Y_n''\cap Y'''_n$ and let $S\subset Y$ be an embedding which cannot be extended to an embedding $X\to Y$ for 
$X\in {\mathcal S}_\epsilon$. Let us show that $S\subset Y$ is essential. 
%
Assuming the contrary, if the embedding $S\subset Y$ is not essential then the central circle $\gamma$ of $S$ (of length 3) extends to a simplicial map of a simplicial disc into $Y$. 
Under the assumption $p\ll n^{-1/2-\epsilon}$, using Theorem \ref{hyp}, we find that the circle $\gamma$ extends to a simplicial map 
$b: \Delta^2 \to Y$, $b|\partial \Delta^2=\gamma$, 
into $Y$ of a simplicial disc  
of area $\le 3c_\epsilon^{-1}$ where $c_\epsilon>0$ depends only on the value of $\epsilon$. 
Since $S\cup b(\Delta^2)$ is embedded into $Y$ we obtain that $\tilde \mu (S\cup b(\Delta^2))>1/2 +\epsilon$. 
Hence, if $S\subset Y$ is not essential then there would exist a complex $X\in {\mathcal S}_\epsilon$, 
where $S\subset X\subset S\cup b(\Delta^2)$, such that the embedding $S\subset Y$ extends to an embedding $X\subset Y$. This gives a contradiction. 
\end{proof}

\subsection{Absence of higher torsion}

In this subsection we prove the following statement complementing Theorem \ref{ordertwo}. 

\begin{theorem}\label{orderthree} Assume that the probability parameter $p$ satisfies
\begin{eqnarray}
p\ll n^{-1/2-\epsilon}, 
\end{eqnarray} 
where
$\epsilon>0.$ Let $m\ge 3$ be a prime number. Then a random 2-complex $Y\in Y(n,p)$, with probability tending to 1, has the following property: 
for any subcomplex $Y'\subset Y$ the fundamental 
group $\pi_1(Y')$ has no elements of order $m$.
\end{theorem}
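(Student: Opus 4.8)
\textbf{Proof plan for Theorem \ref{orderthree}.}

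The plan is to argue by contradiction. Suppose that for some subcomplex $Y'\subset Y$ the group $\pi_1(Y')$ contains an element of order $m$. Pick a simplicial loop $\gamma$ in $Y'$ representing such an element $g$; then $g$ is nontrivial but $g^m=1$, so $\gamma^m$ is null-homotopic in $Y'$ while $\gamma$ itself is not. First I would pass to a minimal configuration realizing this torsion: consider the simplicial Van Kampen diagram for the null-homotopy of $\gamma^m$ together with the loop $\gamma$, and take the resulting 2-complex $W$ (the mapping cylinder of the diagram glued along $\gamma$) to have as few faces as possible among all subcomplexes of $Y$ carrying an $m$-torsion element. The key point is that, because $g^m=1$ in $\pi_1(W)$ but $g$ has order exactly $m$, the complex $W$ looks like a ``Moore-type'' surface: after collapsing free faces we may assume $W$ is pure and closed, the loop $\gamma$ bounds a ``fan'' of $m$ discs meeting along $\gamma$, and the complement behaves like a surface. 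Since $H_1(W;\Z)$ contains an element of order $m$ (the image of $g$ must have order dividing $m$ in homology — and here one checks it has order exactly $m$, not $1$, using that $m$ is prime and $g\notin[\pi_1,\pi_1]$ would need separate handling; alternatively one works with the abelianization directly), $W$ is not homotopy equivalent to a graph, so by Theorem \ref{hyp} the isoperimetric constant satisfies $I(W)\ge I(Y')\ge c_\epsilon$.

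Next I would use the systolic inequality (\ref{systole}). Because $\pi_1(W)$ is not free (it has $m$-torsion), we get $\sys(W)<6\cdot A(W)^{1/2}$, where $A(W)$ is the number of faces. Let $\delta$ be a shortest homotopically nontrivial simplicial loop in $W$, so $|\delta|=\sys(W)$. The order of the class of $\delta$ in $\pi_1(W)$ divides — no: here I would instead argue that \emph{some} power $\delta^k$ with $1\le k\le m$ (or more generally $k$ equal to the order of $[\delta]$, which divides into the torsion structure) is null-homotopic, since the relevant part of $\pi_1(W)$ is built from the torsion; then $\delta^k$ is null-homotopic with area at most $A(W)$, giving
\begin{eqnarray}
c_\epsilon \le \frac{|\delta^k|}{A_W(\delta^k)} \le \frac{k\cdot \sys(W)}{A(W)} \le \frac{m\cdot 6\, A(W)^{1/2}}{A(W)} = \frac{6m}{A(W)^{1/2}},\nonumber
\end{eqnarray}
so $A(W)\le 36 m^2 c_\epsilon^{-2}$. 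Thus $W$, if it exists, is one of \emph{finitely many} isomorphism types of 2-complexes, all with a universally bounded number of faces (the bound depends only on $\epsilon$ and $m$).

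Finally I would apply Lemma \ref{containment}. For each such finite candidate complex $W$ in our list one has $\tilde\mu(W)\le 1/2+\epsilon$: indeed $W$ carries $m$-torsion in $\pi_1$, so it cannot be of the form classified in Corollary \ref{thm6} (a wedge of circles, spheres and projective planes has $\pi_1$ a free product of $\Z$'s and $\Z_2$'s, which for $m\ge 3$ odd has no $m$-torsion), hence $\tilde\mu(W)>1/2$ is impossible, and one must rule out $1/2<\tilde\mu(W)\le 1/2+\epsilon$ as well — this is where I would need a short argument showing that any $W$ with $\tilde\mu(W)>1/2$ is covered by Corollary \ref{thm6} regardless of how close $\tilde\mu(W)$ is to $1/2$, so in fact $\tilde\mu(W)\le 1/2$. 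Then, since $p\ll n^{-1/2-\epsilon}\le n^{-\tilde\mu(W)}$ for every such $W$, Lemma \ref{containment}(1) shows that a.a.s. none of these complexes embeds into $Y$, so a.a.s. no subcomplex $Y'\subset Y$ can contain $m$-torsion. The main obstacle I anticipate is the structural reduction in the first step: carefully producing, from an arbitrary $m$-torsion element, a subcomplex $W\subset Y$ whose $\pi_1$ genuinely retains the $m$-torsion and to which the systolic inequality applies cleanly (controlling the ``fan of $m$ discs'' and ensuring $\pi_1(W)$ is non-free and that some short power of the systolic loop dies) — this requires the Moore-surface analysis and is the technical heart of the proof.
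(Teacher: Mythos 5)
Your overall strategy is the same as the paper's (reduce an $m$-torsion element to a Moore-surface configuration, bound its size by combining the uniform isoperimetric constant of Theorem \ref{hyp} with the systolic inequality (\ref{systole}), then exclude the finitely many resulting candidates via Corollary \ref{thm6} and Lemma \ref{containment}), and your final containment step is fine: once $\pi_1(W)$ has $m$-torsion with $m\ge 3$ odd, Corollary \ref{thm6} forces $\tilde\mu(W)\le 1/2$ outright, so the case $1/2<\tilde\mu(W)\le 1/2+\epsilon$ you worry about simply does not occur. However, there is a genuine gap exactly where you flag uncertainty: you apply the systolic inequality to the subcomplex $W\subset Y$ and then need \emph{some} power $\delta^k$ with $k\le m$ of the systolic loop $\delta$ of $W$ to be null-homotopic in order to feed it into the isoperimetric inequality. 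Nothing forces this. The group $\pi_1(W)$ is merely known to contain an element of order $m$; it may also contain elements of infinite order (indeed generically it will, since $W$ sits inside a complex whose fundamental group is built from free pieces), and the shortest homotopically nontrivial loop in $W$ may well represent such an element. In that case no power of $\delta$ bounds, the area of $\delta^k$ is undefined, and your bound $A(W)\le 36m^2c_\epsilon^{-2}$ collapses. The claim that $H_1(W;\Z)$ retains $m$-torsion is similarly unjustified (the image of $g$ in the abelianization can die), though you only need the weaker and correct fact that $\pi_1(W)$ is non-free.

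The paper avoids this trap by never taking the systole of the image. It fixes an \emph{$m$-minimal} map $f:\Sigma\to Y'$, where $\Sigma$ is an abstract triangulated Moore surface $M(\Z_m,1)$: the singular circle $C$ is chosen of minimal length among all loops $\gamma$ in $Y'$ with $\gamma$ nontrivial and $\gamma^m$ trivial, and then $\Sigma$ has the minimal number of faces. Because $\pi_1(\Sigma)=\Z_m$ is pure torsion and $m$ is prime, \emph{every} nontrivial loop in $\Sigma$ has trivial $m$-th power, and the $m$-minimality forces $|C|=\sys(\Sigma)$; the isoperimetric bound in $Y'$ then caps $A(\Sigma)$ by $c_\epsilon^{-1}m\cdot\sys(\Sigma)$ (replacing the disc part of $\Sigma$ by a minimal filling of $\gamma^m$), and (\ref{systole}) applied to $\Sigma$ closes the loop, giving $A(\Sigma)\le(6m/c_\epsilon)^2$. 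The objects excluded by Lemma \ref{containment} are then the images $f(\Sigma)$ of the finitely many small Moore surfaces under maps inducing injections on $\pi_1$. To repair your argument you would need to replace your $W$ and its systole by this domain-side construction (or otherwise prove that a torsion element of $\pi_1(Y')$ is represented by a short loop with a small filling of its $m$-th power), which is precisely the ``Moore-surface analysis'' you correctly identify as the technical heart but do not supply.
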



Let $\Sigma$ be a simplicial 2-complex homeomorphic to {\it the Moore surface} 
$$M(\Z_m, 1)=S^1\cup_{f _m}e^2, \quad \mbox{where}\quad \quad m\ge 3;$$ 
it is obtained from the circle $S^1$ by attaching a 2-cell via the degree $m$ map $f_m: S^1\to S^1$, $f_m(z)=z^m$, $z\in S^1$. 
The 2-complex $\Sigma$ has a well defined circle $C\subset \Sigma$ (which we shall call {\it the singular circle}) which is the union of all edges of degree $m$; all other edges of $\Sigma$ have degree $2$. Clearly, the homotopy class of the singular circle generates the fundamental group $\pi_1(\Sigma)\simeq \Z_m$. 

Next we define an integer $N_m(Y)\ge 0$ associated to any connected 2-complex $Y$. If $\pi_1(Y)$ has no $m$-torsion we set $N_m(Y)=0.$
If $\pi_1(Y)$ has elements of order $m$ we shall consider homotopically nontrivial simplicial maps 
$\gamma: C_r \to Y$,
where $C_r$ is the simplicial circle with $r$ edges, such that
\begin{enumerate}
  \item[(a)] $\gamma^m$ is null-homotopic (as a free loop in $Y$);
  \item[(b)] $r$ is minimal: for $r'<r$ any simplicial loop $\gamma:C_{r'} \to Y$ satisfying (a) is homotopically trivial. 
  \end{enumerate} 
Any such simplicial map $\gamma:C_r \to Y$ can be extended to a simplicial map $f: \Sigma \to Y$ of a triangulation $\Sigma$ of the Moore surface, such that the singular circle $C$ of $\Sigma$ is isomorphic to $C_r$ and $f|C=\gamma$. We shall say that a simplicial map $f:\Sigma \to Y$ is {\it $m$-minimal} if 
it satisfies (a), (b) and the number of 2-simplexes in $\Sigma$ is the smallest possible. 
Now, we denote by 
$$N_m(Y)\in \Z$$ 
the number of 2-simplexes in a triangulation of the Moore surface $\Sigma$ admitting an $m$-minimal map $f: \Sigma \to Y$.


\begin{lemma}\label{lmc}
Let $Y$ be a 2-complex satisfying $I(Y)\ge c>0$ (where the quantity $I(Y)$ is defined in \S \ref{sec22}). 
Let $m\ge 3$ be an odd prime. 
Then
one has 
$$N_m(Y) \le \left(\frac{6m}{c}\right)^2. $$
\end{lemma}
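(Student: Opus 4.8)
The plan is to bound $N_m(Y)$ by relating it, on the one hand, to the systole of an $m$-minimal Moore surface $\Sigma$ and, on the other, to the isoperimetric constant $I(Y) \ge c$ via the fact that the singular circle traversed $m$ times bounds a disk. First I would take an $m$-minimal simplicial map $f \colon \Sigma \to Y$, so that $N_m(Y) = A(\Sigma)$ is the number of $2$-simplexes of $\Sigma$, and let $C = C_r \subset \Sigma$ be its singular circle with $r$ edges, so that $r = \sys(\cdot)$ is the minimal length of a loop $\gamma \colon C_{r'} \to Y$ with $\gamma^m$ null-homotopic. The key geometric input is that $\pi_1(\Sigma) \cong \Z_m$ is finite, so $\Sigma$ has non-free fundamental group and inequality (\ref{systole}) applies: $\sys(\Sigma) < 6 \cdot A(\Sigma)^{1/2}$. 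Since the singular circle $C$ generates $\pi_1(\Sigma)$, it is homotopically nontrivial in $\Sigma$, hence $\sys(\Sigma) \le r$; but I will actually want the reverse-flavored bound, namely that $r$ itself is controlled by $\sys(\Sigma)$ — indeed by minimality of $r$, the loop $\gamma$ realizes the shortest nontrivial loop whose $m$-th power dies in $Y$, and composing with the isoperimetric estimate in $Y$ will force $r$ to be comparable to $\sys(\Sigma)$.

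The main calculation is then the isoperimetric step. The loop $\gamma \colon C_r \to Y$ satisfies $\gamma^m \sim 1$ in $Y$ and $|\gamma^m| = mr$, so by $I(Y) \ge c$ we get a simplicial filling of $\gamma^m$ in $Y$ of area at most $c^{-1} m r$. This filling, together with the map $C_r \to Y$, assembles into a simplicial map of a triangulated Moore surface $\Sigma' \to Y$ extending $\gamma$ (the disk of the filling glued to $C_r$ along the degree-$m$ map), with $A(\Sigma') \le c^{-1} m r$. By $m$-minimality of $\Sigma$ we conclude $N_m(Y) = A(\Sigma) \le A(\Sigma') \le c^{-1} m r$. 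Combining with $\sys(\Sigma) < 6 A(\Sigma)^{1/2}$ and the fact that $r$ equals the systolic length in the relevant sense — more precisely, that the singular circle of a Moore surface of minimal area is itself a shortest nontrivial loop, so $r \le \sys(\Sigma) \cdot (\text{const})$ or directly $r \le \sys(\Sigma)$ after a short argument — yields $N_m(Y) \le c^{-1} m \cdot 6 \cdot N_m(Y)^{1/2}$, i.e. $N_m(Y)^{1/2} \le 6m/c$, which is the claimed bound $N_m(Y) \le (6m/c)^2$.

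The step I expect to be the main obstacle is the clean identification of $r$ with $\sys(\Sigma)$ (or with a constant multiple of it): one must argue that in an $m$-minimal $\Sigma$ the singular circle cannot be shortened — which is essentially condition (b) in the definition of $N_m$ — and also that no \emph{other} homotopically nontrivial loop in $\Sigma$ is shorter, using that $\pi_1(\Sigma) \cong \Z_m$ is cyclic so every nontrivial loop is a power of the generator and hence (up to homotopy) of length at least $r$; this requires care because short loops in $\Sigma$ need not be simplicially "powers" of $\gamma$. A possible route is to work directly: take the shortest nontrivial loop $\delta$ in $\Sigma$ realizing $\sys(\Sigma)$, observe $\delta^m$ is null-homotopic in $\Sigma$ hence in $Y$, and apply minimality (b) of $r$ to $f \circ \delta$ to get $r \le \sys(\Sigma)$; one must only check $f \circ \delta$ is still homotopically nontrivial in $Y$, which follows because $\delta^k$ is nontrivial in $\Sigma$ for $0 < k < m$ and if $f \circ \delta$ were trivial in $Y$ we could contradict $m$-minimality of $\Sigma$ by replacing its singular circle. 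The use of the hypothesis that $m$ is an odd prime (rather than an arbitrary integer) enters in ensuring $\pi_1(\Sigma) \cong \Z_m$ has no proper nontrivial subgroups, so that any nontrivial power of the generator still generates — which is what makes the systolic loop usable. The rest is the routine arithmetic assembled above.
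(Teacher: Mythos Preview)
Your proposal is correct and follows essentially the same route as the paper: take an $m$-minimal $f:\Sigma\to Y$, use $I(Y)\ge c$ to bound $A(\Sigma)\le c^{-1}m\,|C|$, identify $|C|=\sys(\Sigma)$, and combine with the systolic inequality $\sys(\Sigma)\le 6\sqrt{A(\Sigma)}$ to get $N_m(Y)\le (6m/c)^2$. The paper simply asserts $|C|=\sys(\Sigma)$ as ``obvious from $m$-minimality'', whereas you correctly flag it as the nontrivial step and sketch the right argument---a shortest nontrivial loop $\delta$ in $\Sigma$ has $(f\circ\delta)^m\sim 1$, and $f\circ\delta$ is nontrivial in $Y$ because $f_*$ is injective on $\pi_1$ (here primality of $m$ is used: $\gamma$ has order exactly $m$, so $f_*:\Z_m\to\pi_1(Y)$ has trivial kernel), whence condition (b) forces $|\delta|\ge r$.
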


\begin{proof} We shall assume that the fundamental group of $Y$ contains an element of order $m$; otherwise $N_m(Y)=0$. 
Consider an $m$-minimal simplicial map $f: \Sigma \to Y$,  
\begin{eqnarray}\label{sys1}
A(\Sigma) = N_m(Y),
\end{eqnarray}
where $A(\Sigma)$ denotes the number of 2-simplexes in $\Sigma$. 
It is obvious from the $m$-minimality of $f$ that the singular circle $C\subset \Sigma$ is the shortest (in terms of the number of edges) homotopically nontrivial simplicial loop 
in $\Sigma$, i.e.
\begin{eqnarray}\label{shortest}
|C|={\rm {sys}}(\Sigma).\end{eqnarray}

Consider the loop $\gamma=f|C$, $|\gamma|=|C|={\rm {sys}}(\Sigma)$. We know that $\gamma^m$ is homotopically trivial in $Y$ and the inequality $I(Y) \ge c>0$ implies that $\gamma^m$ bounds in $Y$ a disc of area at most 
$m\cdot |\gamma| \cdot c^{-1}$. The 2-complex 
$\Sigma$ is obtained from a simplicial disc $D$ by dividing its boundary into $m$ intervals of equal length and identifying them to each other. 
If $A(D) > m\cdot |\gamma| \cdot c^{-1}$ then one could replace the disc $D$ by the minimal spanning disc for $\gamma^m$ obtaining a simplicial map $\Sigma'\to Y$ which has smaller area contradicting the $m$-minimality. Thus, we obtain
\begin{eqnarray}\label{syss2}
A(\Sigma) \le c^{-1}\cdot m \cdot {\rm {sys}}(\Sigma).
\end{eqnarray}
Inequality (\ref{systole}) gives
%
%
\begin{eqnarray}\label{sys3} {\rm {sys}}(\Sigma) \le 6\cdot A(\Sigma)^{1/2}. 
\end{eqnarray}
Combining (\ref{sys1}), (\ref{syss2}) and (\ref{sys3}) we obtain 
$$N_m(Y)\,  =\,  A(\Sigma) \le \left(\frac{6m}{c}\right)^2. $$
\end{proof}
\begin{theorem}\label{thma}
Assume that the probability parameter $p$ satisfies $p\ll n^{-1/2-\epsilon}$ where $\epsilon >0$ is fixed. 
Let $m\ge 3$ be an odd prime. 
Then there exists a constant $C_\epsilon>0$ such that a random 2-complex $Y\in Y(n,p)$ with probability tending to 1 has the following property: for any subcomplex $Y'\subset Y$ one has
\begin{eqnarray}\label{ineqq}
N_m(Y') \le C_\epsilon. 
\end{eqnarray}
\end{theorem}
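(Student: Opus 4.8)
The plan is to combine Theorem~\ref{hyp} with Lemma~\ref{lmc} in a direct way. By Theorem~\ref{hyp}, since $p\ll n^{-1/2-\epsilon}$, there is a constant $c_\epsilon>0$ such that, with probability tending to $1$, a random $Y\in Y(n,p)$ has the property that \emph{every} subcomplex $Y'\subset Y$ satisfies $I(Y')\ge c_\epsilon$. I would fix such a $Y$ (belonging to the a.a.s.\ event provided by Theorem~\ref{hyp}) and take an arbitrary subcomplex $Y'\subset Y$.

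Next I would want to apply Lemma~\ref{lmc} to $Y'$. There is one bookkeeping point: $N_m$ and $I$ were defined for \emph{connected} $2$-complexes. So first I would pass to a connected component $Y''$ of $Y'$; then $I(Y'')\ge c_\epsilon$ still holds (a component is itself a subcomplex of $Y$), and $\pi_1(Y')$ has $m$-torsion if and only if some component does, with the same relevant loops. Since $m\ge 3$ is an odd prime, Lemma~\ref{lmc} applies verbatim to $Y''$ and gives
$$
N_m(Y'') \le \left(\frac{6m}{c_\epsilon}\right)^2.
$$
Setting $C_\epsilon = (6m/c_\epsilon)^2$ (a constant depending only on $\epsilon$ and the fixed prime $m$) yields $N_m(Y')\le C_\epsilon$ for every subcomplex $Y'\subset Y$, which is exactly (\ref{ineqq}).

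The only genuine content is therefore the invocation of the two prior results; there is essentially no new estimate to carry out. The main (minor) obstacle is making sure the definitions line up across connectedness and across the ``any subcomplex'' quantifier: the crucial feature is that Theorem~\ref{hyp} is stated uniformly over \emph{all} subcomplexes of $Y$ simultaneously, so the single a.a.s.\ event suffices to bound $N_m(Y')$ for all $Y'$ at once, rather than needing a union bound over infinitely many subcomplexes. I would close by remarking that this uniformity is precisely what makes Theorem~\ref{thma} stronger than the bare statement that $\pi_1(Y)$ itself has bounded $N_m$, and that Theorem~\ref{orderthree} will follow by combining Theorem~\ref{thma} with the containment estimates of Lemma~\ref{containment} applied to the finitely many triangulated Moore surfaces with at most $C_\epsilon$ faces.
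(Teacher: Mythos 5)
Your proposal is correct and is essentially identical to the paper's own proof: both invoke the uniform bound $I(Y')\ge c_\epsilon$ from Theorem \ref{hyp} over all subcomplexes simultaneously and then apply Lemma \ref{lmc} to conclude $N_m(Y')\le (6m/c_\epsilon)^2$. Your extra remark about reducing to connected components is a harmless bookkeeping point the paper leaves implicit.
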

\begin{proof}  We know from Theorem \ref{hyp} that, with probability tending to 1, a random 2-complex $Y$ has the following property: for any subcomplex $Y'\subset Y$ one has 
$I(Y')\ge c_\epsilon>0$ where $c_\epsilon>0$ is the constant given by Theorem \ref{hyp}. 
Then, setting $C= \left(\frac{6m}{c_\epsilon}\right)^2$, the inequality (\ref{ineqq}) follows from Lemma \ref{lmc}. 
\end{proof}

\begin{proof}[Proof of Theorem \ref{orderthree}] Let $c_\epsilon>0$ be the number given by Theorem \ref{hyp}. 
Consider the finite set of all isomorphism types of triangulations $\mathcal S_m =\{\Sigma\}$ of the Moore surface $M(\Z_m,1)$ having at most 
$\left(\frac{6m}{c_\epsilon}\right)^2$ two-dimensional simplexes. Let $\mathcal X_m$ denote 
the set of isomorphism types of images of all surjective simplicial maps $\Sigma \to X$ inducing injective homomorphisms $\pi_1(\Sigma)=\Z_m \to \pi_1(X)$,
where $\Sigma \in \mathcal S_m$. 
The set $\mathcal X_m$ is also finite.

From Theorem \ref{thma} we obtain that, with probability tending to one, for any subcomplex $Y'\subset Y$, either $\pi_1(Y')$ has no $m$-torsion, or 
there exists an $m$-minimal map $f: \Sigma \to Y'$ with $\Sigma$ having at most $\left(\frac{6m}{c_\epsilon}\right)$ simplexes of dimension 2; in the second case the image 
$X=f(\Sigma)$ is a subcomplex of $Y'$ and $f:\Sigma \to X$ induces a monomorphism $\pi_1(\Sigma)\to \pi_1(X)$, i.e. 
$X\in {\mathcal X}_m$.

From Corollary \ref{thm6} we know that the fundamental group of any 2-complex satisfying $\tilde \mu(X)>1/2$ is a free product of several copies of $\Z$ and $\Z_2$ and has no $m$-torsion, as we assume that $m\ge 3$. 
Since the fundamental group of any $X\in \mathcal X_m$ has $m$-torsion, where $m\ge 3$, one has $\tilde \mu(X) \le 1/2$ for any $X\in \mathcal X_m$. 
 Hence, using the finiteness of $\mathcal X_m$ and the results on the containment problem (see \cite{CCFK}, Theorem 15) we see that for 
$p\ll n^{-1/2-\epsilon}$
the probability that a random complex $Y\in Y(n,p)$ contains a subcomplex isomorphic to one of the complexes
$X\in \mathcal X_m$ tends to $0$ as $n\to \infty$. 
Hence, we obtain that (a.a.s.) any subcomplex $Y'\subset Y$ does not contain $X\in \mathcal X_m$ as a subcomplex and therefore the fundamental group of 
$Y'$ has no $m$-torsion.
\end{proof}

\section{Minimal spheres and the Whitehead Conjecture}\label{secwh}

\begin{definition}
Let $Y$ be a simplicial complex with $\pi_2(Y)\not=0$. We define a numerical invariant $M(Y)\in \Z$, $M(Y) \ge 4$, as the minimal number of faces in a 
2-complex $\Sigma$ 
homeomorphic to the sphere
$S^2$ such that there exists a homotopically nontrivial simplicial map $\Sigma\to Y$. 
We define $M(Y)=0$, if $\pi_2(Y)=0$.
\end{definition}

Our first goal in this section is to prove the following theorem:

\begin{theorem} \label{thmupper}
Assume that the probability parameter $p$ satisfies $p\ll n^{-1/2-\epsilon}$ where $\epsilon >0$ is fixed. Then there exists a constant $C=C_\epsilon$ 
such that 
a random 2-complex $Y\in Y(n,p)$ with probability tending to 1 has the following property: for any subcomplex $Y'\subset Y$ one has 
\begin{eqnarray}\label{mless}
M(Y') \le C.  
\end{eqnarray}
\end{theorem}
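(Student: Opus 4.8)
The plan is to follow the strategy of Lemma~\ref{lmc} and Theorem~\ref{thma}: use the uniform isoperimetric bound of Theorem~\ref{hyp} to show that a \emph{minimal} homotopically nontrivial simplicial sphere inside any subcomplex $Y'\subset Y$ cannot have many faces. Accordingly, I would work on the event of probability tending to $1$ on which every subcomplex $Y'\subset Y$ satisfies $I(Y')\ge c_\epsilon$, where $c_\epsilon>0$ is the constant of Theorem~\ref{hyp}. Fix such a $Y'$. If $\pi_2(Y')=0$ then $M(Y')=0$ and there is nothing to prove, so assume $\pi_2(Y')\ne 0$ and choose a homotopically nontrivial simplicial map $f\colon \Sigma\to Y'$ from a triangulated $2$-sphere $\Sigma$ with $A(\Sigma)=M(Y')=:F$ smallest possible. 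The aim is to bound $F$ by a constant depending only on $\epsilon$.

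The combinatorial ingredient is a balanced short separator for triangulated spheres: there is a universal constant $C_0>0$ such that every triangulated $2$-sphere $\Sigma$ with $F$ faces contains a simple closed edge-loop $\gamma\subset\Sigma^{(1)}$ of length $|\gamma|\le C_0\sqrt F$ which, by the Jordan curve theorem, separates $\Sigma$ into two simplicial discs $D_1,D_2$ with $f(D_1),f(D_2)\ge F/4$. This is the simple-cycle version of the Lipton--Tarjan planar separator theorem applied to $\Sigma^{(1)}$ (or can be obtained directly by a breadth-first sweep-out of $\Sigma$); it is an isoperimetric inequality for simplicial spheres of exactly the type mentioned in the Introduction. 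In particular $f(D_1)\le 3F/4$.

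Now I would perform the surgery. The loop $f|\gamma$ is null-homotopic in $Y'$, since it bounds the singular disc $f|D_1$, and $I(Y')\ge c_\epsilon$ provides a simplicial disc $V$ with a simplicial map $b\colon V\to Y'$ satisfying $b|\partial V=f|\gamma$ and $A(V)\le c_\epsilon^{-1}|\gamma|\le c_\epsilon^{-1}C_0\sqrt F$. Gluing $D_1$ and $D_2$ to $V$ along $\gamma$ produces triangulated $2$-spheres $\Sigma_i=D_i\cup_\gamma V$ carrying simplicial maps $f_i\colon\Sigma_i\to Y'$ which equal $f$ on $D_i$ and $b$ on $V$. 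A standard neck-cutting argument gives, up to sign and the $\pi_1$-action, $[f]=[f_1]+[f_2]$ in $\pi_2(Y')$ (recall $\pi_2(Y')$ is a $\Z[\pi_1(Y')]$-module, the $\pi_1$-element arising from a path joining the basepoint to $\gamma$); since $[f]\ne 0$, at least one of $[f_1],[f_2]$ is nontrivial, say $[f_1]\ne 0$. Then $\Sigma_1$ is a homotopically nontrivial simplicial sphere in $Y'$ with
\[
A(\Sigma_1)=f(D_1)+A(V)\ \le\ \tfrac{3F}{4}+c_\epsilon^{-1}C_0\sqrt F,
\]
which is strictly smaller than $F$ whenever $\sqrt F>4C_0c_\epsilon^{-1}$. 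Minimality of $\Sigma$ then forces $F\le 16C_0^2c_\epsilon^{-2}=:C_\epsilon$, a constant depending only on $\epsilon$, and hence $M(Y')\le C_\epsilon$ for every subcomplex $Y'\subset Y$.

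I expect the main obstacle to be making this surgery rigorous at the simplicial level. One needs the filling of $f|\gamma$ to be a genuine simplicial disc with injective boundary, so that $\Sigma_1,\Sigma_2$ are honestly homeomorphic to $S^2$; when $f|\gamma$ fails to be a prime loop one should first decompose it into prime sub-loops, fill each by a disc as in the Remark following the definition of $I(X)$, and reassemble, checking that the total area stays $\le c_\epsilon^{-1}|\gamma|$ and that a sphere still results. One must also justify the identity $[f]=[f_1]+[f_2]$ in $\pi_2(Y')$ with proper attention to basepoints and the $\pi_1$-action, and state the balanced cycle-separator theorem precisely in the ``number of faces'' measure (classical, but deserving an explicit reference or a short breadth-first-layer argument). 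None of these points is expected to be deep, but they are where the care is needed.
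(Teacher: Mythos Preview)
Your proposal is correct and is essentially the same argument as the paper's: the paper also works on the event $I(Y')\ge c_\epsilon$, takes a minimal nontrivial sphere $f:\Sigma\to Y'$, and performs the same neck-cutting surgery to force a short balanced separator on $\Sigma$. The only cosmetic difference is packaging: the paper phrases the separator bound as the Cheeger inequality $h(\Sigma)\ge c_\epsilon$ (proved by exactly your surgery) and then invokes Papasoglu's inequality $h(\Sigma)\le 16/\sqrt{A(\Sigma)}$ in place of your Lipton--Tarjan cycle separator, yielding $M(Y')\le (16/c_\epsilon)^2$.
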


In other words, in the specified range of $p$, the numbers $M(Y')$ have a uniform upper bound, a.a.s.

In the proof of Theorem \ref{thmupper} we will use the following version of the notion of Cheeger's constant. 
For a pure simplicial 2-complex $\Sigma$ one defines the Cheeger's constant $h(\Sigma)$ as 
\begin{eqnarray}\label{chee}h(\Sigma) = \min_{S\subset \Sigma}\left\{\frac{|\partial S|}{A(S)}; A(S)\le A(\Sigma)/2\right\},\end{eqnarray}
where $S\subset \Sigma$ runs over all pure subcomplexes. 
Here $|\partial S|$ denotes the length of the boundary (the number of edges) and $A(S)$ and $A(\Sigma)$ denote the area of $S$ and $\Sigma$, i.e. the number of 2-simplexes. 

Note that in (\ref{chee}) one may always assume that $S\subset \Sigma$ is strongly connected. Indeed, if $S=\cup_j S_j$ are the strongly connected components of $S$ then $|\partial S|= \sum_j |\partial S_j|$ and $A(S) = \sum_j A(S_j)$ and therefore,
$$\frac{|\partial S|}{A(S)} \ge \min_j \frac{|\partial S_j|}{A(S_j)}.$$

In the case when $\Sigma$ is homeomorphic to the sphere $S^2$ one can require the subcomplex $S\subset \Sigma$ which appears in the formula (\ref{chee}) to be homeomorphic to a disc. Indeed, if $S\subset \Sigma$ is strongly connected and $\frac{|\partial S|}{A(S)}=h(S)$, $A(S) \le A(\Sigma)/2$, consider the strongly connected components  of 
the closure of the complement $\overline{\Sigma-S}=\cup_j D_j$. Each $D_j$ is a disc and $|\partial S|=\sum_j |\partial D_j|$ and 
$A(S) \le A(\Sigma - S)=\sum_j A(D_j)$. Hence
$$h(\Sigma) = \frac{|\partial S|}{A(S)} \ge \frac{\sum_j|\partial D_j|}{\sum_j A(D_j)}\ge \min_j \frac{|\partial D_j|}{A(D_j)}$$
and in particular for some $j_0$ one has $\frac{|\partial D_{j_0}|}{A(D_{j_0})}\le h(\Sigma)$. From now on we may assume that $A(D_{j_0})>A(\Sigma)/2$ since otherwise the proof is complete. Consider the disc $C=\overline{\Sigma-D_{j_0}}$. Then $A(C) \le A(\Sigma)/2$ and
$$A(C) \ge A(S), \quad |\partial C|\le |\partial S|$$ and we obtain 
$$h(\Sigma) = \frac{|\partial S|}{A(S)} \ge \frac{|\partial C|}{A(C)}.$$
Thus the value $h(\Sigma)$ in (\ref{chee}) is achived on subdiscs. 

Our next two results are deterministic. We show that the isoperimetric constant can be used to estimate above the value $M(Y)$.  

\begin{lemma}\label{lmcheeger}
Let $Y$ be a 2-complex with $I(Y)\ge c >0$. Let $f: \Sigma\to Y$ be a homotopically nontrivial simplicial map where $\Sigma$ is homeomorphic to $S^2$ and 
$A(\Sigma) = M(Y)$. Then $h(\Sigma)\ge c$. 
\end{lemma}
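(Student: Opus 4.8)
The plan is to argue by contradiction, exploiting the minimality $A(\Sigma)=M(Y)$ together with the isoperimetric bound $I(Y)\ge c$. Suppose $h(\Sigma)<c$. Then by the definition of the Cheeger constant, and by the reduction established in the text just before the lemma, there is a subcomplex $D\subset\Sigma$ homeomorphic to a disc with $A(D)\le A(\Sigma)/2$ and $|\partial D|/A(D)<c$. Write $\gamma=f|_{\partial D}:\partial D\to Y$ for the restriction of $f$ to the boundary circle of this disc; it is a simplicial loop in $Y$ of length $|\gamma|=|\partial D|$.

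The key observation is that $\gamma$ is null-homotopic in $Y$: it bounds the disc $f|_D:D\to Y$. Hence the isoperimetric inequality $I(Y)\ge c$ applies and yields a simplicial filling $V\to Y$ of $\gamma$ with $A(V)\le c^{-1}|\gamma|=c^{-1}|\partial D|<A(D)$, the last inequality being exactly the assumption $|\partial D|/A(D)<c$. Now replace the disc $D$ inside $\Sigma$ by the disc $V$: since $\partial D=\partial(\overline{\Sigma-D})$ and both $D$ and $V$ are discs with the same boundary circle, the complex $\Sigma':=(\overline{\Sigma-D})\cup_{\partial D} V$ is again homeomorphic to $S^2$, and it admits a simplicial map $f':\Sigma'\to Y$ which agrees with $f$ on $\overline{\Sigma-D}$ and is the filling map on $V$. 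One has $A(\Sigma')=A(\overline{\Sigma-D})+A(V)<A(\overline{\Sigma-D})+A(D)=A(\Sigma)=M(Y)$.

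To reach a contradiction it remains to check that $f':\Sigma'\to Y$ is homotopically nontrivial, for then $M(Y)\le A(\Sigma')<M(Y)$. This is where the main care is needed: one must verify that cutting out $D$ and re-gluing $V$ does not change the homotopy class of the spherical map. Since $D$ is a disc and $V$ is a disc with the same boundary loop $\gamma$, the maps $f|_D$ and $f'|_V=V\to Y$ differ by an element of $\pi_2(Y)$ (the sphere obtained by gluing $f|_D$ to $f'|_V$ along $\partial D$), and one checks that $[f']=[f]-\delta$ for that sphere $\delta$. A priori $[f']$ could vanish; but if it did, then $\delta$ would be a nontrivial element of $\pi_2(Y)$ represented by the sphere $D\cup_{\partial D}V$, whose area is $A(D)+A(V)<A(D)+A(D)\le A(\Sigma)/2+A(\Sigma)/2=A(\Sigma)=M(Y)$ — again contradicting the minimality of $M(Y)$. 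Either way the minimality of $A(\Sigma)$ is violated, so the assumption $h(\Sigma)<c$ is untenable and $h(\Sigma)\ge c$. I expect this last bookkeeping of homotopy classes — making precise that at least one of $f'$ or the exchanged sphere $\delta$ is a nontrivial small spherical map — to be the only delicate point; the surgery and the isoperimetric estimate themselves are routine.
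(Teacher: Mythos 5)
Your proposal is correct and follows essentially the same route as the paper: both arguments take the Cheeger-minimizing subdisc, use the isoperimetric hypothesis to produce a strictly smaller filling of its boundary loop, and then perform the same cut-and-paste surgery, observing that at least one of the two resulting spheres (your $\Sigma'$ and $\delta$) must be homotopically nontrivial while having fewer than $M(Y)$ faces. The paper merely packages the same dichotomy slightly differently, first showing $f|_S$ is an area-minimizing filling and then contradicting $I(Y)\ge c$, but the content is identical.
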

\begin{proof} Assume that $h(\Sigma)<c$. Then (due to our discussion above) there exists a simplicial subdisc $S\subset \Sigma$ with $A(S) \le A(\Sigma)/2$ and $\frac{|\partial S|}{A(S)}<c$. 
The curve $\gamma=f|\partial S$ is a null-homotopic loop in $Y$ and $f|S: S\to Y$ is a bounding disk for $\gamma$. We claim that this disc has the smallest area among all bounding discs for $\gamma$ in $Y$. Indeed, if there existed a spanning disc $b: D\to Y$, $b|\partial D=\gamma$, with $A(D)<A(S)$ then we may define two 
maps of the sphere $S^2\to Y$, both having smaller area than $f$.
These maps are $\phi: S\cup D \to Y$ and 
$\phi': S'\cup D\to Y$ where $S'=\overline{\Sigma-S}$. Here $\phi|S = f|S$, $\phi|D=b=\phi'|D$, $\phi'|S'=f|S'$. 
Clearly at least one of $\phi$ and $\phi'$ is 
not null-homotopic, since if both $\phi$ and $\phi'$ were null-homotopic then the original map $S^2 \to Y$ was null-homotopic as well. 
Thus we obtain 
$A(S)=A_Y(\gamma)$ and 
$\frac{|\gamma|}{A_Y(\gamma)}<c$
contradicting our assumption $I(Y)\ge c$. 
\end{proof}

\begin{corollary}\label{cory}
Let $Y$ be a 2-complex with $I(Y)\ge c >0$. Then 
\begin{eqnarray} 
M(Y) \le \left(\frac{16}{c}\right)^2.
\end{eqnarray}
\end{corollary}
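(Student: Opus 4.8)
The plan is to reduce the statement, via Lemma \ref{lmcheeger}, to a purely combinatorial isoperimetric bound for triangulated spheres. If $\pi_2(Y)=0$ then $M(Y)=0$ and there is nothing to prove, so assume $\pi_2(Y)\neq 0$ and fix a homotopically nontrivial simplicial map $f\colon\Sigma\to Y$ with $\Sigma$ homeomorphic to $S^2$ and $A(\Sigma)=M(Y)=:N$. Lemma \ref{lmcheeger} gives $h(\Sigma)\ge c$. Hence it is enough to establish the deterministic inequality
$$h(\Sigma)\cdot\sqrt{A(\Sigma)}\ \le\ 16$$
for an arbitrary triangulated $2$-sphere $\Sigma$: combined with $h(\Sigma)\ge c$ and $A(\Sigma)=N$ this yields $\sqrt N\le 16/c$, i.e. $M(Y)=N\le(16/c)^2$.

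To prove this inequality I would exhibit one well chosen subdisc. The goal is a simple simplicial cycle $\gamma$ in the $1$-skeleton of $\Sigma$ which separates $\Sigma$ into two discs $D$ and $D'$ with
$$\tfrac{1}{3} N\ \le\ A(D)\ \le\ \tfrac{1}{2} N\ \le\ A(D')\qquad\text{and}\qquad |\gamma|\ \le\ 5\sqrt N$$
(any balanced cycle separator of length $O(\sqrt N)$ will do; the precise constants must be matched to the target $16$). Taking $S=D$ in the definition (\ref{chee}) of $h(\Sigma)$ — legitimate since $A(D)\le A(\Sigma)/2$ — then gives
$$h(\Sigma)\ \le\ \frac{|\partial D|}{A(D)}\ =\ \frac{|\gamma|}{A(D)}\ \le\ \frac{5\sqrt N}{N/3}\ =\ \frac{15}{\sqrt N}\ \le\ \frac{16}{\sqrt N},$$
which is exactly the desired bound.

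The one substantive point is the existence of such a balanced short separating cycle. This is a planar-separator phenomenon — the dual graph of $\Sigma$ is a cubic planar graph on $N$ vertices — and it can either be quoted (the simple cycle separator theorem for planar triangulations) or proved in situ. A direct proof runs through breadth-first layering in the dual graph: with $S_0\subset S_1\subset\cdots\subset S_R=\Sigma$ the sublevel sets of the breadth-first search, each $\partial S_r$ is a disjoint union of simple cycles, the areas $A(S_r)$ increase from $1$ to $N$, and $\sum_r|\partial S_r|$ counts the edges of the (cubic) dual graph joining consecutive layers, so $\sum_r|\partial S_r|\le\tfrac{3}{2} N$; this produces a level $r$ with $|\partial S_r|$ small whenever the areas do not jump, and one then passes to a single boundary cycle bounding a subdisc, which only decreases the ratio $|\partial S|/A(S)$ exactly as in the reduction to subdiscs carried out just before Lemma \ref{lmcheeger}. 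When the areas do jump — a single layer carrying more than $N/3$ faces — one recurses into that layer (a planar subsurface of the sphere). The main obstacle is precisely this balanced-separator input: breadth-first sublevel sets are short on average but need not be area-balanced, so one must work to secure both properties simultaneously (or invoke the planar separator theorem) and then reconcile the numerical constants so as to land at $16$; everything else — the passage through Lemma \ref{lmcheeger} and the final arithmetic — is formal.
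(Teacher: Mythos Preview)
Your reduction is exactly the paper's: use Lemma \ref{lmcheeger} to get $h(\Sigma)\ge c$ for a minimal nontrivial sphere $\Sigma$ with $A(\Sigma)=M(Y)$, and then combine with the universal bound $h(\Sigma)\le 16/\sqrt{A(\Sigma)}$ for triangulated spheres. The only difference is that the paper does not reprove this last inequality but simply quotes it as a theorem of Papasoglu \cite{Pap}; your BFS/cycle-separator sketch is heading in the direction of Papasoglu's argument, but you correctly flag that securing both balance and shortness simultaneously, with the specific constant $16$, is the nontrivial part --- and you do not actually carry it out. So your proposal is structurally identical to the paper's proof, with the key isoperimetric input left as an acknowledged black box rather than cited.
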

\begin{proof} Papasoglu \cite{Pap} proved the inequality $$h(\Sigma) \le \frac{16}{\sqrt{A(\Sigma)}}$$ valid for any simplicial sphere. By Lemma \ref{lmcheeger}, one 
has  
$h(\Sigma)>c$ for the 
homotopically nontrivial map $\Sigma \to Y$ with the minimal $A(\Sigma)$. This implies that $A(\Sigma) \le 16^2/c^2$ which is equivalent to our statement. 
\end{proof}

\begin{proof}[Proof of Theorem \ref{thmupper}] By Theorem \ref{hyp} one has $I(Y') > c_\epsilon$, a.a.s. where $c_\epsilon>0$ depends only on $\epsilon$. 
Thus, in view of Corollary \ref{cory}, the inequality (\ref{mless}) is satisfied with $C=256/c^2_\epsilon$. 
\end{proof}


Next we characterise aspherical subcomplexes of random 2-complexes:

\begin{theorem}\label{thmasph} Assume that $p\ll n^{-1/2-\epsilon}$ for a fixed $\epsilon >0$. 
Then a random 2-complex $Y\in Y(n,p)$ has the following property with probability tending to one as $n\to \infty$: 
for any subcomplex $Y'\subset Y$ the following properties are equivalent:
 \begin{description}
  \item[(A)] $Y'$ is aspherical;
  \item[(B)] $Y'$ contains no subcomplexes $S\subset Y'$ with at most $2\epsilon^{-1}$ faces which are homeomorphic to the sphere $S^2$, the projective plane ${\mathbf {RP}}^2$ or the complexes $Z_2, Z_3$ shown in Figure \ref{fig6}. 
\end{description}
In particular, a random 2-complex $Y\in Y(n,p)$ has the following property with probability tending to one as $n\to \infty$: 
any subcomplex $Y'\subset Y$ with $b_2(Y')=0$ is aspherical if and only if  
\begin{description}
  \item[(B')] $Y'$ contains no subcomplexes $S\subset Y'$ with at most $2\cdot \epsilon^{-1}$ faces which are homeomorphic to the 
  projective plane ${\mathbf {RP}}^2$.
 \end{description}    
\end{theorem}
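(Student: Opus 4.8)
The plan is to prove the implication $(A)\Rightarrow(B)$ by the trivial observation that the listed complexes $S^2$, $\mathbf{RP}^2$, $Z_2$, $Z_3$ are all non-aspherical (each has non-trivial $\pi_2$: the sphere obviously, $\mathbf{RP}^2$ has universal cover $S^2$, and $Z_2, Z_3$ retract onto or collapse to $S^2$), so the presence of such a subcomplex would, together with the results of \S\ref{secwh}, contradict asphericity of $Y'$; more directly, if $Y'$ is aspherical then it cannot contain a non-aspherical subcomplex because the Whitehead-type argument fails the other way, so I will instead argue the contrapositive of $(B)\Rightarrow(A)$. The substance is the implication $(B)\Rightarrow(A)$. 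So suppose $Y'\subset Y$ is not aspherical, i.e. $\pi_2(Y')\ne 0$, and we must produce one of the four listed subcomplexes with at most $2\epsilon^{-1}$ faces.

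The first step is to invoke Theorem \ref{thmupper}: a.a.s., for every subcomplex $Y'\subset Y$ with $\pi_2(Y')\ne 0$ one has $M(Y')\le C_\epsilon$, so there is a simplicial sphere $\Sigma$ with $A(\Sigma)\le C_\epsilon$ and a homotopically non-trivial simplicial map $f:\Sigma\to Y'$. Replacing $Y'$ by the image $X=f(\Sigma)$, I get a subcomplex $X\subset Y'$ with a bounded number of faces carrying a non-trivial spherical class. The next step is to pass to a minimal cycle: since $f$ is surjective onto $X$ and homotopically non-trivial, $X$ carries a non-trivial element of $H_2$, so $b_2(X)\ge 1$ (one should check the image of the fundamental class of $\Sigma$ is non-zero in $H_2(X)$, using that a homotopically non-trivial map of $S^2$ into a 2-complex is non-trivial in $H_2$ by the Hurewicz theorem applied to the universal cover, or more elementarily that $\pi_2$ of a 2-complex injects into $H_2$ of the universal cover and hence detects the class — this is where some care is needed). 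Thus $X$ contains a minimal cycle $Z\subset X$. Now apply Theorem \ref{hyp} to bound the number of faces of $Z$ (and of $X$), and Lemma \ref{containment} together with the computation $\tilde\mu(Z)=\frac12+\frac{k}{f(Z)}$ (with $k=1$ for $\mathbf{RP}^2$ and $Z_4$, $k=2$ for $S^2$, etc.) to conclude that $\tilde\mu(Z)>\frac12+\epsilon$, hence $\mu(Z)>\frac12$, so Theorem \ref{lm4} applies: $Z$ is homeomorphic to $Z_1=S^2$, $Z_2$, $Z_3$, or $Z_4=\mathbf{RP}^2\cup\Delta^2$. In the first three cases $Z$ is already one of the listed types (for $Z_1$ a sphere, for $Z_2, Z_3$ the complexes in Figure \ref{fig6}); in the $Z_4$ case the $\mathbf{RP}^2$ subcomplex of $Z$ (obtained by cutting along the triangle of degree-3 edges, as in the proof of Theorem \ref{lm4}) is the required projective plane. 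In every case the face-count bound from Theorems \ref{hyp}/\ref{thmupper} plus the $\tilde\mu$ estimate forces at most $2\epsilon^{-1}$ faces (using exactly the arithmetic of Corollaries \ref{corsphere} and \ref{noprojplanes}).

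The hard part — the main obstacle — is bridging "$\pi_2(Y')\ne0$ and $M(Y')$ bounded" to "$Y'$ contains a \emph{small embedded} minimal cycle of one of the four homeomorphism types." One subtlety is that the map $f:\Sigma\to Y'$ need not be an embedding and the image $X$ need not be pure or closed; I would handle this by passing to the pure part, collapsing free faces, and then extracting a minimal cycle, checking at each stage that $\tilde\mu$ does not drop below $\frac12+\epsilon$ (this monotonicity is exactly what the containment arguments in the earlier corollaries rely on). A second subtlety is that the minimal cycle $Z\subset X\subset Y'$ must genuinely sit inside $Y'$ as a subcomplex of controlled size — this is automatic once we know $X\subset Y'$ has boundedly many faces and $Z\subset X$. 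Finally the "in particular" statement is immediate: if $b_2(Y')=0$ then by the exact sequence $H_2(Y')\to H_2(S)$ no subcomplex $S\subset Y'$ can be a sphere, $Z_2$, or $Z_3$ (all of which have $b_2=1$ and would inject into $H_2(Y')$), so among the four types only $\mathbf{RP}^2$ can occur, giving $(B')$.
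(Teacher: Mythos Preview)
Your approach to $(B)\Rightarrow(A)$ has a genuine gap at the step where you claim $b_2(X)\ge 1$ for the image $X=f(\Sigma)$. A homotopically nontrivial map $S^2\to X$ need \emph{not} be nontrivial in $H_2(X;\Z)$: the Hurewicz argument you sketch only shows the class is nontrivial in $H_2(\tilde X)$ of the universal cover, and the map $H_2(\tilde X)\to H_2(X)$ is not injective in general. The standard counterexample is exactly the one that matters here: the double cover $S^2\to\mathbf{RP}^2$ is homotopically nontrivial but zero in $H_2(\mathbf{RP}^2;\Z)=0$. So the image $X$ can perfectly well have $b_2(X)=0$, and then it contains no minimal cycle at all, and Theorem~\ref{lm4} gives you nothing. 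The paper treats this case separately: when $b_2(f(\Sigma))=0$, one applies Lemma~\ref{btwo01} (which you do not invoke) to conclude that $f(\Sigma)$ is an iterated wedge of projective planes, and then extracts a small $\mathbf{RP}^2$ from that decomposition. Without this second case your argument simply misses the projective-plane output.

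Your handling of $(A)\Rightarrow(B)$ is also muddled. You correctly notice that ``aspherical $\Rightarrow$ no non-aspherical subcomplex'' is the Whitehead conjecture and cannot be assumed, but then you don't replace it with anything. The right argument is direct: if $Y'$ contains an embedded $S^2$, $Z_2$, or $Z_3$, lift it to the universal cover (it is simply connected or has a simply connected piece) to get a nontrivial class in $H_2(\tilde Y')\cong\pi_2(Y')$; for the $\mathbf{RP}^2$ case one needs the theorem of Cockcroft that an aspherical 2-complex contains no projective plane. The contrapositive manoeuvre you mention does not produce $(A)\Rightarrow(B)$.
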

\begin{proof} We first show that {\bf {(A) $\Rightarrow$ (B)}}. It is obvious that an aspherical 2-complex cannot contain as a subcomplex none of 
$S^2$ or $Z_2$, $Z_3$;
 otherwise there would be a nontrivial spherical homology class. By a theorem of W.H. Cockcroft \cite{CC}, an aspherical 2-complex cannot contain a projective plane. 

Next we show that {\bf {(B) $\Rightarrow$ (A)}} a.a.s.
Let $Y'\subset Y$ be a subcomplex where $Y\in Y(n,p)$ is random. Assume that $Y'$ contains no subcomplexes with at most $2/\epsilon$ faces which are homeomorphic to either $S^2$, ${\mathbf {RP}}^2$ or $Z_2, Z_3$, see above. 
If $Y'$ is not aspherical then by Theorem \ref{thmupper}, $M(Y')\le C$ where $C$ depends only on $\epsilon$. 
There exist finitely many isomorphism types $\{S_j\}_{j\in J}$ 
of triangulations of $S^2$ with at most $C$ faces. By Lemma \ref{containment} the probability that a random 2-complex $Y$ contains 
an image $\phi_j(S_j)$ satisfying 
$\tilde \mu(\phi_j(S_j))< 1/2+\epsilon$ tends to zero as $n\to \infty$. Thus we only have to consider the images 
of homotopically nontrivial simplicial maps $\phi_j:S_j\to Y'$ satisfying $\tilde \mu(\phi_j(S_j))\ge 1/2+\epsilon$. 

If the second Betti number of the image is nonzero, $b_2(\phi_j(S_j))\not=0$, then the image $\phi_j(S_j)$ contains a minimal cycle $Z$ satisfying $\tilde \mu(Z) \ge 1/2+\epsilon$. By Theorem \ref{lm4}, such $Z$ must be homeomorphic to one of the complexes $Z_1, Z_2, Z_3, Z_4$. For $i=1,2,3$ one has
$$\epsilon \, \le\,  \tilde \mu(Z_i)-\frac{1}{2} \, \le\,  \frac{2}{f(Z_i)}$$
and hence $f(Z_i)\le 2/\epsilon$. For $i=4$, we have $Z_4=P^2\cup \Delta^2$ and 
$$\mu(P^2) =\frac{1}{2} + \frac{1}{f(P^2)} \ge \tilde \mu(Z_4) \ge \frac{1}{2} + \epsilon$$
implying that $f(P^2)\le 2\epsilon^{-1}$. Now we may use our assumptions concerning $Y'$, implying that $Y'$ contains no subcomplexes homeomorphic to $Z_1, \dots, Z_4$ leading to contradiction. 

Consider now the remaining case $b_2(\phi_j(S_j))=0$. By Lemma \ref{containment} one has 
$$\mu(\phi_j(S_j))\ge \tilde \mu(\phi_j(S_j))\ge \frac{1}{2} +\epsilon.$$
Now, applying Lemma \ref{btwo01}, we see that the image $\phi_j(S_j)$ is an iterated wedge of projective planes. 
The argument similar to the one used in the previous paragraph shows that each of these projective planes $P^2$ has at most $2\epsilon^{-1}$ faces. Hence we obtain a contradiction to our assumption.

\end{proof}

The following results are corollaries of Theorem \ref{thmasph}:

\begin{corollary} \label{whitehead} Assume that $p\ll n^{-1/2-\epsilon}$ where $\epsilon >0$ is fixed.  
Then a random 2-complex $Y\in Y(n,p)$ has the following property with probability tending to one as $n\to \infty$: 
any aspherical subcomplex $Y'\subset Y$ satisfies the Whitehead conjecture, i.e. if a subcomplex $Y'\subset Y$ is aspherical then all subcomplexes of $Y'$ are also aspherical.
\end{corollary}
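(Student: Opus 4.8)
The plan is to derive this directly from Theorem \ref{thmasph}, which gives (a.a.s.) a combinatorial criterion for asphericity of subcomplexes that is hereditary in an obvious way. First I would fix a random complex $Y\in Y(n,p)$ lying in the probability-one event of Theorem \ref{thmasph}, so that for \emph{every} subcomplex $W\subset Y$ the equivalence $(\mathrm{A})\Leftrightarrow(\mathrm{B})$ holds: $W$ is aspherical if and only if $W$ contains no subcomplex with at most $2\epsilon^{-1}$ faces homeomorphic to $S^2$, ${\mathbf{RP}}^2$, $Z_2$ or $Z_3$. Now let $Y'\subset Y$ be an aspherical subcomplex and let $Y''\subset Y'$ be an arbitrary subcomplex. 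Since $Y'$ is aspherical, criterion $(\mathrm{B})$ holds for $Y'$: it contains no forbidden subcomplex of the listed type with at most $2\epsilon^{-1}$ faces. Any subcomplex of $Y''$ is a fortiori a subcomplex of $Y'$, so $Y''$ also contains no such forbidden subcomplex; hence criterion $(\mathrm{B})$ holds for $Y''$. Applying the equivalence of Theorem \ref{thmasph} once more — now to the subcomplex $Y''\subset Y$ — we conclude that $Y''$ is aspherical.

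The only subtlety to check is that Theorem \ref{thmasph} is applied to \emph{subcomplexes of $Y$}, and both $Y'$ and $Y''$ are indeed subcomplexes of $Y$ (being subcomplexes of the subcomplex $Y'$); this is immediate from transitivity of the subcomplex relation, so nothing further is needed. There is essentially no obstacle here: the entire content has been packaged into Theorem \ref{thmasph}, and the corollary is just the observation that the combinatorial obstruction in condition $(\mathrm{B})$ is monotone under passage to subcomplexes — if a complex has no bad subcomplex, neither does any of its subcomplexes. One should perhaps also remark that a possibly disconnected $Y''$ is handled componentwise, asphericity being a property of each connected component, but since the forbidden complexes $S^2, {\mathbf{RP}}^2, Z_2, Z_3$ are all connected, condition $(\mathrm{B})$ and its consequences apply to each component separately without change.

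Thus, on the probability-one event furnished by Theorem \ref{thmasph}, every aspherical subcomplex $Y'\subset Y$ has all of its subcomplexes aspherical, which is precisely the assertion of the Whitehead conjecture for $Y'$. $\qed$
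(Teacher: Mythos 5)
Your proposal is correct and coincides with the paper's own argument: both apply the equivalence $(\mathrm{A})\Leftrightarrow(\mathrm{B})$ of Theorem \ref{thmasph} to $Y'$ to obtain property $(\mathrm{B})$, observe that $(\mathrm{B})$ is inherited by every subcomplex $Y''\subset Y'$, and apply the equivalence again to conclude $Y''$ is aspherical. The extra remarks on transitivity of the subcomplex relation and on disconnected $Y''$ are harmless elaborations of the same proof.
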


\begin{proof}
Let $Y'\subset Y$ be aspherical. Then by the previous Theorem, $Y'$ has property {\bf {(B)}}. Hence any subcomplex $Y'' \subset Y'$ has property {\bf {(B)}}. 
Applying Theorem \ref{thmasph} again we obtain that $Y''$ is aspherical. This completes the proof. 
\end{proof}

\begin{corollary} Assume that $p\ll n^{-1/2-\epsilon}$ where $\epsilon >0$ is fixed.  
Then a random 2-complex $Y\in Y(n,p)$ has the following property with probability tending to one as $n\to \infty$: 
a subcomplex $Y'\subset Y$ is aspherical if and only if any subcomplex $S\subset Y'$ with at most $2\epsilon^{-1}$ faces is aspherical. 
\end{corollary}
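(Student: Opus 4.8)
The plan is to deduce this directly from Theorem \ref{thmasph} together with Corollary \ref{whitehead}; no new probabilistic input is required. I would work on the (a.a.s.) event on which the conclusions of Theorem \ref{thmasph} and Corollary \ref{whitehead} hold simultaneously for every subcomplex $Y'\subset Y$, and prove the two implications separately.

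For the forward direction, assume $Y'$ is aspherical. Then by Corollary \ref{whitehead} every subcomplex of $Y'$ is aspherical; in particular every subcomplex $S\subset Y'$ with at most $2\epsilon^{-1}$ faces is aspherical, which is the required conclusion.

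For the converse, suppose every subcomplex $S\subset Y'$ with at most $2\epsilon^{-1}$ faces is aspherical. The key observation is that none of the complexes $S^2$, ${\mathbf {RP}}^2$, $Z_2$, $Z_3$ is aspherical: $S^2$, $Z_2$ and $Z_3$ each carry a nontrivial second homology class (indeed have $\pi_2\neq 0$), while ${\mathbf {RP}}^2$ has universal cover $S^2$, which is not contractible (equivalently, by Cockcroft's theorem an aspherical 2-complex cannot contain a projective plane). Consequently $Y'$ can contain no subcomplex with at most $2\epsilon^{-1}$ faces homeomorphic to any of $S^2$, ${\mathbf {RP}}^2$, $Z_2$, $Z_3$; that is, $Y'$ satisfies property {\bf (B)} of Theorem \ref{thmasph}. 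Applying the implication {\bf (B)} $\Rightarrow$ {\bf (A)} of Theorem \ref{thmasph}, we conclude that $Y'$ is aspherical.

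I do not expect a genuine obstacle here: the argument is a short combination of the already-established equivalence {\bf (A)} $\Leftrightarrow$ {\bf (B)} and the Whitehead-type Corollary \ref{whitehead}. The only point that must be stated with care is that each of the four forbidden complexes fails to be aspherical, which for ${\mathbf {RP}}^2$ relies on the fact that its universal cover is the $2$-sphere (or on Cockcroft's theorem, already cited in the proof of Theorem \ref{thmasph}).
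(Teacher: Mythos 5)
Your proposal is correct and follows essentially the same route as the paper: the forward direction via Corollary \ref{whitehead} and the converse via the implication \textbf{(B)} $\Rightarrow$ \textbf{(A)} of Theorem \ref{thmasph}, after observing that none of $S^2$, ${\mathbf {RP}}^2$, $Z_2$, $Z_3$ is aspherical. The paper's own proof is just a two-line citation of these two facts; your write-up supplies the same argument with the small verification spelled out.
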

\begin{proof} Indeed, in one direction the result follows from Corollary \ref{whitehead}. In the other direction the result follows from the implication 
{\bf (B) $\Rightarrow $ (A)} of Theorem \ref{thmasph}. 
\end{proof}

\begin{corollary} Assume that $p\ll n^{-1/2-\epsilon}$ where $\epsilon >0$ is fixed.  
Then a random 2-complex $Y\in Y(n,p)$ has the following property with probability tending to one as $n\to \infty$: 
any subcomplex $Y'\subset Y$ with $H_2(Y';\Z_2)=0$ is aspherical. 
\end{corollary}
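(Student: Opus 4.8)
The plan is to deduce this corollary directly from Theorem \ref{thmasph}, using only elementary facts about $\Z_2$-homology and the list of forbidden subcomplexes appearing in condition \textbf{(B)}. Recall that Theorem \ref{thmasph} holds a.a.s., so I may assume $Y\in Y(n,p)$ satisfies its conclusion. Let $Y'\subset Y$ be a subcomplex with $H_2(Y';\Z_2)=0$. It suffices to verify that $Y'$ satisfies property \textbf{(B)}, i.e. that $Y'$ contains no subcomplex $S$ with at most $2\epsilon^{-1}$ faces homeomorphic to $S^2$, ${\mathbf{RP}}^2$, $Z_2$ or $Z_3$; then asphericity of $Y'$ follows from the implication \textbf{(B)}$\Rightarrow$\textbf{(A)}.

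The key step is the observation that each of the four candidate subcomplexes $S$ — namely $S^2$, ${\mathbf{RP}}^2$, $Z_2$ and $Z_3$ — has $H_2(S;\Z_2)\not=0$. This is clear for $S^2$; for ${\mathbf{RP}}^2$ one has $H_2({\mathbf{RP}}^2;\Z_2)=\Z_2$; and for $Z_2$ (the sphere with two points identified) and $Z_3$ (the sphere with two adjacent arcs identified) the quotient map $S^2\to Z_i$ shows that the fundamental $\Z_2$-cycle of $S^2$ pushes forward to a nonzero class, since in each case $Z_i$ is obtained from $S^2$ by identifications along a contractible subcomplex, so $Z_i\simeq S^2\vee(\text{circles})$ and $H_2(Z_i;\Z_2)=\Z_2$. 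Alternatively, each $Z_i$ is a closed pure $2$-complex in which every edge has even degree (degrees $2$ only, in fact), so the sum of all $2$-faces is a $\Z_2$-cycle, and it is not a boundary for dimension reasons; hence $H_2(Z_i;\Z_2)\not=0$.

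Now suppose, for contradiction, that $Y'$ contains such a subcomplex $S$. By naturality of $\Z_2$-homology, the inclusion $S\hookrightarrow Y'$ induces a map $H_2(S;\Z_2)\to H_2(Y';\Z_2)$. In each of the four cases the fundamental $\Z_2$-class $[S]\in H_2(S;\Z_2)$ is carried by the sum of all $2$-simplexes of $S$, and since $S$ is a full subcomplex of $Y'$ with no identifications, no $2$-simplex of $S$ is glued to any further $2$-simplex of $Y'$ in a way that would kill this cycle at the chain level; more precisely, the chain $\sum_{\sigma\in S}\sigma$ remains a $\Z_2$-cycle in $Y'$ and is nonzero in $H_2(Y';\Z_2)$ because it is already a nonzero element of the subgroup generated by the faces of $S$, which injects. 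Thus $H_2(Y';\Z_2)\not=0$, contradicting the hypothesis. Actually the cleanest argument avoids chain-level subtleties: it is enough to note that $b_2(Y';\Z_2)=0$ forces, via Lemma \ref{containment}-style considerations, that $Y'$ cannot contain any closed subcomplex carrying a nontrivial $\Z_2$-homology class, which is exactly the content of condition \textbf{(B)}.

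The only possible obstacle is the verification that the induced map $H_2(S;\Z_2)\to H_2(Y';\Z_2)$ is injective on the fundamental class, rather than merely the existence of a nonzero $\Z_2$-cycle somewhere in $Y'$. This is handled by observing that $S$ is a \emph{subcomplex}: the chain group $C_2(S;\Z_2)$ is a direct summand of $C_2(Y';\Z_2)$ as a $\Z_2$-vector space (it is spanned by a subset of the faces), and the fundamental $\Z_2$-cycle of $S$, being supported entirely on faces of $S$, cannot be a $\Z_2$-boundary in $Y'$ unless it were already a $\Z_2$-boundary using only chains supported on a neighbourhood of $S$; a dimension count (or direct inspection of each of the four small complexes) rules this out. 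With this in hand, $H_2(Y';\Z_2)\not=0$, contradiction; hence $Y'$ satisfies \textbf{(B)}, and by Theorem \ref{thmasph} it is aspherical.
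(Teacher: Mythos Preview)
Your approach is the same as the paper's: verify condition \textbf{(B)} of Theorem \ref{thmasph} by showing that each of $S^2$, ${\mathbf{RP}}^2$, $Z_2$, $Z_3$ carries a nontrivial $\Z_2$ two-cycle, so that the presence of any of them in $Y'$ would force $H_2(Y';\Z_2)\not=0$. The paper dispatches this in one line.

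However, your justification of the injectivity $H_2(S;\Z_2)\to H_2(Y';\Z_2)$ is muddled and in one place wrong. The invocation of ``Lemma \ref{containment}-style considerations'' is irrelevant: that lemma concerns embeddability thresholds for random complexes and has nothing to do with the homological question here. Your chain-level discussion (``no $2$-simplex of $S$ is glued to any further $2$-simplex of $Y'$ in a way that would kill this cycle'', ``cannot be a $\Z_2$-boundary in $Y'$ unless it were already a $\Z_2$-boundary using only chains supported on a neighbourhood of $S$'') circles around the point without stating it. The actual reason is a one-liner: $Y'$ is a $2$-dimensional complex, so $C_3(Y';\Z_2)=0$ and hence $H_2(Y';\Z_2)$ coincides with the group of $\Z_2$ $2$-cycles; any nonzero $2$-cycle supported on the subcomplex $S$ is then automatically a nonzero element of $H_2(Y';\Z_2)$. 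No further analysis is needed.

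A minor inaccuracy: it is not true that every triangulation of $Z_3$ has all edge degrees equal to $2$ (in the paper's analysis of Case~C in Theorem \ref{lm4}, the relevant triangulations have one edge of degree $4$). What you need is only that the degrees are even, or more directly that $Z_2\simeq S^2\vee S^1$ and $Z_3\simeq S^2$, so $H_2(Z_i;\Z_2)=\Z_2$ for $i=2,3$.
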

\begin{proof}
If $H_2(Y';\Z_2)=0$ then $Y'$ cannot have subcomplexes homeomorphic to $S^2$, ${\mathbf {RP}}^2$, $Z_2$ and $Z_3$. Therefore $Y'$ is aspherical by Theorem 
\ref{thmasph}.  
\end{proof}

\begin{corollary}\label{cor35} Assume that $p\ll n^{-3/5}$. Then for a random 2-complex $Y\in Y(n,p)$ the fundamental group $\pi_1(Y)$ has cohomological dimension at most 2, a.a.s. In particular, under this assumption on $p$ the fundamental group $\pi_1(Y)$ has no torsion, a.a.s.
Moreover, for $p\ll n^{-3/5}$, any subcomplex $Y'\subset Y$ with $b_2(Y')=0$ is aspherical a.a.s. 
\end{corollary}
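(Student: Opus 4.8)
The plan is to fix $\epsilon=1/10$, so that $p\ll n^{-3/5}=n^{-1/2-\epsilon}$ and Theorems \ref{hyp} and \ref{thmasph} become available with this value of $\epsilon$, and then to bootstrap the asphericity criterion of Theorem \ref{thmasph}. The first step I would carry out is a containment argument ruling out the small projective planes, $Z_2$'s and $Z_3$'s. Let $\mathcal F$ be the finite set of all triangulations of ${\mathbf {RP}}^2$, $Z_2$ and $Z_3$ with at most $2\epsilon^{-1}=20$ faces. Using the formula (\ref{mu}) one checks that a triangulation of ${\mathbf {RP}}^2$ with $f$ faces has $\mu=1/2+1/f$, and that triangulations of $Z_2$ and $Z_3$ likewise have $\mu=1/2+1/f$ with $f\ge 12$, resp.\ $f\ge 10$; since $\tilde\mu\le\mu$ and the $6$-vertex triangulation of ${\mathbf {RP}}^2$ has $\tilde\mu=3/5$ by Theorem 27 of \cite{CCFK}, every $X\in\mathcal F$ satisfies $\tilde\mu(X)\le 3/5$, with equality attained. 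Hence $\beta:=\max_{X\in\mathcal F}\tilde\mu(X)=3/5$, and since $p\ll n^{-3/5}=n^{-\beta}$, Lemma \ref{containment}(1) shows that a.a.s.\ $Y$ contains no subcomplex isomorphic to a member of $\mathcal F$.

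Granting this, the \emph{moreover} statement is immediate: by Theorem \ref{thmasph}, a.a.s.\ a subcomplex $Y'\subset Y$ is aspherical if and only if it has no subcomplex with at most $20$ faces homeomorphic to $S^2$, ${\mathbf {RP}}^2$, $Z_2$ or $Z_3$. If $b_2(Y')=0$ then $Y'$ contains no subcomplex homeomorphic to $S^2$ (such a subcomplex would carry a nontrivial integral $2$-cycle) and, by the previous paragraph, none homeomorphic to ${\mathbf {RP}}^2$, $Z_2$ or $Z_3$ either; hence $Y'$ is aspherical. Working on the intersection of the two a.a.s.\ events, this holds for every such $Y'\subset Y$ simultaneously.

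For the bound on cohomological dimension I would, on the same a.a.s.\ event, construct an aspherical subcomplex $Y^\ast\subset Y$ with $\pi_1(Y^\ast)\cong\pi_1(Y)$ by deleting faces one at a time. If the current subcomplex $W\subset Y$ is not aspherical, i.e.\ $\pi_2(W)\ne 0$, then Theorem \ref{thmasph} together with the first paragraph produces a subcomplex $\Sigma\subset W$ homeomorphic to $S^2$ with at most $20$ faces. A triangulated sphere is a minimal cycle (every proper subcomplex has $b_2=0$) with $\mu(\Sigma)=1/2+2/f(\Sigma)>1/2$, so Corollary \ref{cor2} gives a face $\sigma\subset\Sigma$ whose boundary $\partial\sigma$ is null-homotopic in $\Sigma-\mathrm{Int}(\sigma)$, and a fortiori in $W-\mathrm{Int}(\sigma)$. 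Since attaching a $2$-cell along a null-homotopic loop yields $W\simeq (W-\mathrm{Int}(\sigma))\vee S^2$, deleting $\sigma$ leaves $\pi_1$ unchanged and drops $b_2$ by exactly one. As $b_2$ is a non-negative integer that strictly decreases at each step, the process terminates after at most $b_2(Y)$ deletions, necessarily at an aspherical $Y^\ast\subset Y$ with $\pi_1(Y^\ast)\cong\pi_1(Y)$; therefore ${\rm {cd}}(\pi_1(Y))={\rm {cd}}(\pi_1(Y^\ast))\le\dim Y^\ast=2$, a.a.s. The \emph{in particular} assertion then follows because a group of finite cohomological dimension is torsion free (a nontrivial finite cyclic subgroup would have infinite cohomological dimension).

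The main obstacle is the first paragraph. Everything after it is a fairly mechanical combination of Theorem \ref{thmasph}, Theorem \ref{lm4} (through Corollary \ref{cor2}) and Lemma \ref{containment}; the real content is to verify that, among the four homeomorphism types that Theorem \ref{thmasph} leaves available, only the sphere has a triangulation that can still be found in $Y$ when $p$ reaches $n^{-3/5}$ — equivalently, that every triangulation of ${\mathbf {RP}}^2$, $Z_2$ and $Z_3$ with at most $20$ faces has $\tilde\mu\le 3/5$. This is exactly where the exponent $3/5$ enters, and where a miscounted minimal triangulation would break the argument, so the combinatorial bounds on $\mu$ (and the values $f\ge 12$ for $Z_2$, $f\ge 10$ for $Z_3$) must be established carefully.
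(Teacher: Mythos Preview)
Your argument is correct, but you have taken a detour the paper avoids. The difference lies in how $Z_2$ and $Z_3$ are handled. You place them into $\mathcal F$ together with ${\mathbf {RP}}^2$ and eliminate them via Lemma~\ref{containment}, which forces you to verify the minimal-triangulation bounds $f(Z_2)\ge 12$, $f(Z_3)\ge 10$ --- exactly the step you flag as ``the main obstacle.'' The paper runs the containment argument only for ${\mathbf {RP}}^2$, citing $\tilde\mu({\mathbf {RP}}^2)\le 3/5$ from \cite{CCFK}, and then treats $S^2$, $Z_2$, $Z_3$ uniformly as minimal cycles with $b_2=1$: (i) none of them can sit inside a subcomplex $Y'$ with $b_2(Y')=0$, which immediately gives the ``moreover'' statement; and (ii) each satisfies Corollary~\ref{cor2} (indeed for these three types \emph{every} face has null-homotopic boundary in its complement, as observed in the proof of Theorem~\ref{ordertwo}), so the deletion procedure removes a face from any copy of $S^2$, $Z_2$ or $Z_3$, not just spheres, while preserving $\pi_1$. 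This completely sidesteps the combinatorial verification you identify as the crux; your route works as well (the bounds you need do hold, via the inequality $e\le\binom{v}{2}$ combined with the values of $\chi$ and $L$), but the extra work buys nothing.
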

\begin{proof} We shall apply Theorem \ref{thmasph} with $\epsilon= 1/10$, so that $1/2+\epsilon=3/5$ and $2\epsilon^{-1}=20$. 
Thus we shall consider embeddings 
of triangulated $S^2$, $Z_2$, $Z_3$ and ${\mathbf {RP}}^2$ into $Y\in Y(n,p)$ having at most $20$ faces.  

For any triangulation of ${\mathbf {RP}}^2$ one has
$\tilde \mu({\mathbf {RP}}^2)\le 3/5$ (see \cite{CCFK}, Corollary 22) and by Lemma \ref{containment}, no projective plane with at most 20 faces is contained in $Y\in Y(n,p)$ under the assumption $p\ll n^{-3/5}$. 

Thus $Y$ may only contain copies of $S^2, Z_2, Z_3$ with at most 20 faces. We produce now a sequence of subcomplexes 
$Y=Y_0\supset Y_1\supset Y_2\supset\dots\supset Y_k$ with $k\le b_2(Y)$ as follows. Each $Y_{i+1}$ is obtained from $Y_i$ by removing a face belonging to a subcomplex homeomorphic to $S^2, Z_2$ or $Z_3$. Then clearly $\pi_1(Y_{i+1}) = \pi_1(Y_i)$ and the last subcomplex $Y_k\subset Y$ 
is aspherical by Theorem \ref{thmasph}. 

Therefore, we obtain that there exists a 2-dimensional complex $Y_k$ having fundamental group $\pi_1(Y_k)=\pi_1(Y)$. 
This implies that the cohomological dimension of $\pi_1(Y)$ is at most $2$. 

As for the second statement of the Corollary, we observe that any subcomplex $Y'\subset Y$ with $b_2(Y')=0$ cannot contain $S^2, Z_2, Z_3$. It also cannot contain 
copies of the real projective plane, as explained above. Thus, by Theorem \ref{thmasph}, $Y'$ must be aspherical, a.a.s. 
\end{proof}

\begin{remark} {\rm The method of the proof of Corollary \ref{cor35} gives in fact a stronger result: for $p\ll n^{-3/5}$ a random 2-complex $Y\in Y(n,p)$ has the following property with probability tending to one: the fundamental group of any subcomplex $Y'\subset Y$ has cohomological dimension at most $2$, i.e. 
${cd}(\pi_1(Y'))\le 2$.
}\end{remark}

Using Theorem \ref{thmasph} one may generate random aspherical 2-complexes as follows. Start with a random 2-complex $Y\in Y(n,p)$ where 
$p\ll n^{-1/2-\epsilon}$. Let $\mathcal A_\epsilon=\{Z\}$ be the set of all isomorphism types of minimal cycles having at most $2\epsilon^{-1}$ faces with the property $\mu(Z) >1/2$. The set $\mathcal A_\epsilon$ is finite; each $Z\in \mathcal A_\epsilon$ is homeomorphic to one of the 2-complexes $Z_1, Z_2, Z_3, Z_4$ described in Theorem 
\ref{lm4}. Now one finds all subcomplexes of the random complex $Y$ which are isomorphic to the complexes $Z\in \mathcal A_\epsilon$ and removes randomly (or by applying certain rule) one of its faces satisfying the conditions of Corollary \ref{cor2}. 
The obtained complex $Y'\subset Y$ is aspherical, a.a.s., by Theorem \ref{thmasph}; clearly $\pi_1(Y')=\pi_1(Y)$. 

In the next statement we show that the aspherical 2-complex $Y'$ has large second Betti number and hence the fundamental group 
$\pi_1(Y')=\pi_1(Y)$
has cohomological dimension 2.  Combined with Corollary \ref{cor35}, Proposition \ref{remcomb} implies Theorem A from the Introduction. 

\begin{proposition} \label{remcomb} Assume that for some $c>3$ and $\epsilon >0$ one has 
\begin{eqnarray}\label{estim}
\frac{c}{n} \, <\,  p \, <\,  n^{-1/2-\epsilon}.
\end{eqnarray}
Then the second Betti number of the random aspherical 2-complex $Y'$ described in the preceding paragraph satisfies
\begin{eqnarray}\label{estim1}
n^2\cdot \frac{c-3}{8}\, \le\,  b_2(Y') \, \le \, n^{5/2-\epsilon},
\end{eqnarray}
a.a.s.
\end{proposition}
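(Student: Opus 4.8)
The plan is to estimate $b_2(Y')$ from above and below separately, using Euler-characteristic bookkeeping together with the standard first/second-moment facts about $Y(n,p)$.

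For the upper bound, I would note that $Y'$ is obtained from $Y$ by deleting faces, so $b_2(Y') \le b_2(Y) \le f(Y)$, the total number of 2-faces of $Y$. The number of faces of a random complex $Y\in Y(n,p)$ is a binomial random variable with mean $\binom n3 p \sim \frac{n^3 p}{6}$; under the hypothesis $p < n^{-1/2-\epsilon}$ this is at most (asymptotically) $n^{3-1/2-\epsilon}/6 = n^{5/2-\epsilon}/6$, and by concentration (Chernoff or Chebyshev) $f(Y) \le n^{5/2-\epsilon}$ a.a.s. This immediately gives the right-hand inequality in (\ref{estim1}).

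For the lower bound I would use the identity $b_2(Y') - b_1(Y') = \chi(Y') - b_0(Y') + \dots$, or more directly: since $Y'$ is aspherical and $2$-dimensional, $\chi(Y') = 1 - b_1(Y') + b_2(Y')$, hence $b_2(Y') \ge \chi(Y') - 1 = f(Y') - e(Y') + v(Y') - 1$, where $v,e,f$ count vertices, edges and faces. Now $v(Y')=v(Y)\le n$. The complex $Y'$ still contains the full $1$-skeleton that is covered by its faces; a cleaner route is to observe that deleting a face from a minimal cycle of type $Z_1,\dots,Z_4$ with $\le 2\epsilon^{-1}$ faces changes $f$ by exactly $1$ and leaves $v,e$ unchanged, and these deletions are performed a.a.s. only finitely many... — actually not: to get $b_2(Y')\gtrsim n^2$ I must argue that only a negligible number of faces get removed, or rather that $\chi(Y')$ stays large. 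So the key computation is: $\chi(Y) = v(Y) - e(Y) + f(Y)$; here $v(Y)=n$, $e(Y)=\binom n2 \sim n^2/2$ (every edge is present since $Y$ contains the full $1$-skeleton of $\Delta_n$, or at least every edge used is present a.a.s.), and $f(Y)\sim \binom n3 p$. With $p > c/n$ we get $f(Y) \ge \frac{c}{6}\cdot\frac{n^2(1-o(1))}{1} \cdot \frac{1}{1}$... more precisely $f(Y)\sim \frac{c}{n}\binom n3 \sim \frac{c n^2}{6}$, so $\chi(Y) \ge \frac{cn^2}{6} - \frac{n^2}{2} + o(n^2) = n^2\cdot\frac{c-3}{6}(1-o(1))$ a.a.s. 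Then $b_2(Y)\ge \chi(Y)-1 \ge n^2\frac{c-3}{6}(1-o(1))$. Finally, passing from $Y$ to $Y'$: each face removed in the construction decreases $b_2$ by at most $1$, and the number of removed faces is bounded by the number of subcomplexes of $Y$ isomorphic to some $Z\in\mathcal A_\epsilon$ that one must destroy — but this could a priori be large, so the correct argument is that $\chi(Y')\ge \chi(Y) - (\text{number of deleted faces})$ and each deleted face was in some $Z\in\mathcal A_\epsilon$, of which there are $O(n^{v(Z)}p^{f(Z)})$ in expectation; since $\tilde\mu(Z)>1/2$ and $p< n^{-1/2-\epsilon}$, Lemma~\ref{containment}-type first-moment bounds force this count to be $o(n^2)$. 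Thus $b_2(Y')\ge \chi(Y')-1 \ge n^2\frac{c-3}{6}(1-o(1)) - o(n^2) \ge n^2\frac{c-3}{8}$ a.a.s.

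The main obstacle is the step controlling the number of faces deleted in the construction of $Y'$: one needs that the subcomplexes $Z_1,Z_2,Z_3,Z_4$ with at most $2\epsilon^{-1}$ faces, being of the form $\mu(Z)>1/2$ hence $\tilde\mu(Z) > 1/2 \ge 1/2 + \epsilon'$ for the relevant small $\epsilon'$, occur only $o(n^2)$ times a.a.s. in $Y\in Y(n,p)$ with $p\ll n^{-1/2-\epsilon}$ — this is a routine first-moment computation ($\mathbb E(\#\text{copies of }Z) \sim n^{v(Z)}p^{f(Z)} = (n^{\mu(Z)}p)^{f(Z)}$ with $\mu(Z)>1/2$, so each such expectation is $\ll n^{2\mu(Z)f(Z)-\text{something}}$; one checks the exponent is $<2$), together with Markov's inequality. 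The other subtlety is ensuring the constant $\frac{c-3}{8}$ rather than $\frac{c-3}{6}$ absorbs all the $(1-o(1))$ and $o(n^2)$ error terms, which is immediate for $n$ large.
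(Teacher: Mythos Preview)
Your approach is essentially the same as the paper's: bound $b_2(Y')\le f(Y)$ and use concentration for the upper bound; bound $b_2(Y)\ge \chi(Y)-1 = f(Y)-\binom{n-1}{2}$ and use concentration for the lower bound on $b_2(Y)$; then use a first-moment plus Markov argument to show the number $k$ of deleted faces is $o(n^2)$, and conclude $b_2(Y')\ge b_2(Y)-k$.

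One correction to your first-moment step: it is not the \emph{lower} bound $\tilde\mu(Z)>1/2$ that forces $\E(k_Z)=o(n^2)$ --- that inequality points the wrong way. What you need is the \emph{upper} bound $\mu(Z)\le \tfrac12+\tfrac{2}{f(Z)}$, which holds for every $Z\in\mathcal A_\epsilon$ because $Z$ is closed ($L(Z)\le 0$) and $\chi(Z)\le 2$; see formula (\ref{mu}). This gives $v(Z)\le f(Z)/2+2$, and hence
\[
\E(k_Z)\le n^{v(Z)}p^{f(Z)}\le n^{f(Z)/2+2}\cdot n^{-(1/2+\epsilon)f(Z)}=n^{2-\epsilon f(Z)}\le n^{2-4\epsilon},
\]
since $f(Z)\ge 4$. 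Summing over the finite set $\mathcal A_\epsilon$ and applying Markov at the threshold $n^{2-3\epsilon}$ then gives $k=o(n^2)$ a.a.s., exactly as you outline.
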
 
This result is similar to Theorem 3 from \cite{CF} and its proof uses the same sequence of arguments. 

%
%

\begin{proof} 
Let $f_2, b_2: Y(n,p)\to \Z$ denote the number of 2-simplexes in a random complex and the second Betti number, viewed as random variables. 
Note that $f_2$ is binomially distributed with expectation $p\binom n 3$. We have 
$
f_2 - \binom {n-1}{2}\, \le\,  b_2\, \le\, f_2 . 
$
The inequality (2.6) on page 26 of \cite{JLR} with $t=n^{7/4}$ gives
$$\PP(f_2 \le p\binom n 3 -t) \le \exp\left(-\frac{t^2}{2p\binom n 3}\right) \le \exp\left(-\sqrt{n}\right).$$
We see that with probability at least $1- \exp\left(-\sqrt{n}\right)$, one has 
$$b_2\ge f_2 - \binom {n-1}{2}\ge p{\binom n 3}- {\binom {n-1} 2}-t \ge \binom {n-1} 2 \cdot \left(\frac{c-3}{3}\right) -t \ge n^2\cdot \frac{c-3}{7}
 $$
for large $n$; here $b_2=b_2(Y)$. 

For any $Z\in {\mathcal A}_\epsilon$ consider the random variable $k_Z: Y(n,p) \to \Z$, where for $Y\in Y(n,p)$ the value 
$k_Z(Y)$ is the number of subcomplexes of $Y$ which are isomorphic (as simplicial complexes) to $Z$.
The expectation of $k_Z$ satisfies $\E(k_Z) \le n^{v(Z)}p^{f_2(Z)}$. For any $Z\in {\mathcal A}_\epsilon$ one has $L(Z)\le 0$ and $\chi(Z) \le 2$ implying 
$$\mu(Z)=\frac{v(Z)}{f_2(Z)} \le \frac{1}{2} + \frac{2}{f_2(Z)},$$
as follows from formula (\ref{mu}). Hence we have $$v(Z) \le f_2(Z)/2 +2$$
and therefore
\begin{eqnarray*}
\E(k_Z) &\le & n^{v(Z)}p^{f_2(Z)}\\
&\le& n^{f_2(Z)/2+2}p^{f_2(Z)}\\
&\le& n^{f_2(Z)/2+2}n^{{(-1/2 -\epsilon )}f_2(Z)}\\
& =& n^{2-\epsilon f_2(Z) } \le n^{2-4\epsilon}.
\end{eqnarray*}

Let $k=\sum_{Z\in {\mathcal A}_\epsilon} k_Z$ be the sum random variable. Then we obtain
$$\E(k) =\sum_Z \E(k_Z) \le |{\mathcal A}_\epsilon|\cdot n^{2-4\epsilon}.$$
The Markov inequality $\PP(k\ge t) \le \frac{\E(k)}{t}$ with $t=n^{2-3\epsilon}$ gives
$\PP(k\ge n^{2-3\epsilon}) \le |{\mathcal A}_\epsilon|\cdot n^{-\epsilon}.$ 
Thus, with probability tending to one as $n\to \infty$, one has
$$b_2(Y') \ge b_2(Y) - k(Y) \ge n^2\cdot \frac{c-3}{7} - n^{2 -3\epsilon} \ge n^2\cdot \frac{c-3}{8}>0$$
for $n\to \infty$. This proves the left inequality in (\ref{estim1}).

From inequality (2.5) on page 26 of \cite{JLR} with $t=n^{7/4}$ one obtains that with probability at least $1- \exp(-\sqrt{n})$ one has
$f_2\, \le\,  p\binom n 3 +t $ and hence
$ b_2(Y') \le  b_2(Y) \le f_2 \le  p\binom n 3 +t \le n^{5/2-\epsilon}.$
This proves the right inequality in (\ref{estim1}).
%
\end{proof}

As above, for a positive integer $m$, let $M_m$ denote the Moore surface, 
i.e. the quotient space of the disc $D^2\subset \C$ (the unit disc on the complex plane) where every point $z\in \partial D^2$ is identified with $z^m$.
\begin{corollary} If $p\ll n^{-1/2-\epsilon}$ then a random 2-complex $Y\in Y(n,p)$ with probability tending to one as $n\to\infty$ has the following property: 
no subcomplex $Y'\subset Y$ homeomorphic to a Moore surface $M_m$ with $m\ge 3$. 
\end{corollary}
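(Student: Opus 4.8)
The plan is to combine the uniform upper bound on minimal spheres (Theorem \ref{thmupper}) with a matching lower bound valid for triangulated Moore surfaces, and then to conclude by the containment criterion of Lemma \ref{containment}.

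First I would record the relevant topology. If $Y'\subset Y$ is a subcomplex homeomorphic to $M_m$ with $m\ge 3$, then $Y'$ is obtained from a triangulated $2$-disc $e^2$ (with $A(Y')$ faces) by identifying its boundary circle $m$-to-one onto a simple closed curve $C$ --- the \emph{singular circle}, whose edges have degree $m$ while all remaining edges of $Y'$ have degree $2$. The universal cover $\widetilde{Y'}$ then consists of an $(m\cdot|C|)$-cycle $\tilde C$ together with $m$ discs $\tilde e^2_0,\dots,\tilde e^2_{m-1}$, each glued along its entire boundary onto $\tilde C$; hence $\widetilde{Y'}\simeq\bigvee_{m-1}S^2$ and $H_2(\widetilde{Y'};\Z)\cong\Z^{m-1}\ne 0$. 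In particular $\pi_2(Y')\ne 0$, so the invariant $M(Y')$ is defined.

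The key step is the inequality $M(Y')\ge A(Y')$. Let $\Sigma\cong S^2$ carry a homotopically nontrivial simplicial map $f\colon\Sigma\to Y'$ with the smallest possible number of $2$-faces, $A(\Sigma)=M(Y')$. Since $\Sigma$ is simply connected, $f$ lifts to a simplicial map $\tilde f\colon\Sigma\to\widetilde{Y'}$, still homotopically nontrivial, so by the Hurewicz theorem $\tilde f_\ast[\Sigma]\ne 0$ in $H_2(\widetilde{Y'};\Z)$. Writing $\tilde f_\ast[\Sigma]=\sum_i n_i[\tilde e^2_i]$ (with $\sum_i n_i=0$ and not all $n_i$ zero), choose $i$ with $n_i\ne 0$; then in the chain $\tilde f_\ast[\Sigma]=\sum_{\tau}\pm\tilde f(\tau)$ every $2$-face of $\tilde e^2_i$ appears with coefficient $\pm n_i\ne 0$, so each of the $A(Y')$ faces of $\tilde e^2_i$ is the image of at least one $2$-simplex of $\Sigma$. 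Thus $A(\Sigma)\ge A(Y')$, whence $M(Y')\ge A(Y')$. Combining this with Theorem \ref{thmupper} yields that, a.a.s., every subcomplex $Y'\subset Y$ homeomorphic to some $M_m$ with $m\ge 3$ satisfies $A(Y')\le C_\epsilon$, i.e.\ has a uniformly bounded number of faces.

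Finally I would invoke Lemma \ref{containment}. There are only finitely many isomorphism types of triangulations $X$ of Moore surfaces $M_m$, $m\ge 3$, with $f(X)\le C_\epsilon$. For each of them, formula (\ref{mu}) gives $2\chi(X)+L(X)=2+|C|(2-m)\le 2+3(2-3)=-1<0$, hence $\mu(X)<1/2$ and therefore $\tilde\mu(X)<1/2$. Putting $\beta=\max_X\tilde\mu(X)<1/2$ and noting $p\ll n^{-1/2-\epsilon}\ll n^{-\beta}$, Lemma \ref{containment}(1) shows that a.a.s.\ $Y$ contains none of these complexes $X$; together with the bound of the previous paragraph this gives that a.a.s.\ $Y$ has no subcomplex homeomorphic to $M_m$ for any $m\ge 3$. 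The one point requiring genuine care is the lower bound $M(Y')\ge A(Y')$, that is, the identification of $\widetilde{Y'}$ and the chain-level counting; everything else is routine given Theorem \ref{thmupper}, identity (\ref{mu}) and Lemma \ref{containment}.
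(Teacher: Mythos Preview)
Your proof is correct but takes a genuinely different route from the paper's. The paper's argument is a one-line application of Theorem~\ref{thmasph}: a Moore surface $M_m$ with $m\ge 3$ is not aspherical, yet it contains no subcomplex homeomorphic to $S^2$, ${\mathbf{RP}}^2$, $Z_2$ or $Z_3$; this directly contradicts the (A)$\Leftrightarrow$(B) equivalence of Theorem~\ref{thmasph}, so no such $Y'$ can occur a.a.s.

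You instead bypass Theorem~\ref{thmasph} and the classification of minimal cycles entirely, going back to the more primitive Theorem~\ref{thmupper}. Your key step is the explicit lower bound $M(Y')\ge A(Y')$ for any triangulated $M_m$, obtained by lifting a minimal essential sphere to the universal cover $\widetilde{Y'}$ (an $m$-page book over a circle) and reading off multiplicities from the cycle decomposition $\tilde f_\ast[\Sigma]=\sum_i n_i[\tilde e^2_i]$. This, together with $\mu(M_m)<1/2$ from (\ref{mu}) and Lemma~\ref{containment}, closes the argument. Your approach is longer but more self-contained: it does not rely on Theorem~\ref{lm4} or on the assertion (left unproved in the paper) that $M_m$ with $m\ge 3$ contains no embedded projective plane, which for even $m$ is not settled by the obvious $H_2$-injectivity argument. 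The paper's route, on the other hand, is much shorter once Theorem~\ref{thmasph} is in hand and illustrates how that theorem subsumes many such non-embedding statements.
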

\begin{proof}
This is a corollary of Theorem \ref{thmasph} since for $m\ge 3$ the Moore surface $M_m$ (a) contains no subcomplexes homeomorphic to the sphere, to the real projective space and to complexes $Z_2, Z_3$ (shown in Figure \ref{fig6}) and (b) the Moore surface $M_m$ is not aspherical. 
\end{proof}

A similar argument provides an alternative proof of Corollary \ref{noprojplanes}.

%
\vskip 2cm
\section{Appendix: Proof of Theorem \ref{hyp}}\label{app}

The proof of Theorem \ref{hyp} given below is similar to the arguments of \cite{BHK} and is based on two auxiliary results: (1) the local-to-global principle of Gromov \cite{Gromov87} and on (2) Theorem \ref{uniform} giving uniform isoperimetric constants for complexes satisfying $\tilde \mu(X)\ge 1/2+\epsilon$.

The local-to-global principle of Gromov can be stated as follows:

\begin{theorem}\label{localtoglobal} Let $X$ be a finite 2-complex and let $C>0$ be a constant such that any pure subcomplex $S\subset X$ having at most 
$(44)^3\cdot C^{-2}$ two-dimensional simplexes satisfies $I(S)\ge C$. Then $I(X)\ge C\cdot 44^{-1}$. 
\end{theorem}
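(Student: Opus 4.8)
The plan is to argue by contradiction and to isolate a purely combinatorial statement about planar simplicial discs, which is exactly Gromov's cutting lemma. Suppose $I(X)<C\cdot 44^{-1}$. By the definition of $I(X)$ there is a null-homotopic simplicial loop in $X$ whose area is more than $44\,C^{-1}$ times its length; using the reduction to prime loops recorded in \S\ref{sec22}, I would pick such a loop $\gamma$ with $A_X(\gamma)$ as small as possible and with $\gamma$ prime, so that a minimal simplicial filling of $\gamma$ is a genuine simplicial disc $D$ equipped with a simplicial map $b\colon D\to X$ satisfying $b|_{\partial D}=\gamma$, $A(D)=A_X(\gamma)=:N$ and $|\partial D|=|\gamma|=:L$; thus $N>44\,C^{-1}L$. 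The aim is to contradict this inequality.

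First I would establish, for every simplicial subdisc $D'\subseteq D$, the following. Since $D$ is a minimal diagram for $\gamma$, the subdisc $D'$ is a minimal filling of the loop $b|_{\partial D'}$, for otherwise one could splice a cheaper filling of $b|_{\partial D'}$ into $D$ and produce a filling of $\gamma$ with fewer than $N$ faces. Hence $A(D')=A_X(b|_{\partial D'})\le A_{b(D')}(b|_{\partial D'})$, since $b(D')\subseteq X$. If $A(D')\le(44)^3C^{-2}$, then $b(D')$ is a pure subcomplex of $X$ with at most $(44)^3C^{-2}$ faces, so the hypothesis gives $I(b(D'))\ge C$ and therefore
\[
|\partial D'|\ \ge\ C\cdot A(D').
\]
Taking $D'=D$ here in the case $N\le(44)^3C^{-2}$ already contradicts $N>44\,C^{-1}L$, so we may assume $N>(44)^3C^{-2}$. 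If instead $D'\subsetneq D$ is a \emph{proper} subdisc, then $A(D')<N=A_X(\gamma)$, so by the minimality of $\gamma$ the loop $b|_{\partial D'}$ is not a counterexample, i.e. $A(D')\le 44\,C^{-1}|\partial D'|$; that is,
\[
|\partial D'|\ \ge\ C\cdot 44^{-1}\cdot A(D').
\]
Thus inside $D$ every subdisc of area at most $(44)^3C^{-2}$ satisfies the isoperimetric inequality with the strong constant $C$, every proper subdisc satisfies it with the weak constant $C\cdot 44^{-1}$, but $D$ itself violates the weak one.

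What remains — and this is the real substance — is the combinatorial assertion that these properties of $D$ are incompatible; equivalently: \emph{a simplicial disc all of whose subdiscs of area at most $(44)^3C^{-2}$ satisfy $|\partial E|\ge C\,A(E)$ satisfies $|\partial D|\ge C\cdot 44^{-1}A(D)$.} I would prove this by Gromov's layered-cutting argument. Consider the combinatorial neighbourhoods $B_t\subseteq D$ of $\partial D$ (the faces at distance $\le t$ from $\partial D$); each complement $D\setminus B_t$ is a disjoint union of subdiscs, to which the weak inequality on proper subdiscs applies, so the length $\ell_t$ of the sphere $\partial B_t$ satisfies $\ell_t\ge C\cdot 44^{-1}\bigl(A(D)-A(B_t)\bigr)$, while elementary layer combinatorics give $\ell_t\le 2\bigl(A(B_t)-A(B_{t-1})\bigr)$. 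Together these force the "deep area" $A(D)-A(B_t)$ to decay geometrically in $t$, so that one of the spheres $\partial B_t$ is short compared with $C\cdot A(D)$. Cutting $D$ along a short such sphere separates it into the small deep discs, handled directly by the strong inequality, and the planar remnant $B_t$, handled by induction on area (in a version of the statement formulated for planar surfaces with several boundary circles). Since each interior edge of the cut is counted twice, one gets $\sum_i|\partial(\text{piece}_i)|=|\partial D|+2\cdot(\text{cut length})\ge C\cdot A(D)$, and the small cut length is absorbed into the slack between $C$ and $C\cdot 44^{-1}$, yielding $|\partial D|\ge C\cdot 44^{-1}A(D)$ — contradicting $L<C\cdot 44^{-1}N$ and completing the proof.

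The bookkeeping with minimal van Kampen diagrams in the first two steps is routine; the genuine obstacle is the quantitative cutting lemma — extracting a short cutting sphere, controlling the total cut length through the induction, and arranging that the gain from the stronger local constant $C$ on the small pieces pays for the cuts. This balancing is exactly where the factor $44$ and the exponent $3$ in $(44)^3C^{-2}$ are produced, and it is the classical core of Gromov's local-to-global principle \cite{Gromov87}; the arguments of \cite{BHK} may be followed for the details.
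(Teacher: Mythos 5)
Your proposal is correct and follows essentially the same route as the paper: the key step in both is the observation that a loop whose minimal filling has at most $(44)^3C^{-2}$ faces has that filling contained in a pure subcomplex $S$ with at most that many faces, so the hypothesis $I(S)\ge C$ forces the strong local isoperimetric inequality, after which everything is delegated to the Gromov local-to-global principle. The paper simply verifies the hypotheses of Theorem 3.9 of \cite{BHK} and cites it as a black box, whereas you unfold the same principle inside a minimal van Kampen diagram and sketch its cutting-lemma proof before likewise deferring the quantitative details to \cite{Gromov87} and \cite{BHK}; the content is the same.
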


Theorem \ref{localtoglobal} follows from Theorem 3.9 from \cite{BHK}. Indeed, suppose that the assumptions of Theorem \ref{localtoglobal} are satisfied. 
Let $\gamma: S^1\to X$ be a simplicial loop with $A_X(\gamma)<(44)^3\cdot C^{-2}= 44\rho^2$ where $\rho=44/C$. Then there is a pure subcomplex $S\subset X$ 
with at most $(44)^3\cdot C^{-2}$ faces, which contains $\gamma$ and the minimal spanning disc for $\gamma$ in $X$, 
such that $$\frac{|\gamma|}{A_S(\gamma)}\ge I(S) \ge C=44/\rho,$$ by our assumption, i.e. 
$$A_X(\gamma) =A_S(\gamma) \le \frac{\rho}{44} \cdot |\gamma|.$$ 
Applying Theorem 3.9 from \cite{BHK} we obtain $I(X) \ge \rho^{-1}= C/44$.

Let $X$ be a 2-complex satisfying $\tilde \mu(X) >1/2$. Then by Corollary \ref{thm6} the fundamental group of $X$ is hyperbolic as it is a free product of several copies of 
cyclic groups $\Z$ and $\Z_2$. Hence, $I(X)>0$. The following theorem gives a uniform lower bound for the numbers $I(X)$.

\begin{theorem}\label{uniform}
Given $\epsilon >0$ there exists a constant $C_\epsilon>0$ such that for any finite pure 2-complex $X$ with $\tilde \mu(X)\ge 1/2 + \epsilon$ one has $I(X) \ge C_\epsilon$. 
\end{theorem}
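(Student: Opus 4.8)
The plan is to reduce the uniform isoperimetric bound to a purely combinatorial statement about areas of spanning discs, using the structural results already established. First I would fix a finite pure 2-complex $X$ with $\tilde\mu(X)\ge 1/2+\epsilon$ and a null-homotopic simplicial prime loop $\gamma$ in $X$; by the Remark on prime loops it suffices to control the ratio $|\gamma|/A_X(\gamma)$ for such $\gamma$, and the spanning disc $V$ realizing $A_X(\gamma)$ is a genuine simplicial disc mapping to $X$ by a map $b\colon V\to X$ which is a homeomorphism on $\partial V\to \gamma$. Passing to the image $W=b(V)\subset X$ (its underlying subcomplex, made pure), we get a subcomplex carrying the bounded disc, and since $W\subset X$ we still have $\tilde\mu(W)\ge 1/2+\epsilon$. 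The key point is to bound $A_X(\gamma)=f(V)$ below by a linear function of $|\gamma|=|\partial V|$ with a constant depending only on $\epsilon$.

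Second, I would exploit the $\mu$-formula (\ref{mu}). For the minimal spanning disc $V$ itself one has a relation of the form $2v(V)=f(V)+2\chi(V)+L(V)$ with $\chi(V)=1$; here $L(V)=\sum_e(2-\deg_V e)$, and the edges contributing negatively to $L(V)$ are exactly the interior edges of degree $\ge 3$, while the boundary edges $\partial V$ have degree $1$ and contribute $+1$ each. The essential inequality to extract is that $\tilde\mu\ge 1/2+\epsilon$, applied to $V$ (or rather to the subcomplex $b(V)\subset X$, after controlling the identifications made by $b$), forces $v/f\ge 1/2+\epsilon$ on every subcomplex, hence $2\chi+L>0$ on every subcomplex, and iterating this down the sequence of subcomplexes obtained by peeling off faces gives a rigidity that caps the number of high-degree interior edges — or alternatively caps $f(V)$ in terms of $|\partial V|$ directly. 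Concretely I would argue: if $V$ is reduced (no interior free edges, which holds by minimality), then the number of boundary edges dominates $f(V)$ up to the constant $\epsilon^{-1}$, because each application of (\ref{mu}) on a nested subcomplex costs at least $2\epsilon f$ worth of "defect," and the total defect is controlled by $\chi$ plus the boundary length.

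Third — and this is where I expect the real work to be — one must handle the fact that $b\colon V\to X$ need not be injective: the image $b(V)$ may identify vertices or edges of $V$, so $\tilde\mu$ of the image is not literally $\tilde\mu$ applied to $V$. The standard device (this is the heart of the Babson–Hoffman–Kahle argument, cf. \cite{BHK}) is to pass to the "collapsed" or minimal-area disc where such identifications are pushed as far as possible, and then to observe that after doing so, either $b$ factors through an embedding — in which case the containment/counting estimates via $\tilde\mu(b(V))\ge 1/2+\epsilon$ apply directly — or $b$ identifies two simplices, which by minimality of the area can be used to produce a shorter or equal-length loop bounding a strictly smaller disc, a contradiction. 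So I would set up the argument as: among all minimal-area spanning discs for $\gamma$, choose one with the fewest identifications; show it must be embedded; then the subcomplex $S=b(V)\subset X$ is a simplicial disc with $\tilde\mu(S)\ge 1/2+\epsilon$, and the $\mu$-estimate above gives $f(S)\le c(\epsilon)\cdot|\partial S|=c(\epsilon)\cdot|\gamma|$, i.e. $A_X(\gamma)\le c(\epsilon)|\gamma|$, so $I(X)\ge C_\epsilon:=c(\epsilon)^{-1}$.

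The main obstacle, then, is not the $\mu$-bookkeeping (which is a direct manipulation of (\ref{mu}) and the observation $L\le 0$ on closed parts, with $L$ on a disc governed by the boundary), but rather the "embeddedness after minimization" step: one has to rule out, or handle, non-locally-injective spanning discs, and this requires a careful surgery/cut-and-paste argument on the domain disc $V$ of the type carried out in \cite{BHK}. Once embeddedness is in hand, the finiteness of isomorphism types of discs with $f\le c(\epsilon)|\partial V|$ is not even needed for this theorem — the linear bound is what matters — and the constant $C_\epsilon$ can be read off explicitly as roughly $\epsilon$ (up to a universal factor). I would therefore structure the final proof in two lemmas: a combinatorial lemma bounding the area of an embedded simplicial disc $S$ with $\tilde\mu(S)\ge 1/2+\epsilon$ by $O(\epsilon^{-1}|\partial S|)$, and a topological lemma reducing $A_X(\gamma)$ to such an embedded disc, with the latter being the substantive one.
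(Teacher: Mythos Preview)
The proposal has a genuine gap at the step you yourself flag as substantive. Your argument rests on the claim that a minimal-area spanning disc for a prime null-homotopic loop, chosen with the fewest identifications, must be embedded in $X$; you attribute this reduction to \cite{BHK}, but that is not what \cite{BHK} does, and the claim is not true in general. The difficulty is structural: the hypothesis $\tilde\mu\ge 1/2+\epsilon$ constrains only subcomplexes of $X$, hence the \emph{image} $b(V)$, whereas $A_X(\gamma)=f(V)$ counts faces in the \emph{domain}. When $b$ is far from injective these differ by an unbounded factor, so the $\mu$-bookkeeping in your second paragraph --- which tacitly applies the inequality to $V$ itself --- has no force. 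The paper makes this concrete: Remark~\ref{rmrk12} shows that for tight $X$ the minimal spanning disc is \emph{surjective} onto $X$, so whenever a tight $X$ with $\tilde\mu(X)\ge 1/2+\epsilon$ is not itself homeomorphic to a disc (and the closed examples covered by Lemma~\ref{lneg}, such as small triangulated projective planes, are exactly of this kind), no spanning disc can be embedded, regardless of surgery on the domain.

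The paper's actual route is as follows. After reducing to tight $X$ with $b_2(X)=0$ (Lemma~\ref{lm13}), the case $L(X)\le 0$ is handled by pure finiteness (Lemma~\ref{lneg}): such $X$ satisfy $f(X)\le\epsilon^{-1}$, hence only finitely many isomorphism types arise, each with hyperbolic $\pi_1$ by Corollary~\ref{thm6}, and one takes the minimum. For general tight $X$ one fixes a surjective fold-free minimal disc $b:D^2\to X$ and stratifies by \emph{multiplicity}: $X_i\subset X$ is generated by the faces with at least $i$ preimages, with connected components $\{X_\lambda\}_{\lambda\in\Lambda}$. The layers with $L(X_\lambda)\le 0$ are excised from $D^2$ and refilled via the relative finiteness bound (Lemma~\ref{lnegs}); for those with $L(X_\lambda)>0$ a combinatorial edge-count (Lemma~\ref{lpartial}) gives $\sum_{\Lambda^+}L(X_\lambda)\le|\partial D^2|$, and since $\mu(X_\lambda)\ge 1/2+\epsilon$ forces $f(X_\lambda)\le \frac{3}{2\epsilon}L(X_\lambda)$ on each such layer, summing produces a linear bound on $f(D^2)=\sum_\lambda f(X_\lambda)$ in terms of $|\gamma|$. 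The device you are missing is precisely this multiplicity decomposition: it is what lets the $\tilde\mu$-hypothesis, which lives on the image, control the area of the domain.
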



This statement is equivalent to Lemma 3.5 from \cite{BHK} and plays a crucial role in the proof of Theorem \ref{hyp}. 
A proof of Theorem \ref{uniform} (which uses the method of \cite{BHK} but is presented in a slightly different form) is given below 
in \S \ref{secthm10}. 

\begin{proof}[Proof of Theorem \ref{hyp} using Theorem \ref{localtoglobal} and Theorem  \ref{uniform}] Let $C_\epsilon$ be the constant given by Theorem
 \ref{uniform}. 
Consider the set $\mathcal S$ of isomorphism types of all pure 2-complexes having at most $44^3\cdot C_\epsilon^{-2}$ faces. Clearly, the set $\mathcal S$ is finite. We may present it as the disjoint union $\mathcal S= \mathcal S_1 \sqcup \mathcal S_2$ where any $S\in \mathcal S_1$ satisfies $\tilde \mu(S)\ge 1/2 +\epsilon$ while for 
$S\in \mathcal S_2$ one has $\tilde \mu(S) < 1/2 +\epsilon$. By Lemma \ref{containment}, a random 2-complex $Y\in Y(n, p)$ contain as subcomplexes complexes  $S\in \mathcal S_2$ with probability tending to zero as $n\to \infty$. Hence, $Y\in Y(n,p)$ may contain as subcomplexes only complexes $S\in \mathcal S_1$, a.a.s. 
By Theorem \ref{uniform}, any $S\in \mathcal S_1$ satisfies $I(S)\ge C_\epsilon$. Hence we see that with probability tending to one, any subcomplex $S$ of $Y$ having at most $44^3\cdot C_\epsilon^{-2}$ faces satisfies $I(S)\ge C_\epsilon$. 
Now applying Theorem \ref{localtoglobal} we obtain $I(Y')\ge 
C_\epsilon\cdot 44^{-1}=c_\epsilon$, for any subcomplex $Y'\subset Y$, a.a.s.
\end{proof}

\subsection{Proof of Theorem \ref{uniform}}\label{secthm10}

\begin{definition}
We will say that a finite 2-complex $X$ is tight if for any proper subcomplex $X'\subset X$, $X'\not= X$, one has
$I(X') > I(X).$
\end{definition}
Clearly, one has 
\begin{eqnarray}\label{ineq}
I(X) \ge \min \{I(Y)\} 
\end{eqnarray}
where $Y\subset X$ is a tight subcomplex. Since $\tilde \mu(Y) \ge \tilde \mu(X)$ for $Y\subset X$, it is obvious from (\ref{ineq}) that it is enough to prove Theorem \ref{uniform} under the additional assumption that $X$ is tight. 

\begin{remark}\label{rmrk12} {\rm Suppose that $X$ is pure and tight and suppose that $\gamma: S^1\to X$ is a simplicial loop with $|\gamma|\cdot A_X(\gamma)^{-1}$ less than the minimum of the numbers $I(X')$ where $X'\subset X$ is a proper subcomplex. Let $b: D^2\to X$ be a minimal spanning disc for $\gamma$; then $b(D^2)=X,$ i.e. $b$ is surjective. Indeed, if the image of $b$ does not contain a 2-simplex $\sigma$ then removing 
it we obtain a subcomplex $X'\subset X$ with $A_{X'}(\gamma)=A_X(\gamma)$ and hence $I(X') \le |\gamma|\cdot A_X(\gamma)^{-1}$ contradicting the assumption on $\gamma$. }
\end{remark}


\begin{lemma}\label{lm13} If $X$ is a tight complex with $\tilde \mu(X)>1/2$ then $b_2(X)=0$. 
\end{lemma}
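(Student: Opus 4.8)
The plan is to argue by contradiction. Suppose $X$ is tight, $\tilde\mu(X)>1/2$, but $b_2(X)\ge 1$. Since $b_2(X)\ge 1$, there is a minimal cycle $Z\subset X$, and since $\tilde\mu(X)>1/2$ we have $\mu(Z)\ge\tilde\mu(X)>1/2$; so Theorem \ref{lm4} applies and $Z$ is homeomorphic to one of $Z_1=S^2$, $Z_2$, $Z_3$, or $Z_4$. By Corollary \ref{cor2} there is a $2$-face $\sigma\subset Z$ whose boundary curve $\partial\sigma$ is null-homotopic in $Z-\int(\sigma)$, hence also null-homotopic in the larger complex $X-\int(\sigma)$. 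The idea is that $\sigma$ is then a ``superfluous'' face from the point of view of filling loops: removing it should not increase any isoperimetric ratio, contradicting tightness.

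First I would make the previous sentence precise. Set $X'=X-\int(\sigma)\subsetneq X$, a proper subcomplex. Tightness gives $I(X')>I(X)$, so I must produce a null-homotopic loop $\gamma$ in $X'$ with $|\gamma|\cdot A_{X'}(\gamma)^{-1}\le I(X)$, contradiction. The natural candidate: take any null-homotopic loop $\gamma$ in $X$ that realizes (or nearly realizes) the infimum $I(X)$, together with a minimal spanning disc $b\colon D^2\to X$. If the image of $b$ avoids $\sigma$ altogether, then $\gamma$ lies in $X'$ and $A_{X'}(\gamma)=A_X(\gamma)$, so $I(X')\le I(X)$, done. The interesting case is when $b$ uses $\sigma$. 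Here I would use that $\partial\sigma$ bounds a disc $V_0$ in $X'$ (its null-homotopy in $X-\int(\sigma)$, realized simplicially, possibly after subdividing or after appealing to the simplicial Van Kampen setup of \S\ref{sec22}); then each occurrence of $\sigma$ in the spanning disc $b$ can be excised and replaced by a copy of $V_0$, yielding a spanning disc for the same $\gamma$ whose image lies in $X'$. This shows $A_{X'}(\gamma)\le A_X(\gamma)+ (\text{number of }\sigma\text{-tiles})\cdot(A(V_0)-1)$, which is a bound in the \emph{wrong} direction; so a cruder but cleaner route is better: argue directly with a loop adapted to $\sigma$.

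The clean route, which I expect to be the intended one, is: consider $\gamma=b\circ i|_{\partial\sigma}$, i.e. take $\gamma$ to be (the image of) $\partial\sigma$ itself, a loop of length $3$. It is null-homotopic in $X'$ by Corollary \ref{cor2}, and a spanning disc for it in $X'$ is exactly the disc $V_0$ described above, with $A(V_0)$ faces. Then $I(X')\le 3/A(V_0)$. Now I need $3/A(V_0)\le I(X)$; equivalently $A(V_0)\ge 3/I(X)$. This last inequality is precisely the isoperimetric inequality in $X'$ for the null-homotopic loop $\partial\sigma$ — but wait, that is circular, since we are trying to bound $I(X')$. So instead I would compare against $X$ directly: in $X$, the loop $\partial\sigma$ bounds the single face $\sigma$, so $A_X(\partial\sigma)=1$ and $|\partial\sigma|\cdot A_X(\partial\sigma)^{-1}=3$, which forces $I(X)\le 3$ — unhelpful. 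This tension is the main obstacle, and it tells me the correct formulation is through Remark \ref{rmrk12}: take a loop $\gamma$ in $X$ realizing a ratio just below $\min\{I(X')\colon X'\subsetneq X\text{ proper}\}$ (possible if $X$ is tight, by choosing $\gamma$ with ratio close to $I(X)$); by Remark \ref{rmrk12} its minimal spanning disc $b$ is \emph{surjective} onto $X$, hence uses the face $\sigma$. Since $\partial\sigma$ is null-homotopic in $X-\int(\sigma)$ with some spanning disc $V_0\subset X-\int(\sigma)$, I can surger $b$ along $\sigma$: replace a collar of $\sigma$ in the domain $D^2$ by $V_0$, strictly removing the tile mapping to $\sigma$. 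This yields a spanning disc $b'$ for the \emph{same} $\gamma$ whose image is contained in $X-\int(\sigma)=X'$; consequently $\gamma$ is null-homotopic in $X'$ and $A_{X'}(\gamma)\le A_{X}(\gamma)+A(V_0)-1$.

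The final—and genuinely delicate—point is controlling $A(V_0)$: we need $A(V_0)\le A_X(\gamma)$-ish, or better, we need to choose $V_0$ of \emph{minimal} area among fillings of $\partial\sigma$ in $X'$ and combine with an Euler-characteristic / $\mu$-count. The honest fix is to not replace the tile by an arbitrary $V_0$ but to observe that in $Z$ itself (one of $Z_1,\dots,Z_4$, finitely many complexes of bounded size) the face $\sigma$ from Corollary \ref{cor2} can be chosen so that $\partial\sigma$ bounds a disc $V_0\subset Z-\int(\sigma)$ using \emph{all remaining faces of that piece}, and crucially $Z-\int(\sigma)$ is simply connected (check case by case: $S^2$ minus a face is a disc; $Z_2,Z_3$ minus the appropriate face collapse to a point or circle-wedge that is still handled; $Z_4$ minus the free triangle is $P^2$, and $\partial\sigma=P^1$ generates $\pi_1$, so one picks $\sigma$ inside the $P^2$ part whose boundary is null-homotopic there — this is exactly what Corollary \ref{cor2} guarantees). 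Because $Z-\int(\sigma)\subset X-\int(\sigma)$ and $\partial\sigma\sim 1$ in $Z-\int(\sigma)$, after the surgery the new disc $b'$ has image in a \emph{proper} subcomplex $X'\subsetneq X$ and the same boundary $\gamma$, with $A_{X'}(\gamma)\le A_X(\gamma) - 1 + A(V_0)$ where $A(V_0)\le f(Z)-1\le 2\epsilon^{-1}$ is an absolute constant. Hence
$$I(X')\le \frac{|\gamma|}{A_{X'}(\gamma)}\le \frac{|\gamma|}{A_X(\gamma)}\cdot\frac{A_X(\gamma)}{A_X(\gamma)-1+A(V_0)}.$$
For loops $\gamma$ of large enough area the right factor exceeds $1$ only by a bounded amount, so this does not immediately contradict tightness; the resolution, I believe, is that Lemma \ref{lm13} is really proved by a \emph{global} argument: one shows that if $b_2(X)\ge1$ then, using Corollary \ref{cor2} and Remark \ref{rmrk12}, the minimal spanning disc of an extremal loop can always be pushed off \emph{some} face, so $X$ fails to be ``filled'' by that loop, contradicting Remark \ref{rmrk12}'s surjectivity conclusion. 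Concretely: pick $\gamma$ extremal as in Remark \ref{rmrk12}, minimal spanning disc $b$ surjective; pick a minimal cycle $Z\subset X$ and $\sigma\subset Z$ as in Corollary \ref{cor2}; since $b$ is surjective it hits $\sigma$; surger as above to get $b'$ spanning $\gamma$ with image missing $\sigma$, so $b'$ is not surjective; but $A_{X'}(\gamma)\le|\gamma|/I(X')<\infty$ and $|\gamma|/A_{X'}(\gamma)\ge I(X')>I(X)\ge|\gamma|/A_X(\gamma)$ forces $A_{X'}(\gamma)<A_X(\gamma)$, i.e. the surgered disc is \emph{strictly smaller} — contradicting that $b$ was a minimal spanning disc for $\gamma$ in $X$ (since $b'$ is a spanning disc in $X\supset X'$ with fewer faces). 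That contradiction completes the proof. The hard part is the surgery lemma: verifying that replacing the single tile $\sigma$ by the disc $V_0\subset Z-\int(\sigma)$ genuinely decreases the face count, which requires knowing $V_0$ can be taken to be the \emph{minimal} filling and that minimal filling has area $< A(V_0$-collar$)+1$; this is where the explicit classification $Z\in\{Z_1,\dots,Z_4\}$ and the explicit choice of $\sigma$ in Corollary \ref{cor2} (a face whose removal leaves $\partial\sigma$ null-homotopic) do the real work.
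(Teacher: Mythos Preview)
Your proposal circles around the right idea but misses the one-line observation that makes the paper's proof immediate, and your final inequality chain contains an actual error.

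The paper's argument is this: once Corollary~\ref{cor2} hands you a face $\sigma\subset Z\subset X$ with $\partial\sigma$ null-homotopic in $X'=X-\int(\sigma)$, the inclusion $X'\hookrightarrow X$ induces an isomorphism $\pi_1(X')\to\pi_1(X)$ (attaching a 2-cell along a null-homotopic curve does not change $\pi_1$). Since $X$ and $X'$ have the same 1-skeleton, a simplicial loop $\gamma$ is null-homotopic in $X$ iff it is null-homotopic in $X'$. For any such $\gamma$ one trivially has $A_X(\gamma)\le A_{X'}(\gamma)$, hence $|\gamma|/A_X(\gamma)\ge |\gamma|/A_{X'}(\gamma)$, and taking the infimum over the \emph{same} set of loops gives $I(X)\ge I(X')$. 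Tightness says $I(X')>I(X)$; contradiction. No surgery, no Remark~\ref{rmrk12}, no control on the area of a replacement disc $V_0$.

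Your detour through explicit disc surgery is not needed, and where you finally try to close the argument you write
\[
|\gamma|/A_{X'}(\gamma)\ \ge\ I(X')\ >\ I(X)\ \ge\ |\gamma|/A_X(\gamma),
\]
but the last inequality is backwards: $I(X)$ is an \emph{infimum}, so $I(X)\le |\gamma|/A_X(\gamma)$, not $\ge$. Consequently you cannot conclude $A_{X'}(\gamma)<A_X(\gamma)$, and the ``surgered disc is strictly smaller'' contradiction does not follow. Your approach is salvageable, but differently: the surgery (or, more simply, the $\pi_1$-isomorphism above) shows $\gamma\sim 1$ in $X'$; then $A_{X'}(\gamma)\ge A_X(\gamma)$ by inclusion, so $|\gamma|/A_{X'}(\gamma)\le |\gamma|/A_X(\gamma)$; combined with the Remark~\ref{rmrk12} choice $|\gamma|/A_X(\gamma)<I(X')$ this yields $|\gamma|/A_{X'}(\gamma)<I(X')$, contradicting the definition of $I(X')$. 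All the worrying about the size of $V_0$ and the explicit structure of $Z_1,\dots,Z_4$ is irrelevant---the only thing needed from the surgery is that $\gamma$ becomes null-homotopic in $X'$, and that already follows from the $\pi_1$ observation without touching any disc.
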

\begin{proof} Assume that $b_2(X)\not=0$. Then there exists a minimal cycle $Z\subset X$ satisfying $\mu(Z)>1/2$. Hence, by Corollary \ref{cor2}, we may find a 2-simplex $\sigma\subset Z\subset X$
such that $\partial \sigma$ is null-homotopic in $Z-\sigma\subset X-\sigma=X'$. Note that $X'^{(1)}=X^{(1)}$ and a simplicial curve $\gamma: S^1\to X'$ is null-homotopic in $X'$ if and only if it is null-homotopic in $X$. Besides, $A_X(\gamma) \le A_{X'}(\gamma)$ and hence
$$\frac{|\gamma|}{A_X(\gamma)}\ge \frac{|\gamma|}{A_{X'}(\gamma)},$$
which implies that $I(X) \ge I(X') >I(X)$ -- contradiction. 
\end{proof}

%

\begin{lemma}\label{lneg} Given $\epsilon >0$ there exists a constant $C'_\epsilon>0$ such that for any finite pure tight connected 2-complex with $\tilde \mu(X) \ge 1/2+\epsilon$ and $L(X) \le 0$ one has $I(X) \ge C'_\epsilon$. 
\end{lemma}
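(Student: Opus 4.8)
The plan is to reduce the statement to a finite check by combining the classification of low-complexity complexes with $\tilde\mu \ge 1/2+\epsilon$ and the structural constraint $L(X)\le 0$. First I would observe that since $X$ is pure, closed (because $L(X)\le 0$ forces every edge to have degree $\ge 2$; more precisely $L(X)=\sum_e(2-\deg e)$, and $L(X)\le 0$ combined with tightness and $\tilde\mu(X)>1/2$ will be used to pin down the local structure) and connected, the hypothesis $\tilde\mu(X)\ge 1/2+\epsilon$ gives via the formula (\ref{mu}) applied to $X$ itself and to all its subcomplexes a strong restriction: every pure subcomplex $S\subset X$ satisfies $2\chi(S)+L(S)\ge 2\epsilon\cdot f(S)$, so complexes with many faces are impossible unless they have large Euler characteristic, which for $2$-complexes means large $b_2$ or large numbers of singular vertices/circle wedge summands.

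The key step is to bound $f(X)$ in terms of $\epsilon$. By Lemma \ref{lm13}, tightness together with $\tilde\mu(X)>1/2$ forces $b_2(X)=0$. Then Lemma \ref{btwo01} applies (once we know $X$ is closed, which follows from $L(X)\le 0$ together with purity — a pure complex with a degree-one edge would have $L(X)>0$, wait, rather: $L(X)\le 0$ does not immediately give closedness, so I would instead argue that for a tight complex with $L(X)\le 0$ one may assume $X$ closed after collapsing, or handle the boundary directly; in any case the relevant point is that $\mu(X)>1/2$, $b_2(X)=0$ and connectedness put $X$ into the list of iterated wedges of projective planes from Lemma \ref{btwo01}). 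An iterated wedge of $k$ projective planes has $\chi=1$ (each wedge summand contributes $\chi(P^2)=1$ and wedging subtracts $k-1$, giving $\chi = k - (k-1) = 1$, but actually one should recompute: $\chi(\vee_k P^2) = 1 + k\cdot(\chi(P^2)-1)\cdot\ldots$) — the precise count is that the total number of faces is the sum of the face counts of the pieces, and $\tilde\mu\ge 1/2+\epsilon$ applied to each projective-plane summand $P_i^2$ gives $f(P_i^2)\le 2\epsilon^{-1}$, hence $f(X) = \sum_i f(P_i^2) \le k\cdot 2\epsilon^{-1}$. This does not yet bound $k$, so the final ingredient is to observe that $\pi_1$ of an iterated wedge of projective planes is a free product of copies of $\Z_2$, which is hyperbolic with an isoperimetric constant bounded below in terms of the total number of faces — and here I would invoke Lemma \ref{lneg}'s own setting recursively, or more cleanly, note that tightness forbids iterated wedges with $k\ge 2$ entirely: removing a face from one wedge summand $P_i^2$ (which exists by Corollary \ref{cor2}-type reasoning applied to $P^2$, or simply because a $P^2$ with a face removed has the same $\pi_1$-image... no, it kills the $\Z_2$) — more carefully, tightness means $I(X')>I(X)$ for every proper $X'$, and I would show that for an iterated wedge $X = A\vee B$ one has $I(X)=\min(I(A),I(B))$ up to the wedge point, contradicting tightness unless $X$ has a single strongly connected component, i.e. $k=1$.

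Thus $X$ is (homeomorphic to) a single projective plane, $f(X)\le 2\epsilon^{-1}$, and there are only finitely many such complexes; setting $C'_\epsilon$ to be the minimum of $I(X)>0$ over this finite list (each is positive since $\pi_1=\Z_2$ is hyperbolic) completes the proof. The main obstacle I expect is the careful handling of the boundary/closedness issue and the verification that tightness genuinely rules out nontrivial wedges — the wedge decomposition interacts with the definition of $I$ through loops that cross the wedge point, and one must check that a null-homotopic loop in $A\vee B$ decomposes into pieces lying in $A$ and in $B$ so that $A_X(\gamma)$ and $|\gamma|$ split additively, making $I(A\vee B) = \min\{I(A), I(B)\}$; granting that, tightness forces $X$ to be strongly connected and hence a genuine closed surface with $b_1=0$, i.e. $\mathbf{RP}^2$, and the finiteness argument closes everything out.
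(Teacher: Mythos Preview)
Your approach takes a substantial detour and has a real gap, while the paper's argument is a two-line computation you already have all the ingredients for.

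The paper simply bounds $f(X)$ directly. From $\tilde\mu(X)\ge 1/2+\epsilon$ one has $\mu(X)\ge 1/2+\epsilon$, which via formula (\ref{mu}) reads
\[
f(X)\le \epsilon^{-1}\bigl(\chi(X)+L(X)/2\bigr).
\]
Lemma \ref{lm13} (tightness plus $\tilde\mu>1/2$) gives $b_2(X)=0$, hence $\chi(X)=1-b_1(X)\le 1$; combined with the hypothesis $L(X)\le 0$ this yields $f(X)\le \epsilon^{-1}$. There are only finitely many isomorphism types of pure $2$-complexes with at most $\epsilon^{-1}$ faces, each has $I(X)>0$ by Corollary \ref{thm6}, and one sets $C'_\epsilon$ to be the minimum. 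No classification, no closedness, no wedge analysis is needed.

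Your route through Lemma \ref{btwo01} breaks down exactly where you suspected: that lemma requires $X$ to be \emph{closed}, and $L(X)\le 0$ does not give this (a complex can have edges of degree $1$ compensated by edges of degree $\ge 3$). Collapsing free faces does not obviously preserve tightness or the inequality $L\le 0$, so this cannot be patched cheaply. Even if you could reach the iterated-wedge conclusion, you then spend effort bounding the number $k$ of wedge summands via a tightness argument about $I(A\vee B)$ --- plausible but entirely unnecessary, since the direct inequality above already bounds the \emph{total} number of faces without ever decomposing $X$. The moral: once you have $b_2(X)=0$ and $L(X)\le 0$, plug straight into (\ref{mu}) rather than invoking the structural classification.
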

This Lemma is similar to Theorem \ref{uniform} but it has an additional assumption that $L(X) \le 0$. It is clear from the proof that the assumption $L(X)\le 0$ can be replaced by any assumption of the type $L(X) \le 1000$, i.e. by any specific upper bound, without altering the proof. 
\begin{proof} We show that the number of isomorphism types of complexes $X$ satisfying the conditions of the Lemma is finite; hence the statement of the Lemma follows  by setting $C'_\epsilon=\min I(X)$ and using Corollary \ref{thm6} which gives $I(X)>0$ (since $\pi_1(X)$ is hyperbolic) and hence $C'_\epsilon>0$. 
The inequality 
$$\mu(X) = \frac{1}{2} +\frac{2\chi(X)+L(X)}{2f(X)} \ge \frac{1}{2} + \epsilon$$
is equivalent to 
$$f(X) \le \epsilon^{-1}\cdot (\chi(X) +L(X)/2),$$
where $f(X)$ denotes the number of 2-simplexes in $X$. 
By Lemma \ref{lm13} we have $\chi(X) =1-b_1(X) \le 1$ and using the assumption $L(X) \le 0$ we obtain
$f(X) \le \epsilon^{-1}.$
This implies the finiteness of the set of possible isomorphism types of $X$ and completes the proof. 
\end{proof}

We will also use a relative isoperimetric constant $I(X, X')\in \R$ for a pair consisting of a finite 2-complex 
$X$ and its subcomplex $X'\subset X$ which will be defined as the infimum of all ratios 
$
{|\gamma|}\cdot{A_X(\gamma)}^{-1}$
where $\gamma: S^1\to X'$ runs over simplicial loops in $X'$ which are null-homotopic in $X$.  
Clearly, $I(X,X')\ge I(X)$ and $I(X, X')=I(X)$ if $X'=X$.
Below is a useful strengthening of Lemma \ref{lneg}.

\begin{lemma}\label{lnegs} Given $\epsilon >0$, let $C'_\epsilon>0$ be the constant given by Lemma \ref{lneg}. Then for any finite pure tight connected 2-complex with $\tilde \mu(X) \ge 1/2+\epsilon$ and for a connected subcomplex $X'\subset X$ satisfying $L(X') \le 0$ one has $I(X,X') \ge C'_\epsilon$. 
\end{lemma}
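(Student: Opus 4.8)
The plan is to reduce the relative statement to the already-proved Lemma~\ref{lneg} by building, from a single offending loop, an auxiliary subcomplex of $X$ with non-positive $L$-value. First I would fix a simplicial loop $\gamma\colon S^1\to X'$ which is null-homotopic in $X$; by the remark on concatenation of loops we may assume $\gamma$ is prime, so that its minimal simplicial spanning disc $b\colon D\to X$ (of area $A_X(\gamma)$) restricts to a homeomorphism of $\partial D$ onto the image of $\gamma$. We must bound $|\gamma|\cdot A_X(\gamma)^{-1}$ below by $C'_\epsilon$. The auxiliary complex is
$$W\;=\;X'\cup b(D)\;\subseteq\;X .$$
Since the minimal disc $b(D)$ lies in $W$ while $W\subseteq X$, we get $A_W(\gamma)=A_X(\gamma)$, so it is enough to bound $|\gamma|\cdot A_W(\gamma)^{-1}$; and since every subcomplex of $W$ is a subcomplex of $X$, monotonicity of $\tilde\mu$ gives $\tilde\mu(W)\ge\tilde\mu(X)\ge 1/2+\epsilon$. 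After replacing $W$ by its pure part and then by the connected component containing $\gamma$ — operations that keep $\gamma$ together with its minimal disc, cannot decrease $\tilde\mu$, and cannot increase $L$ (deleting free edges only lowers $L$) — we may assume $W$ is pure and connected.

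The heart of the argument is the inequality $L(W)\le 0$. Split the edges of $W$ into those lying in $X'$ and the rest. An edge $e\subseteq X'$ can only acquire extra incident faces in $W$, so $\deg_W(e)\ge\deg_{X'}(e)$ and its contribution $2-\deg_W(e)$ to $L(W)$ does not exceed its contribution to $L(X')$. An edge $e\subseteq W$ with $e\not\subseteq X'$ is the image under $b$ of one or more interior edges of $D$; each interior edge of $D$ is incident to two distinct faces of $D$, and minimality of the filling forbids $b$ from folding two adjacent faces of $D$ together or from degenerating an edge, so $e$ is incident to at least two faces of $W$ and contributes $2-\deg_W(e)\le 0$. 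Summing over all edges and using $L(X')\le 0$ yields $L(W)\le L(X')\le 0$.

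Finally I would conclude as follows. If $W$ is tight, then $W$ is one of the finitely many complexes over which $C'_\epsilon$ is the minimum of $I$ in the proof of Lemma~\ref{lneg} (it is pure, connected, tight, has $\tilde\mu(W)\ge 1/2+\epsilon$ and $L(W)\le 0$), so $I(W)\ge C'_\epsilon$; since $\gamma$ is null-homotopic in $W$ this gives $|\gamma|\cdot A_X(\gamma)^{-1}=|\gamma|\cdot A_W(\gamma)^{-1}\ge I(W)\ge C'_\epsilon$, as required. The case in which $W$ fails to be tight is the main obstacle, and I would handle it by induction on the number of $2$-simplices of the ambient complex: if $W\subsetneq X$ it has strictly fewer faces, and one wants to re-run the argument with $W$ in place of $X$. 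The delicate point is that the inductive hypothesis (Lemma~\ref{lnegs}) requires a \emph{tight} ambient complex, so before recursing one must pass from $W$ to a tight subcomplex of minimal isoperimetric constant while keeping a connected subcomplex with non-positive $L$-value that still carries the relevant loop. Here Remark~\ref{rmrk12} is useful: whenever the offending loop has ratio strictly below the minimum of $I(Z)$ over proper subcomplexes $Z\subsetneq X$, its minimal filling is surjective, which forces $W=X$ and reduces us immediately to the tight case above; the remaining possibility, that the ratio is at least $I(Z)$ for some proper $Z$, feeds the induction. Keeping track of purity, connectedness and the condition $L\le 0$ through these reductions is the only real bookkeeping, and once it is in place the bound $I(X,X')\ge C'_\epsilon$ follows.
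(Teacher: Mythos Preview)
Your construction of $W=X'\cup b(D)$ and the verification that $L(W)\le 0$ are correct and coincide exactly with what the paper does (the paper calls this complex $Z$). The divergence is in how you conclude.

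You try to apply Lemma~\ref{lneg} to $W$ as stated, which forces you to worry about whether $W$ is tight, and then you sketch an induction that is not well-formed. In particular, the dichotomy you draw from Remark~\ref{rmrk12} does not feed a clean induction: in the branch ``ratio $\ge I(Z_0)$ for some proper $Z_0\subsetneq X$'' there is no reason $I(Z_0)\ge C'_\epsilon$, and passing from $W$ to a tight subcomplex need not preserve a connected subcomplex with $L\le 0$ carrying $\gamma$. So as written this case is a genuine gap.

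The paper avoids all of this by observing that, in the \emph{proof} of Lemma~\ref{lneg}, tightness is used only once---via Lemma~\ref{lm13}---to obtain $b_2=0$; the bound $f\le\epsilon^{-1}$ then follows from $\chi\le 1$, $L\le 0$ and $\mu\ge 1/2+\epsilon$. But here $W\subset X$ and $X$ itself is tight with $\tilde\mu(X)>1/2$, so Lemma~\ref{lm13} gives $b_2(X)=0$ and hence $b_2(W)=0$. Thus the finiteness argument of Lemma~\ref{lneg} applies directly to $W$ (or to any intermediate $X'\subset Y\subset X$ with $L(Y)\le 0$): there are only finitely many isomorphism types of such complexes, each has hyperbolic fundamental group by Corollary~\ref{thm6}, and the minimum of $I$ over them is the required lower bound. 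No tightness of $W$ and no induction are needed. Replace your final paragraph with this observation and the proof is complete.
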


\begin{proof} 
We show below that under the assumptions on $X$, $X'$ one has 
\begin{eqnarray} \label{y}
I(X,X') \ge \min_Y I(Y)
\end{eqnarray}
where $Y$ runs over all subcomplexes $X'\subset Y\subset X$ satisfying $L(Y)\le 0$. Clearly, $\tilde \mu(Y)\ge 1/2+\epsilon$ for any such $Y$. 
 In the proof of Lemma \ref{lneg} we showed that $b_2(X)=0$ which implies that $b_2(Y)=0$. Besides, without loss of generality we may assume that 
$Y$ is connected. The arguments of the proof of Lemma \ref{lneg} now apply (i.e. $Y$ may have finitely many isomorphism types, each having a hyperbolic fundamental group) and the result follows $I(X,X')\ge \min_Y I(Y)\ge C'_\epsilon$. 

Suppose that inequality (\ref{y}) is false, i.e. $I(X,X') < \min_Y I(Y)$, and consider a simplicial loop $\gamma:S^1\to X'$ satisfying $\gamma\sim 1$ in $X$ and 
$|\gamma|\cdot A_X(\gamma)^{-1} <\min_Y I(Y).$ Let $\psi: D^2\to X$ be a simplicial spanning disc of minimal area. 
It follows from the arguments of Ronan \cite{Ron}, that $\psi$ is non-degenerate in the following sense: for any 2-simplex $\sigma$ of $D^2$ the image $\psi(\sigma)$ is a 2-simplex 
and for two distinct 2-simplexes $\sigma_1, \sigma_2$ of $D^2$ with $\psi(\sigma_1)=\psi(\sigma_2)$ the intersection $\sigma_1\cap \sigma_2$ is either $\emptyset$ or a vertex of $D^2$. In other words, we exclude {\it foldings}, i.e. situations such that $\psi(\sigma_1)=\psi(\sigma_2)$ and $\sigma_1\cap \sigma_2$ is an edge.  
Consider $Z=X'\cup \psi(D^2)$. Note that $L(Z)\le 0$. Indeed, since $$L(Z)=\sum_e (2-\deg_Z(e)),$$ where $e$ runs over the edges of $Z$, we see that for $e\subset X'$,
$\deg_{X'}(e)\le \deg_Z(e)$ and for a newly created edges $e\subset \psi(D^2)$, clearly $\deg_Z(e)\ge 2$. Hence, $L(Z)\le L(X')\le 0$. 
On the other hand, $A_X(\gamma)=A_Z(\gamma)$ and hence $I(Z)\le |\gamma|\cdot A_X(\gamma)^{-1}<\min_Y I(Y)$, a contradiction.
\end{proof}

The main idea of the proof of Theorem \ref{uniform} in the general case is to find a planar complex (a \lq\lq singular  surface\rq\rq) \, $\Sigma$ with one boundary component $\partial_+\Sigma$ being the initial loop and such that \lq\lq the rest of the boundary\rq\rq\, $\partial_-\Sigma$ is a \lq\lq product of negative loops\rq\rq\,  (i.e. loops satisfying Lemma \ref{lnegs}). The essential part of the proof is in estimating the area (the number of 2-simplexes) of such $\Sigma$.

\begin{proof}[Proof of Theorem \ref{uniform}]
Consider a connected tight pure 2-complex $X$ satisfying
\begin{eqnarray}\label{ass}
\tilde \mu(X) \ge \frac{1}{2} +\epsilon
\end{eqnarray}
and a simplicial prime loop $\gamma: S^1 \to X$ such that the ratio 
$|\gamma|\cdot A_X(\gamma)^{-1}$ is less than the minimum of the numbers $I(X')$ for all proper subcomplexes $X'\subset X$. Consider a minimal spanning disc 
$b: D^2\to X$ for $\gamma=b|_{\partial D^2}$; here $D^2$ is a triangulated disc and $b$ is a simplicial map. As we showed in Remark \ref{rmrk12}, the map $b$ is surjective. 
As explained in the proof of Lemma \ref{lnegs}, due to arguments of Ronan \cite{Ron}, 
we may assume that $b$ has no foldings. 


For any integer $i\ge 1$ we denote by $X_i\subset X$ the pure subcomplex generated by all 2-simplexes $\sigma$ of $X$ such that the preimage $b^{-1}(\sigma)\subset D^2$ contains $\ge i$ two-dimensional simplexes. One has $X=X_1\supset X_2\supset X_3\supset \dots.$ Each $X_i$ may have several connected components and we will denote by $\Lambda$ the set labelling all the connected components of the disjoint union $\sqcup_{i\ge 1} X_i$. For $\lambda\in \Lambda$ the symbol $X_\lambda$ will denote the corresponding connected component of $\sqcup_{i\ge 1} X_i$
and the symbol 
$i=i(\lambda)\in \{1, 2, \dots\}$ will denote the index $i\ge 1$ such that $X_\lambda$ is a connected component of $X_i$, viewed as a subset of $\sqcup_{i\ge 1} X_i$. We endow $\Lambda$ with the following partial order: $\lambda_1\le \lambda_2$ iff $X_{\lambda_1}\supset X_{\lambda_2}$ (where $X_{\lambda_1}$ and $X_{\lambda_2}$ are viewed as subsets of $X$) and $i(\lambda_1)\le i(\lambda_2)$. 

Next we define the sets
$$\Lambda^-=\{\lambda\in \Lambda; L(X_\lambda)\le 0\}$$
and 
$$ \Lambda^+=\{\lambda\in \Lambda; \mbox{for any $\mu\in \Lambda$ with $\mu\le \lambda$, }\, L(X_\mu)> 0\}.$$
Finally we consider the following subcomplex  of the disk $D^2$:
\begin{eqnarray}\label{defsigma}
\Sigma' =D^2-\bigcup_{\lambda\in \Lambda^-}{\rm {Int}}(b^{-1}(X_\lambda))\end{eqnarray}
and we shall denote by $\Sigma$ the connected component of $\Sigma'$ containing the boundary circle $\partial D^2$. 
%

Recall that for a 2-complex $X$ the symbol $f(X)$ denotes the number of 2-simplexes in $X$. We have 
\begin{eqnarray}\label{one1}
f(D^2) = \sum_{\lambda\in \Lambda} f(X_\lambda),
\end{eqnarray}
and 
\begin{eqnarray}\label{two2}
f(\Sigma) \le f(\Sigma') \le \sum_{\lambda\in \Lambda^+} f(X_\lambda).
\end{eqnarray}
Formula (\ref{one1}) follows from the observation that any 2-simplex of $X=b(D^2)$ contributes to the RHS of (\ref{one1}) as many units as its multiplicity (the number of its preimages under $b$). Formula (\ref{two2}) follows from (\ref{one1}) and from the fact that for a 2-simplex 
$\sigma$ of $\Sigma'$ the image $b(\sigma)$ lies always in the complexes $X_\lambda$ with $L(X_\lambda)> 0$.  
\begin{figure}[h]\label{fig2}
\centering
\includegraphics[width=0.4\textwidth]{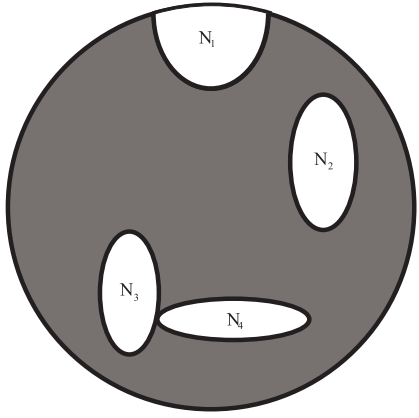}
\caption{The complex $\Sigma\subset D^2$.}
\end{figure}

\begin{lemma} \label{lpartial} One has the following inequality
\begin{eqnarray}\label{llb}
\sum_{\lambda\in \Lambda^+}L(X_\lambda)\le |\partial D^2|.
\end{eqnarray}
\end{lemma}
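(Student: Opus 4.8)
The plan is to estimate the quantity $L(X_\lambda)=\sum_e(2-\deg_{X_\lambda}(e))$ summed over $\lambda\in\Lambda^+$ by tracking how degrees of edges change when one passes from the disc $D^2$ (where the boundary is the loop $\gamma$ and every interior edge has some degree) down through the nested filtration $X=X_1\supset X_2\supset\cdots$. Recall that an edge $e$ of $D^2$ has $\deg_{D^2}(e)=2$ if $e$ is interior and $\deg_{D^2}(e)=1$ if $e\subset\partial D^2$, so that $L(D^2)=|\partial D^2|$. The key observation is that for each 2-simplex $\sigma$ of $X$, its preimage $b^{-1}(\sigma)$ consists of $\deg\text{-count}$ many triangles in $D^2$, and the count $i(\lambda)$ attached to a component $X_\lambda\subset X_i$ records how the multiplicities distribute; since $b$ has no foldings, for each edge $e$ of $X$ the triangles of $D^2$ mapping to triangles incident to $e$ arrange around the preimages of $e$ in a controlled way.

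The main step is a "layer-by-layer" accounting. First I would set up the decomposition of $D^2$ into the regions $b^{-1}(X_i)$ and express $L(D^2)=|\partial D^2|$ as a sum of contributions, one from each connected component $X_\lambda$ of the disjoint union $\sqcup_i X_i$. For a component $X_\lambda$ the natural local quantity is $L(X_\lambda)$ computed intrinsically, plus correction terms coming from the edges along which $b^{-1}(X_\lambda)$ meets the rest of $D^2$ (the "frontier" of the region). The no-folding condition guarantees that each such frontier edge of $X_\lambda$ has its degree in $X_\lambda$ strictly less than its degree upstairs, so these frontier contributions are nonpositive — this is where the sign works in our favour. Summing over all $\lambda\in\Lambda$ gives an identity of the shape $|\partial D^2|=\sum_{\lambda\in\Lambda}L(X_\lambda)+(\text{nonpositive frontier terms})$, hence $\sum_{\lambda\in\Lambda}L(X_\lambda)\le|\partial D^2|$. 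Finally, restricting the sum to $\Lambda^+$ only drops terms $L(X_\lambda)$ with $\lambda\notin\Lambda^+$: by definition of $\Lambda^+$, a component $\lambda\notin\Lambda^+$ either lies in $\Lambda^-$ (so $L(X_\lambda)\le0$, and dropping it only increases the left side) or sits above some $\mu\in\Lambda^-$, and I would need to check that those residual terms are also dealt with — the cleanest route is to observe that $\Lambda=\Lambda^+\sqcup(\text{components that are }\le\text{some }\mu\in\Lambda^-)$ and that the latter collection splits off with nonpositive total $L$, again because going one level down in the filtration across an $L\le0$ component only lowers edge degrees. Then $\sum_{\lambda\in\Lambda^+}L(X_\lambda)\le\sum_{\lambda\in\Lambda}L(X_\lambda)-(\text{nonpositive})\le|\partial D^2|$.

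The hard part will be making the frontier bookkeeping rigorous: precisely identifying, for a given component $X_\lambda$ of $X_i$, which edges of $X_\lambda$ are "new" frontier edges created by the passage from $X_{i-1}$ (or from $D^2$) and verifying that along every such edge the degree genuinely drops, using that $b$ is non-degenerate in the sense of Ronan (no foldings) so that distinct triangles of $D^2$ over a common triangle of $X$ meet only in vertices. One must be careful that a single edge of $X$ can have several preimages in $D^2$ and that the components $X_\lambda$ are defined in $X$, not in $D^2$, so the matching between "frontier of $b^{-1}(X_\lambda)$ in $D^2$" and "low-degree edges of $X_\lambda$" needs the no-folding hypothesis to be clean. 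Once this local degree-drop claim is established, the global inequality (\ref{llb}) is a telescoping sum over the finitely many layers $i\ge1$.
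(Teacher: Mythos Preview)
Your proposal has a genuine gap in the final step. You first aim to prove $\sum_{\lambda\in\Lambda} L(X_\lambda)\le|\partial D^2|$ (this part is plausible and in fact true), and then restrict the sum to $\Lambda^+$. But the restriction step fails in two ways.

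First, a direction problem: you argue that the dropped terms $\lambda\notin\Lambda^+$ have $L(X_\lambda)\le 0$ and that dropping them ``only increases the left side''. But increasing the left side is the wrong direction --- you would need $\sum_{\Lambda^+}L(X_\lambda)\le\sum_\Lambda L(X_\lambda)$, i.e.\ the dropped total must be \emph{nonnegative}, not nonpositive. Second, and more fundamentally, it is simply not true that $\lambda\notin\Lambda^+$ forces $L(X_\lambda)\le 0$. By definition $\lambda\notin\Lambda^+$ means only that some $\mu\le\lambda$ has $L(X_\mu)\le 0$; the quantity $L(X_\lambda)$ itself can be large and positive (take $X_\mu$ closed and $X_\lambda\subset X_\mu$ a small disc at a higher level). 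Your fallback claim, that the whole subtree above each minimal $\mu\in\Lambda^-$ has nonpositive total $L$, is not justified either, and the same sort of example defeats it. (Incidentally, $\Lambda\setminus\Lambda^+$ consists of $\lambda$ with $\lambda\ge\mu$ for some $\mu\in\Lambda^-$, not $\lambda\le\mu$ as you wrote.)

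The paper avoids this entirely by working \emph{edge by edge} rather than component by component. For an edge $e$ of $X$ set $\widetilde{\deg}_\lambda(e)=2-\deg_{X_\lambda}(e)$ if $e\subset X_\lambda$ and $0$ otherwise, so that $L(X_\lambda)=\sum_e\widetilde{\deg}_\lambda(e)$. Two structural facts then do all the work: (i) the set $\Lambda_e=\{\lambda:e\subset X_\lambda\}$ is \emph{linearly} ordered, and $\lambda\mapsto\widetilde{\deg}_\lambda(e)$ is non-decreasing on it; (ii) $\Lambda^+$ is downward-closed, so $\Lambda^+\cap\Lambda_e$ is an initial segment of $\Lambda_e$. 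Together these give, for each edge $e$, that whenever $\sum_{\Lambda^+}\widetilde{\deg}_\lambda(e)>0$ one has $\sum_{\Lambda^+}\widetilde{\deg}_\lambda(e)\le\sum_{\Lambda}\widetilde{\deg}_\lambda(e)$; a separate count comparing $\deg_{X_\lambda}(e)$ with the degrees of the preimage edges in $D^2$ then shows $\sum_{\Lambda}\widetilde{\deg}_\lambda(e)\le|b^{-1}(e)\cap\partial D^2|$. Summing over $e$ gives the lemma. The point is that the passage from $\Lambda$ to $\Lambda^+$ becomes controllable only \emph{after} fixing an edge, where the linear order and monotonicity are available; at the level of whole components the inequality you need does not hold.
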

\begin{proof} For an edge $e$ of $X$ and for $\lambda\in \Lambda$ we denote 
$$\widetilde \deg_\lambda(e) = \left\{ \begin{array}{lll}
2-\deg_{X_\lambda}(e), &\mbox{if}& e\subset X_\lambda,\\
0, &\mbox{if}& e\not \subset X_\lambda.\end{array}\right. 
$$
Note that $\widetilde \deg_\lambda(e) \le 1$ and $\widetilde \deg_\lambda(e) = 1$ iff $e$ belongs to a unique 2-simplex of 
$X_\lambda$, i.e. $e\subset \partial X_\lambda$. 
One has
\begin{eqnarray}
\sum_{\lambda\in \Lambda^+}L(X_\lambda) = \sum_{e\subset X}\sum_{\lambda\in \Lambda^+}\widetilde{\deg}_\lambda (e)&\le &
\sum_{e\subset X} \max\left\{\sum_{\Lambda^+} \widetilde \deg_\lambda(e), 0\right\}\nonumber \\ &\le &
\sum_{e\subset X} \max\left\{\sum_{\Lambda} \widetilde \deg_\lambda(e), 0\right\}\label{eight} \\ 
&\le & \sum_{e\subset X} |b^{-1}(e)\cap \partial D^2|   = |\partial D^2|.\label{nine}
\end{eqnarray}
Here $|b^{-1}(e)\cap \partial D^2|$ denotes the number of boundary edges which $b$ maps onto the edge $e$.
The inequality (\ref{eight}) follows from the fact that $\lambda\mapsto \widetilde \deg_\lambda(e)$ is a non-decreasing function on the poset 
$\Lambda_e=\{\lambda\in \Lambda; e\subset X_\lambda\}$. Note that $\Lambda_e$ is in fact linearly ordered. Indeed, suppose that 
$\lambda_1, \lambda_2\in \Lambda_e$ are such that $i_1=i(\lambda_1) \ge i_2=i(\lambda_2)$. Then $X_{\lambda_2}$, as a path-component of $X_{i_2}$, is contained in a path-component of $X_{i_1}$; the latter must coincide with $X_{\lambda_1}$ 
since the intersection $X_{\lambda_1}\cap X_{\lambda_2}\not=\emptyset$ is nonempty (since it contains the edge $e$). Thus we see that the assumption $i(\lambda_1)\ge i(\lambda_2)$ implies that 
$X_{\lambda_2}\subset X_{\lambda_1}$, i.e. $\lambda_1\ge \lambda_2$. 
Denoting $\Lambda_e\cap \Lambda^+=\Lambda_e^+$ we observe that if the sum
\begin{eqnarray}\label{pos}
\sum_{\Lambda^+}\widetilde\deg_\lambda(e) =\sum_{\Lambda^+_e}\widetilde\deg_\lambda(e)  >0
\end{eqnarray}
is positive then (due to the monotonicity) one has
$\widetilde \deg_{\lambda_0}(e)>0$
for the maximal element $\lambda_0$ of $\Lambda_e^+$ and hence $\widetilde \deg_\lambda(e)>0$ for all 
$\lambda \ge \lambda_0$, $\lambda\in \Lambda_e$. Thus, the positivity (\ref{pos}) implies $\sum_{\Lambda^+} \widetilde\deg_\lambda(e) < \sum_{\Lambda} \widetilde\deg_\lambda(e)$ giving (\ref{eight}). 

To prove the inequality (\ref{nine}) we observe that for an edge $e$ of $X$ one has
\begin{eqnarray}\label{11}
\sum_{\lambda\in \Lambda}\deg_{X_\lambda}(e) = \sum_{e'\in b^{-1}(e)}\deg_{D^2}(e') = 2|b^{-1}(e)|-|b^{-1}(e)\cap \partial D^2|
\end{eqnarray}
and hence 
\begin{eqnarray}
\sum_{\lambda\in \Lambda}\widetilde\deg_\lambda(e) &=& 2\cdot |\Lambda^e| - \sum_{\lambda\in \Lambda}\deg_\lambda(e)\nonumber  \\ &\le &
2\cdot |b^{-1}(e)| - \sum_{\lambda\in \Lambda}\deg_\lambda(e) \label{12}\\ & =& |b^{-1}(e)\cap \partial D^2| \, \, \, \mbox{(using (\ref{11}))}\nonumber.
\end{eqnarray}
Inequality (\ref{12}) is based on
\begin{eqnarray}\label{13}
|\Lambda_e| \, = \, \max_{\sigma\supset e} |b^{-1}(\sigma)| \, \le \, |b^{-1}(e)|,\end{eqnarray}
where $\sigma$ runs over all 2-simplexes of $X$ containing $e$. The equality which appears in (\ref{13}) is a consequence of the set $\Lambda_e$ being linearly ordered, see above.
This completes the proof. 
\end{proof}

%

Now we continue with the proof of Theorem \ref{uniform}.
Consider a tight pure 2-complex $X$ satisfying (\ref{ass}) and a simplicial loop $\gamma: S^1\to X$ as above. 
We will use the notation introduced earlier. The complex $\Sigma$ is a connected subcomplex of the disk $D^2$; it contains the boundary circle $\partial D^2$ 
which we will denote also by 
$\partial_+\Sigma$. The closure of the complement of $\Sigma$,
$$N=\overline{D^2-\Sigma}\subset D^2$$ is a pure 2-complex. Let $N=\cup_{j\in J}N_j$ be the strongly connected components of $N$.
Each $N_j$ is PL-homeomorphic to a disc and we define 
$$\partial_-\Sigma=\cup_{j\in J}\partial N_j,$$ the union of the circles $\partial N_j$ which are the boundaries of the strongly connected components of $N$. 
It may happen that $\partial_+\Sigma$ and $\partial_-\Sigma$ have nonempty intersection. Also, the circles forming $\partial_-\Sigma$ may not be disjoint. 

We claim that for any $j\in J$ there exists $\lambda\in \Lambda^-$ such that $b(\partial N_j)\subset X_\lambda$. 
Indeed, let $\lambda_1, \dots, \lambda_r\in \Lambda^-$ be the minimal elements of $\Lambda^-$ with respect to the partial order introduced earlier. The complexes
$X_{\lambda_1}, \dots, X_{\lambda_r}$ are connected and pairwise disjoint and for any $\lambda\in \Lambda^-$ the complex $X_\lambda$ is a subcomplex 
of one of the 
sets $X_{\lambda_i}$, where $i=1, \dots, r$. From our definition (\ref{defsigma}) it follows that the image of the circle $b(\partial N_j)$ is contained in the union 
$\cup_{i=1}^r X_{\lambda_i}$ but since $b(\partial N_j)$ is connected it must lie in one of the sets $X_{\lambda_i}$.

We may apply Lemma \ref{lnegs} to each of the circles $\partial N_j$. We obtain that each of the circles $\partial N_j$ admits a spanning discs of area
$\le K_\epsilon |\partial N_j|$, where $K_\epsilon= C'^{-1}_\epsilon$ is the inverse of the constant given by Lemma \ref{lnegs}. 
Using the minimality of the disc $D^2$ we obtain that the circles $\partial N$ bound in $D^2$ several discs with 
the total area 
$A \le K_\epsilon\cdot |\partial_-\Sigma|$ (here we use Lemma \ref{lneg}); 
otherwise one could reduce the area of the initial disc.


For $\lambda\in \Lambda^+$ one has $L(X_\lambda)\ge 1$ and $\chi(X_\lambda)\le 1$ (since $b_2(X_\lambda)=0$), hence we have 
$$3L(X_\lambda) \ge 2\chi(X_\lambda) +L(X_\lambda) \ge 2\epsilon f(X_\lambda)$$
where on the last stage we used the inequality $\mu(X_\lambda) \ge 1/2+\epsilon$. Summing up we get
$$f(\Sigma)\le  \sum_{\lambda\in \Lambda^+} f(X_{\lambda}) \le \frac{3}{2\epsilon}\sum_{\lambda\in \Lambda^+}L(X_\lambda) \le
 \frac{3}{2\epsilon}
 |\partial D^2|.$$
The rightmost inequality  is given by Lemma \ref{lpartial}. 

Next we observe, that 
\begin{eqnarray}
|\partial_-\Sigma| \le 2f(\Sigma) +|\partial_+\Sigma|.
\end{eqnarray}
To explain this inequality we note that each edge of $\partial_-\Sigma$ which does not belong to $\partial_+\Sigma$ is incident to a face of $\Sigma$ and a face of $\Sigma$ can have at most two edges lying on $\partial_-\Sigma$. 
Therefore, we obtain
\begin{eqnarray*}
f(D^2) &\le& f(\Sigma)+A \, \le \, \frac{3}{2\epsilon} |\gamma| + K_\epsilon\cdot 2\cdot f(\Sigma) + K_\epsilon |\gamma| \\ 
&\le &
\left(\frac{3}{2\epsilon}(1+2K_\epsilon) +K_\epsilon\right)\cdot |\gamma|,
\end{eqnarray*}
implying 
\begin{eqnarray}
I(X) \ge \frac{2\epsilon}{3+6K_\epsilon+2\epsilon K_\epsilon}.
\end{eqnarray}
This completes the proof of Theorem \ref{uniform}.
\end{proof}


\bibliographystyle{amsalpha}

\begin{thebibliography}{99}


\bibitem{ALLM} L. \ Aronshtam, N. \ Linial, T. \ {\L}uczak, R. \ Meshulam, \textit{Collapsibility and vanishing of top homology in random simplicial complexes}, 
Discrete Comput. Geom.  49  (2013),  no. 2, 317–334.


\bibitem{BHK} E.\ Babson, C.\ Hoffman, M.\ Kahle, 
{\it The fundamental group of random $2$-complexes}, J. Amer. Math. Soc. 24 (2011), 1-28. See also the latest archive version arXiv:0711.2704 revised on 20.09.2012. 

\bibitem{B} W.\ A.\ Bogley, \textit{J.H.C. Whitehead's asphericity question}, in: "Two-dimensional Homotopy and Combinatorial Group Theory", eds. 
C. Hog-Angeloni, A. Sieradski and W. Metzler, LMS Lecture Notes 197, Cambridge Univ Press (1993), 309-334. 

\bibitem{CC} W.H. Cockcroft, \textit{On two-dimensional aspherical complexes}, Proc. London Math. Soc. {\bf {4}}(1954), 375-384. 

\bibitem{CCFK} D. Cohen, A.E. Costa, M. Farber, T. Kappeler, \textit{Topology of random 2-complexes,} 
Journal of Discrete and Computational Geometry, {\bf 47}(2012), 117-149.

\bibitem{CF} A.E. Costa, M. Farber, \textit{The asphericity of random 2-dimensional complexes}, to appear in \textit{Random Structures and Algorithms,}
 preprint 
arXiv:1211.3653v1

\bibitem{ER} P.\ Erd\H{o}s, A.\ R\'enyi, {\it On the evolution of 
random graphs}, Publ.\ Math.\ Inst.\ Hungar.\ Acad.\ Sci.\ {\bf 5}
(1960), 17--61.

\bibitem{Gromov83} M. Gromov, \textit{Filling Riemannian manifolds}, J. Differential Geometry, 18(1983), 1--147. 

\bibitem{Gromov87} M. Gromov, \textit{Hyperbolic groups}, in Essays in group theory, ed.
S. M. Gersten, Springer (1987), 75-265.

\bibitem{JLR} S. Janson, T. {\L}uczak, A. Ruci\'nski, \textit{Random graphs}, Wiley-Intersci. Ser. Discrete Math. Optim., Wiley-Interscience, New York, 2000.

\bibitem{KRS} M.G. Katz, Y.B. Rudyak, S. Sabourau, \textit{Systoles of 2-complexes, Reeb graphs, and Grushko decomposition}, IMRN 2006, pp.1--30. 

\bibitem{Ko} D. Kozlov, \textit{The threshold function for vanishing of the~top homology group of random $d$-complexes},
Proc. Amer. Math. Soc. 138 (2010), 4517-4527.  

\bibitem{LM} N.\ Linial, R.\ Meshulam, {\it Homological connectivity
  of random $2$-complexes}, Combinatorica {\bf 26} (2006),  475--487.

\bibitem{MW} R.\ Meshulam, N.\ Wallach, {\it Homological
  connectivity of random $k$-complexes}, Random Structures \& Algorithms 
  \textbf{34} (2009), 408--417. 


\bibitem{Pap} P. \ Papasoglu, {\it Cheeger constants of surfaces and isoperimetric inequalities}, Transactions of the AMS {\bf {361}}(2009), pp. 5139-5162. 


\bibitem{Ron} M.A. Ronan, \textit{On the second homotopy group of certain simplicial complexes and some combinatorial applications}, Quart. J. Math. {\bf {32}}(1981), 225 - 233. 


\bibitem{R} S. Rosenbrock, \textit{The Whitehead Conjecture - an overview}, Siberian Electronoc Mathematical Reports, {\bf 4}(2007), 440-449. 



\end{thebibliography}

\vskip 2cm

A.E. Costa, Warwick Mathematics Institute, Warwick University, Coventry CV4 7AL, UK

email: armindocosta@gmail.com

\vskip 1cm
M. Farber, Warwick Mathematics Institute, Warwick University, Coventry CV4 7AL, UK 

email: michaelsfarber@gmail.com


\end{document}